\numberwithin{equation}{section}
\providecommand{\U}[1]{\protect\rule{.1in}{.1in}}
\newtheorem{theorem}{Theorem}[section]
\newtheorem{definition}[theorem]{Definition}
\newtheorem{example}[theorem]{Example}
\newtheorem{proposition}[theorem]{Proposition}
\newcommand{\R}{\mathbb{R}}
\newcommand{\e}{\varepsilon}
\newcommand{\changed}{black}
\newcommand{\moreedits}{black}
\title{Eikonal depth: an optimal control approach to statistical depths}
\author{Martin Molina-Fructuoso and Ryan Murray}
\date{\today}
\begin{document}
\maketitle

\begin{abstract}
	Statistical depths provide a fundamental generalization of quantiles and medians to data in higher dimensions. This paper proposes a new type of globally defined statistical depth, based upon control theory and eikonal equations, which measures the smallest amount of probability density that has to be passed through in a path to points outside the support of the distribution: for example spatial infinity. This depth is easy to interpret and compute, {\color{\changed}expressively captures multi-modal behavior}, and extends naturally to data that is non-Euclidean. We prove various properties of this depth, and provide discussion of computational considerations. In particular, we demonstrate that this notion of depth is robust under an {\color{\changed}aproximate isometrically} constrained adversarial model, a property which is not enjoyed by the Tukey depth. Finally we give some {\color{\changed}illustrative} examples {\color{\changed}in the context of two-dimensional mixture models and MNIST}. 
\end{abstract}

\section{Introduction}

In univariate statistics, quantiles, depths, and medians serve as an important cornerstone for constructing robust statistical orderings. Generalizing these notions to multivariate statistics has been the subject of significant study. Indeed, constructing multivariate notions of depth which are simultaneously robust, computable, interpretable, and statistically {\color{\changed}useful} remains challenging.

The most classical notion of multivariate depth is the {\color{\changed}\emph{Tukey}} or {\color{\changed}\emph{Halfspace depth}}, which, given a {\color{\changed}distribution $F$} over $\R^d$, is defined by
\[
{\color{\changed}  d_T(x) := \inf_{a \in \mathbb{R}^d, \,a \neq 0} F( \{ y \in \mathbb{R}^d\, | \, (x-y)\cdot a \geq 0\}).}
\]
This notion of depth, which is a natural extension of the definition in $\R$, enjoys many {\color{\changed}useful} properties. The (super) level sets are convex and nested, are invariant under affine transformations, and are robust under specific classes of changes to {\color{\changed}$F$}. Indeed the Tukey depth is considered so fundamental that {\color{\changed}its properties have almost directly been used to define what {\color{\changed}it means to be} a statistical depth \cite{zuo2000general}}.

However, the halfspace depth has some inconvenient properties. {\color{\changed}It is challenging to compute in higher dimension}. It also is difficult to extend to data which is not inherently Euclidean, and gives counter-intuitive results for distributions that are multimodal or have non-convex level sets. Finally, the notion of robustness used in studying the Tukey depth is very classical, and under other notions of adversarial robustness the Tukey depth actually turns out to be non-robust (see Section \ref{sec:robustness} for details).

The purpose of this work is to propose a new notion of depth, which we call the eikonal depth, designed to address some of these shortcomings. This depth is based upon a different extension of the one-dimensional quantile depth to higher dimension, and has a direct, interpretable formulation based upon optimal control. Like the case of the halfspace depth, our definition is global and geometric in nature. However, unlike the case of the halfspace depth, this depth is much more amenable to computation, is robust under a natural class of adversarial perturbations to inputs, and is directly extensible to non-Euclidean and discrete settings. Indeed, we argue that this notion of depth is natively understood in terms of metric geometry, as opposed to convex geometry: see Section \ref{sec:robustness}. Although there are many notions of depth \textcolor{\changed}{(some of which we describe in Section \ref{sec:literature})}, we believe that the eikonal depth provides a unique combination of features not {\color{\changed}attainable by} other {\color{\changed}curently available} notions of depth.

The remainder of the work is organized as follows: in Section \ref{sec:eikonaldef} we give the definition of eikonal depth in th Euclidean setting, and describe the connection with the classical one-dimensional depth. In Section \ref{sec:literature} we provide a review of related literature. {\color{\changed}In Section \ref{sec:mainproperties} we describe some of the main properties of the eikonal depth in Euclidean space and in Section \ref{sec:robustness} we identify a new type of adversarial robustness inherent to our definition. Section \ref{sec:extensionNonEucli} describes natural extensions of the definition to a wide class of metric spaces such as manifolds with boundary and graphs.} Finally, Section \ref{sec:numerics} describes numerical approaches for this depth, and gives a number of illustrative examples.

\subsection{Proposed model: an eikonal depth}
\label{sec:eikonaldef}
We will start by defining the eikonal depth in the population setting {\color{\changed}and will provide an analogous approach for empirical measures in Section \ref{sec:extensionNonEucli}}. {\color{\changed}As such, we will a assume that $F$ has an associated continuous density function $\rho$.} For now, we will focus our attention on two canonical cases: distributions with probability densities whose support is all of $\mathbb{R}^d$ and distributions whose support is closed and bounded. Considering densities with support on all of $\mathbb{R}^d$ allows us to include standard probability distributions such as mixtures of Gaussians, depicted in Figure \ref{fig:figure1.2}. An example of a distribution with a density with compact support is a uniform distribution on a square, as in Figure \ref{fig:figure1.1}.  
%\todo{Do we introduce $\phi$ here or later? And what do we call it? Maybe stating the assumptions in a math display will make them more transparent.}
\begin{definition}
	\label{def:eikonalpaths}
	
	Let $F$ be a probability distribution with density $\rho$ and let $\phi$ be a continuous and non decreasing function $\mathbb{R}^+ \rightarrow \mathbb{R}^+$ such that $\phi(0)=0$.
	
	 We define $\mathcal{U}_x$ to be the set of continuous curves of locally finite length from $[0,\infty) \to \R^d$ if $\mbox{supp} \, (\rho) = \mathbb{R}^d$ (alternatively, from $[0,T]  \rightarrow \mathbb{R}^d$ if $\mbox{supp} \, (\rho) = \overline{\Omega}$ for some open and bounded $\Omega \subset \mathbb{R}^d$) so that if $\gamma \in \mathcal{U}_x$ then $\gamma(0) = x$ and 
		\begin{equation}
			\begin{array}{llcl}
				&	\lim_{t \to \infty} \gamma(t) = \infty & \mbox{ if } & \mbox{supp} \, (\rho) = \mathbb{R}^d, \\
				\mbox{alternatively, }&	\gamma(T) \in \partial \Omega & \mbox{ if } & \mbox{supp} \, (\rho) = \overline{\Omega} \mbox{.}
				%  	 $\lim_{t \to \infty} x(t) = \infty$ \quad if \quad $\mbox{supp} \, (\rho) = \mathbb{R}^d$
				%  	$x(T) \in \partial \Omega$ \quad if \quad $\mbox{supp} \, (\rho) = \overline{\Omega}$ \quad for some open and bounded $\Omega \subset \mathbb{R}^d$.
			\end{array}
		\end{equation}
		
		We then define the $\phi$-eikonal depth of a point $x$ in $F$, denoted by 
		$D_{eik}(x,F)$, by the minimization problem
			\begin{equation}
%				D_{eik}(x,F):=\inf_{\gamma \in \mathcal{U}_x} \int_{\gamma} \phi(\rho ) ds =\inf_{u \in \mathcal{U}_x} \int_{0}^{\infty} \phi(\rho (x(t))) |\dot{x}(t)| dt =: \inf_{u \in \mathcal{U}_x} J(u)
				D_{eik}(x,F):=\inf_{\gamma \in \mathcal{U}_x} \int_{\gamma} \phi(\rho ) ds  =: \inf_{\gamma \in \mathcal{U}_x} J(\gamma),
				\label{eq:energyEikonal}
			\end{equation} 
			where $\int_{\gamma} \phi(\rho) ds$ stands for the integral of the function $\phi(\rho)$ along the curve $\gamma$. In particular, in the case where the support of $\rho$ is all of $\R^d$ then this integral takes the form $\int_\gamma \phi(\rho)\,ds = \int_0^\infty \phi(\rho(\gamma(t)))|\dot \gamma(t)| \,dt$, where $\dot \gamma$ may either be a classical derivative (when available), or may be interpreted as a measure, which always is possible for curves of locally finite length.

			For the sake of clarity, when the role of $\phi$ is not important, we refer to $D_{eik}(x,F)$ simply as the eikonal depth. When $\phi(s) = s$ we call $D_{eik}(x,F)$ the \emph{unnormalized eikonal depth}. When $\phi(s) = s^{\frac{1}{d}}$ we call $D_{eik}(x,F)$ the \emph{normalized eikonal depth}.  
			
%			\textcolor{\tochange}{Do we want $\phi(0) = 0$? $\phi$ continuous? What if $\Omega$ is disconnected, or has holes? Also, perhaps the $u$ notation is not so helpful here? Do we mean $\dot x = u$?}

		\end{definition}
	Definition \ref{def:eikonalpaths} has mathematical meaning even if $\phi(0) \neq 0$: indeed if $\phi \equiv 1$ then this simply becomes the problem of finding the shortest path to the boundary of the domain. In our context we restrict our attention to the case where $\phi(0) = 0$ so that we may talk about distributions with unbounded support. 	
	
		In the context of optimal control theory, we can interpret the $\phi$-eikonal depth as the minimum amount of time that a particle requires to escape to infinity (or to reach the boundary of the support, depending on the support of the density) in a field \textcolor{\changed}{with velocity equal to $\frac{1}{\phi(\rho(x))}$ or alternatively as the minimal amount of $\phi$-weighted density along a path to infinity}.
		We note that we could actually replace the infimum with a minimum in the previous definition: if $\{\gamma_n\} \subset \mathcal{U}_x$ is a sequence that achives the infimum, i.e. $\lim_{n \rightarrow \infty} J(\gamma_n)= \inf_{\gamma \in \mathcal{U}_x} J(u)$, we can take a weak limit and use the continuity of $\phi,\rho$ to obtain a $\gamma$ with $J(\gamma)= \inf_{\gamma \in \mathcal{U}_x} J(\gamma)$.
		
		%Consider the control system 
		%	\begin{equation}
		%	\dot{x}(t)= u
		%	\end{equation}
		%	where $t \in [0,\infty]$ and $u:[0,\infty)] \rightarrow U \subset \mathbb{R}^d$ is an admissible control, with initial and terminal constraints $x(0)=x_0, \lim_{t \rightarrow \infty} x(t)= \infty$. If $\mathcal{U}$ stands for the set of admissible controls, then we can define $D_{eik}(x,F)$ by the minimization problem
		%	\begin{equation}
		%	D_{eik}(x,F):=\inf_{u \in \mathcal{U}} \int_{0}^{\infty} \rho (x(t)) |u| dt =\inf_{u \in \mathcal{U}} \int_{0}^{\infty} \rho (x(t)) |\dot{x}(t)| dt
		%	\end{equation}
		%	In other words, the eikonal depth can be defined as the minimum amount of time that a particle requires to escape to infinity in a field with velocity equal to $\frac{1}{\rho(x)}$.
		%
		
		We can also see from Definition \ref{def:eikonalpaths} that $D_{eik}$ is continuous if $\rho$ and $\phi$ are continuous. In many settings, the optimal cost in this type of control problem can be characterized as a solution to a Hamilton-Jacobi equation. For example, in the case when $\rho$ has support in all of $\mathbb{R}^d$ or has compact support, we may use the following equivalent definition of the $\phi$-eikonal depth:

		\begin{definition}
			\label{def:eikonalPop}
			Let $F$ be a probability distribution in $\mathbb{R}^d$ with density $\rho$ with $\mbox{supp} \, (\rho) = \mathbb{R}^d$ (alternatively with $\mbox{supp} \, (\rho)=\overline{\Omega}$ for an open and bounded $\Omega \subset \mathbb{R}^d$), and let $\phi$ be a non decreasing continuous function $\R^+ \to \R^+$.
			
			We define the $\phi$-eikonal depth $D_{eik}(x,F)$ as the solution, in the viscosity sense (see Definition \ref{def:viscositysolution}), of the equation
			\begin{equation}
				\label{eq:eikonal}
				|\nabla u (x)|= \phi(\rho (x))
			\end{equation}
			with boundary conditions $\lim_{\|x\| \rightarrow \infty} u(x)=0$ (alternatively $u=0$ on $\partial \Omega$). 
		\end{definition}

		\begin{figure}
			\centering
			\begin{subfigure}[b]{0.49\textwidth}
				\centering
				\includegraphics[width=\textwidth]{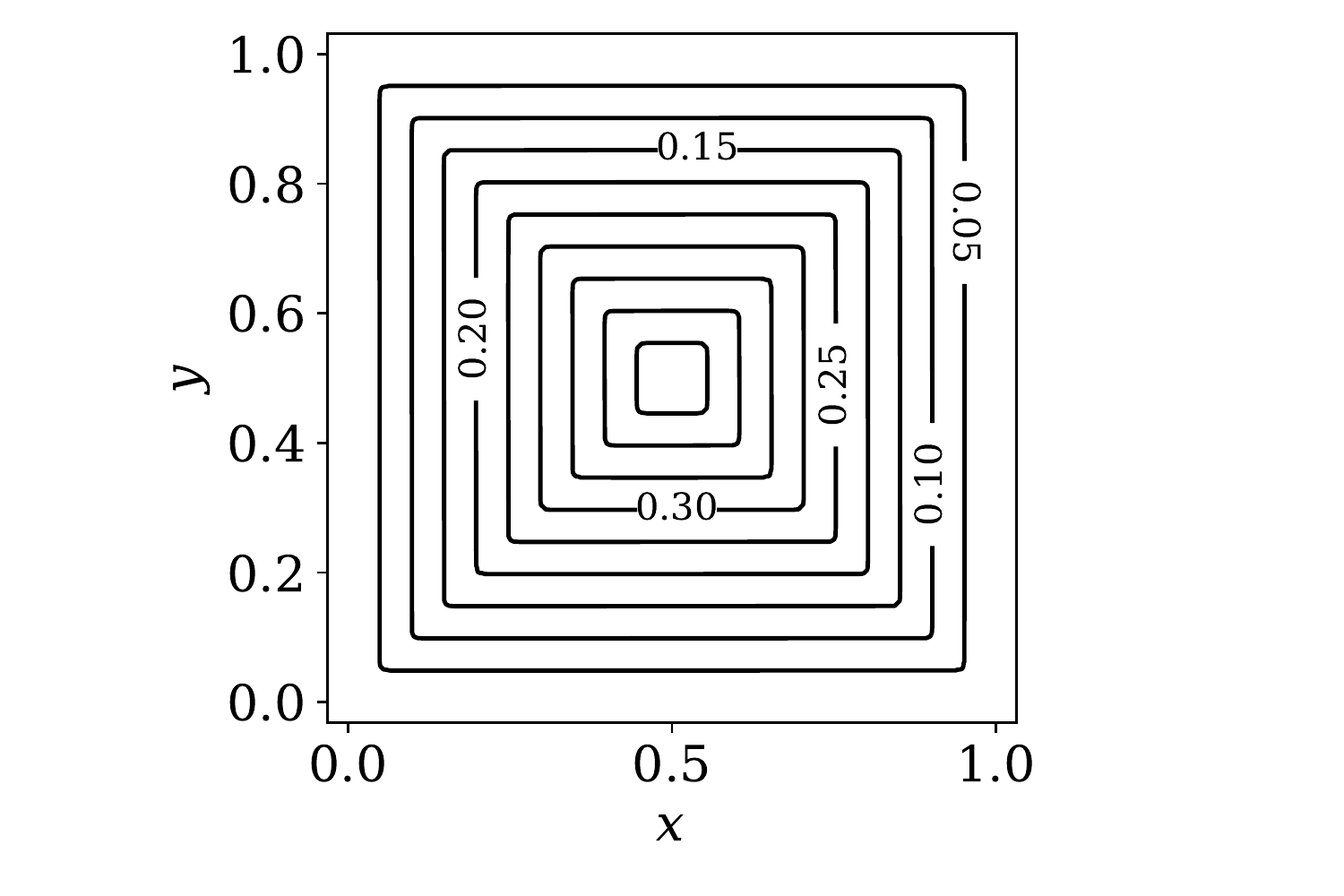}
				\caption{}
				\label{fig:figure1.1}
			\end{subfigure}
			\hfill
			\begin{subfigure}[b]{0.49\textwidth}
				\centering
				\includegraphics[width=\textwidth]{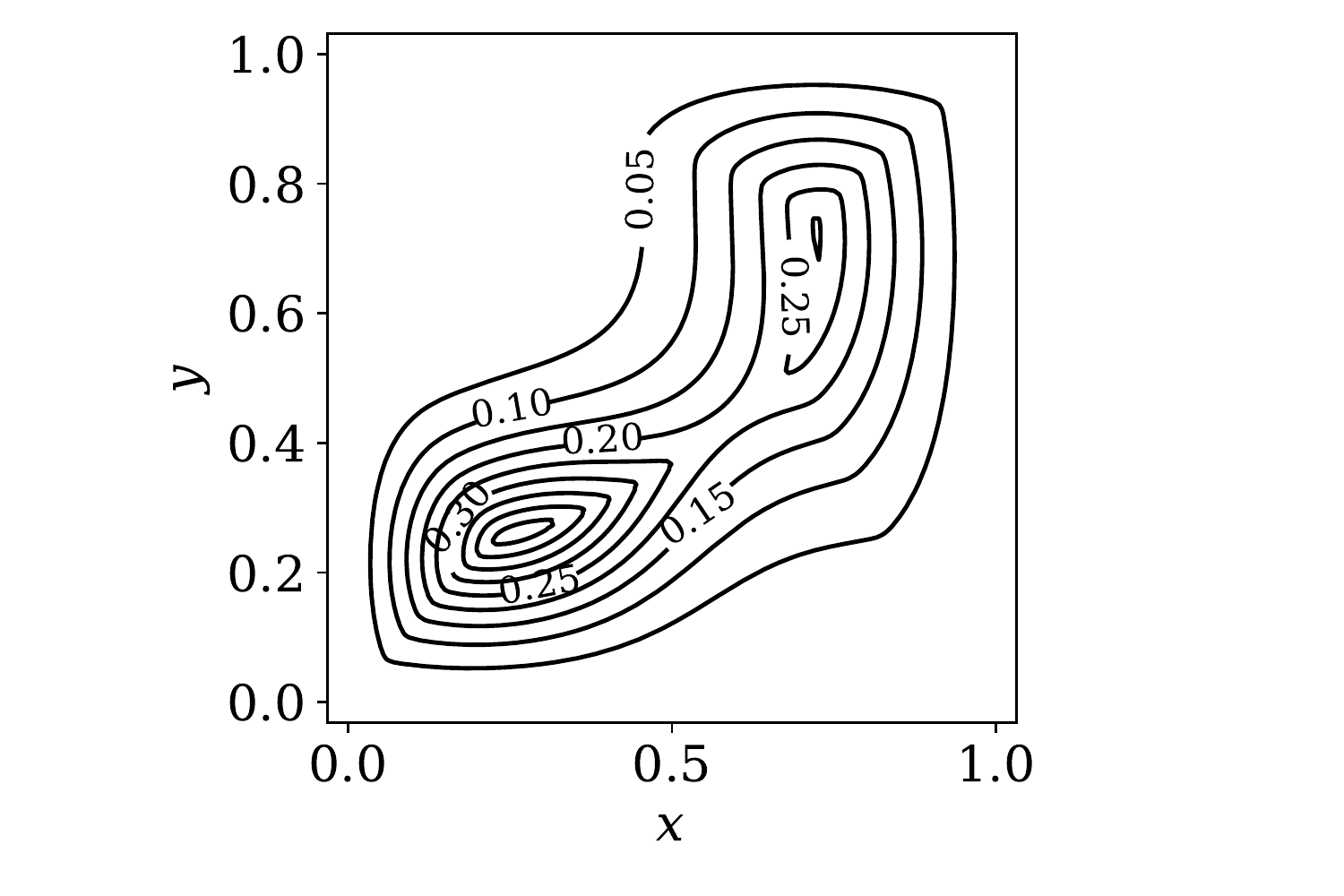}
				\caption{}
				\label{fig:figure1.2}
			\end{subfigure}
			\caption{\textcolor{\changed}{The figure shows, for $\phi(s)=s$, the level sets of the eikonal depth of a distribution whose densities have compact support (uniform density on a square in Figure \ref{fig:figure1.1}) and support on all of $\mathbb{R}^d$ (a mixture of Gaussians in Figure \ref{fig:figure1.2}). See example \ref{ex:Gaussianmixture} for more details on mixtures of Gaussian distributions.}
			}
		\end{figure}
		
		In the previous definition we refer to the ``viscosity solution'' of Equation \eqref{eq:eikonal}. The precise definition of viscosity solutions is somewhat technical \textcolor{\changed}{but a simplified version is given in Definition \ref{def:viscositysolution}}; one reference text is \cite{bardi2008optimal}, and an introduction in the context of Tukey depths is given in \cite{ryanmartin}. From a heuristic standpoint, the concept of viscosity solutions \textcolor{\changed}{provides a generalized notion of solving \eqref{eq:eikonal} at points where $u$ is not differentiable that is consistent with the control problem. Such a generalization is essential as \eqref{eq:eikonal} will usually not admit everywhere differentiable solutions}. From an algorithmic standpoint, viscosity solutions can be approximated by adding a small viscosity term ($\varepsilon \Delta D_{eik}$) to the differential equation.
		
		For the sake of clarity we give the definitions of viscosity solutions of a differential equation only in the context of Equation \ref{eq:eikonal}. More general definitions for nonlinear differential equations can be found in \cite{crandall1983viscosity}. We denote the set of continuous functions in $\Omega$ by $\mathcal{C}(\Omega)$ .
		\begin{definition}
			\label{def:viscositysolution}
			A function $u \in \mathcal{C}(\Omega)$ is a viscosity subsolution of Equation \eqref{eq:eikonal} if for every $x \in \Omega$ and every $\varphi \in \mathcal{C}^{\infty}(\Omega)$ such that $u-\varphi$ has a local maximum at $x$ in $\Omega$ we have that
			\begin{equation}
				|\nabla u (x)| \leq \phi(\rho(x)).
			\end{equation}
			
			In a similar manner, we say that a function $u \in \mathcal{C}(\Omega)$ is a viscosity supersolution of \eqref{eq:eikonal} if for every $x \in \Omega$ and every $\varphi \in \mathcal{C}^{\infty}$ such that $u-\varphi$ has a local minimum at $x$ in $\Omega$ we have that
			\begin{equation}
				|\nabla u(x)| \geq \phi(\rho(x)).
			\end{equation}
			Finally, if $u$ is both a subsolution and a supersolution of \eqref{eq:eikonal} then we call it a viscosity solution of \eqref{eq:eikonal}.
		\end{definition}

		{\color{\changed} Existence and uniqueness results for the Eikonal equation in a bounded domain $\Omega \subset \mathbb{R}^d$ are well established \cite{bardi2008optimal} and the following comparison principle is a standard result from the theory of viscosity solutions:
			\begin{proposition}
				\label{prop:boundeddomaincomp}
				Let $v$ be a supersolution of $|D u(x)|=\rho(x)$ with $u=0$ on $\partial \Omega$. If $u$ is a subsolution, then $u \leq v$.
			\end{proposition}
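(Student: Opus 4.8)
The plan is to prove the comparison principle by the classical \emph{doubling of variables} technique from viscosity solution theory, together with the rescaling trick that is needed because the eikonal equation carries no zeroth-order term. First I would reduce to a strict subsolution: since $|Du| = \rho$ does not depend on $u$ itself, the plain comparison argument does not close, and the standard remedy is to replace $u$ by $u^\mu := (1-\mu)u$ for $\mu \in (0,1)$, which is a viscosity subsolution of $|Du^\mu| \le (1-\mu)\rho$, prove $u^\mu \le v$, and then let $\mu \downarrow 0$; note $u^\mu \le v$ on $\partial\Omega$ since both vanish there. I would then argue by contradiction, assuming $M := \max_{\overline\Omega}(u^\mu - v) > 0$, which holds for all small $\mu$ as soon as $\max_{\overline\Omega}(u - v) > 0$.

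For $\varepsilon > 0$ set
\[
\Phi_\varepsilon(x,y) := u^\mu(x) - v(y) - \frac{1}{2\varepsilon}|x-y|^2,
\]
and let $(x_\varepsilon, y_\varepsilon)$ be a maximizer of $\Phi_\varepsilon$ over $\overline\Omega \times \overline\Omega$, which exists by continuity and compactness. The standard penalization lemma then gives $\frac{1}{2\varepsilon}|x_\varepsilon - y_\varepsilon|^2 \to 0$ and $\Phi_\varepsilon(x_\varepsilon,y_\varepsilon) \to M$ as $\varepsilon \to 0$, and, along a subsequence, $x_\varepsilon, y_\varepsilon \to \hat x$ with $(u^\mu - v)(\hat x) = M$. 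Because $M > 0$ while $u^\mu - v \le 0$ on $\partial\Omega$, the point $\hat x$ lies in the open set $\Omega$, hence $x_\varepsilon, y_\varepsilon \in \Omega$ for all small $\varepsilon$.

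Now I would invoke the viscosity inequalities with the test functions produced by the penalization, using the convention in Definition~\ref{def:viscositysolution} whereby the test function's gradient plays the role of $\nabla u$. The map $x \mapsto u^\mu(x) - \big( v(y_\varepsilon) + \frac{1}{2\varepsilon}|x - y_\varepsilon|^2 \big)$ has a local maximum at $x_\varepsilon \in \Omega$, so the subsolution property gives $\left| \frac{x_\varepsilon - y_\varepsilon}{\varepsilon} \right| \le (1-\mu)\rho(x_\varepsilon)$; the map $y \mapsto v(y) - \big( u^\mu(x_\varepsilon) - \frac{1}{2\varepsilon}|x_\varepsilon - y|^2 \big)$ has a local minimum at $y_\varepsilon \in \Omega$, so the supersolution property gives $\left| \frac{x_\varepsilon - y_\varepsilon}{\varepsilon} \right| \ge \rho(y_\varepsilon)$. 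Combining, $\rho(y_\varepsilon) \le (1-\mu)\rho(x_\varepsilon)$; sending $\varepsilon \to 0$ and using continuity of $\rho$ yields $\rho(\hat x) \le (1-\mu)\rho(\hat x)$, i.e.\ $\mu\,\rho(\hat x) \le 0$. Since $\mu > 0$ and $\hat x$ is an interior point where $\rho(\hat x) > 0$, this is a contradiction, so $u^\mu \le v$ on $\overline\Omega$; letting $\mu \downarrow 0$ gives $u \le v$.

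I expect the main obstacle to be exactly this last positivity input: without the rescaling, the doubling argument delivers only $\rho(y_\varepsilon) \le \rho(x_\varepsilon)$, which is not a contradiction, so the strict-subsolution step is essential, and it requires $\rho$ to be positive at the offending point $\hat x$ (more robustly, bounded below on compact subsets of $\Omega$) — this is where $\operatorname{supp}\rho = \overline\Omega$ together with $\hat x \in \Omega$ is used. A secondary technical point is the bookkeeping in the penalization lemma and the verification that the maximizing pair stays interior, which rests on $M > 0$ and the common zero boundary data. An alternative, more control-theoretic route would characterize $u$ via the representation $u(x) = \inf_{\gamma} \int_\gamma \rho\, ds$ as the maximal subsolution with zero boundary data and then show any supersolution dominates it; this is essentially equivalent, but the doubling argument above is the most self-contained.
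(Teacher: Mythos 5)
Your argument is correct and is essentially the approach the paper takes: the paper treats Proposition \ref{prop:boundeddomaincomp} as a standard result (citing \cite{bardi2008optimal}) and proves only the unbounded analogue in the appendix, but that appendix proof uses exactly your scheme — rescale to a strict subsolution $(1-\epsilon)u$, double variables with a quadratic penalization, and derive a contradiction from $\rho(y)\leq(1-\epsilon)\rho(x)$ via continuity and positivity of $\rho$. Your caveat about needing $\rho>0$ at the interior maximizer is well taken, and the paper's own argument relies on the same implicit positivity assumption.
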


			For densities whose support is $\mathbb{R}^d$ we have a the following comparison principle that assumes the existence of a bounded supersolution of equation \eqref{eq:eikonal}.}
		
		\begin{proposition}
			\label{prop:comparisonunbounded}
			Let $v$ be a bounded supersolution of $|D u (x)|= \rho (x)$ with $u \rightarrow 0$ when $|x| \rightarrow \infty$. If $u$ is a subsolution, then $u \leq v$.
		\end{proposition}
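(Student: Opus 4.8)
The plan is to reduce the unbounded-domain comparison principle to the well-established bounded-domain one (Proposition \ref{prop:boundeddomaincomp}) by exhausting $\mathbb{R}^d$ with large balls and using the decay conditions at infinity to control the boundary terms. Suppose, for contradiction, that $\sup_{\mathbb{R}^d}(u-v) =: m > 0$. The first step is to show this supremum is attained: since $v$ is bounded and a supersolution with $v \to 0$ at infinity, and $u$ is a subsolution (hence, by the control-theoretic interpretation or a direct barrier argument, also decays or is at least bounded above with $u-v \to 0$ at infinity — this is where I would want to be careful, see below), the function $u-v$ is continuous and tends to $0$ at infinity, so it attains its positive maximum $m$ at some finite point $x_0$.

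Next I would localize: pick $R$ large enough that $x_0 \in B_R(0)$ and such that on $\partial B_R(0)$ we have $u - v < m/2$ (possible since $u-v\to 0$). Now consider the shifted function $\tilde v := v + m/2$ on the bounded domain $\Omega_R := B_R(0)$. Then $\tilde v$ is still a supersolution of $|Du| = \rho$ on $\Omega_R$ (adding a constant does not affect the gradient), and on $\partial \Omega_R$ we have $u < v + m/2 = \tilde v$, i.e. $u - \tilde v < 0$ on the boundary. However, at the interior point $x_0$ we have $u(x_0) - \tilde v(x_0) = m - m/2 = m/2 > 0$. This contradicts the bounded-domain comparison principle (Proposition \ref{prop:boundeddomaincomp}) applied on $\Omega_R$ with boundary data: more precisely, one applies the standard comparison in the form that a subsolution lying below a supersolution on $\partial\Omega_R$ must lie below it throughout $\Omega_R$ — equivalently, one checks $\max_{\overline{\Omega_R}}(u - \tilde v) = \max_{\partial\Omega_R}(u-\tilde v) < 0$, contradicting $u(x_0) - \tilde v(x_0) > 0$.

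The main obstacle is the very first point: justifying that $u - v \to 0$ (or at least $\limsup_{|x|\to\infty}(u-v) \le 0$) at infinity, since the hypothesis only imposes $u \to 0$ and $v \to 0$ separately as boundary-type conditions, but a priori a subsolution $u$ need not literally converge to $0$ — the boundary condition at infinity for $u$ should be read in the viscosity/relaxed sense. The clean fix is to observe that we only ever need $\limsup_{|x|\to\infty} u(x) \le 0$, which follows because $u$ is a subsolution of $|Du| \le \rho$ with $\rho \in L^1(\mathbb{R}^d)$ integrable (being a probability density): any such subsolution satisfies $u(x) \le u(y) + \int \rho\,ds$ along paths, so $u$ is bounded and its $\limsup$ at infinity is $\le 0$ by the tail decay of $\rho$; combined with $v \ge -\|v\|_\infty$ bounded and $v \to 0$, we get $\limsup_{|x|\to\infty}(u-v) \le 0$, which is all that is used to locate the maximum of $u-v$ in a compact set. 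Everything else is routine manipulation of viscosity sub/supersolutions and an application of the already-cited bounded comparison principle.
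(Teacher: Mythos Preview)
Your argument is essentially correct, but it follows a genuinely different route from the paper's. You localize to large balls $B_R$ and invoke the bounded-domain comparison principle (Proposition~\ref{prop:boundeddomaincomp}) as a black box, using the decay at infinity to force $u<\tilde v$ on $\partial B_R$. The paper instead works on all of $\mathbb{R}^d$ throughout: it first proves comparison for \emph{bounded} sub- and supersolutions directly via the doubling-of-variables technique (introducing $\Phi(x,y)=u_\varepsilon(x)-v(y)-\tfrac{\alpha}{2}|x-y|^2$ with $u_\varepsilon=(1-\varepsilon)u$ a strict subsolution, and using the decay at infinity to show the penalized supremum is attained at finite points). It then handles a possibly unbounded subsolution $u$ not by a decay argument like yours, but by a smooth truncation $w(x)=(R-1)\Psi(u(x)/R)$ with $\Psi$ globally bounded and equal to the identity near the origin, so that $w$ is a bounded strict subsolution agreeing with $(1-1/R)u$ on larger and larger sets as $R\to\infty$.

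Your approach is arguably more elementary once Proposition~\ref{prop:boundeddomaincomp} is in hand, though note that you use it in the slightly stronger form ``$u\le v$ on $\partial\Omega$ implies $u\le v$ in $\Omega$'' rather than the zero-boundary form actually stated; this extension is standard but should be remarked. The paper's route is more self-contained and sidesteps entirely the issue you wrestle with---whether a general subsolution really satisfies $\limsup_{|x|\to\infty}u(x)\le 0$---by simply truncating $u$ to something bounded. Your control-theoretic justification of that $\limsup$ bound is plausible but, as you note, somewhat informal for an arbitrary viscosity subsolution; the truncation trick avoids needing it at all.
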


		{\color{\changed}For the proof of Proposition \ref{prop:comparisonunbounded} refer to the appendix. In our case, we can straightforwardly define a bounded supersolution of \eqref{eq:eikonal} that guarantees that Proposition \ref{prop:comparisonunbounded} is applicable:
			\begin{equation}
				\label{eq:boundedsupersolution}
				v(x)= \min \left\{ \int_{-\infty}^{x_1} \rho(\xi,x') d \xi, \int_{x_1}^{\infty} \rho(\xi,x') d \xi \right\} \mbox{,}
			\end{equation} 
			where $x'$ stands for the last $d-1$ coordinates of a point $x=(x_1,\ldots,x_d)$. The inequality relating sub- and supersolutions in Propositions \ref{prop:boundeddomaincomp} and \ref{prop:comparisonunbounded} implies that the explicit supersolution \eqref{eq:boundedsupersolution} provides an upper bound for the depth.
		} 	

\subsection{One dimensional case}
\label{sec:onedimensional}

It is useful to consider how this definition relates to the classical definition of quantiles, depths, and medians in one dimension. Given a density function on the real line, we may readily define the quantile depth by the formula
\begin{displaymath}
	D_Q(x) = \min \left( \int_{-\infty}^{x} \rho(z) \,dz, \int_{x}^{\infty} \rho(z) \,dz \right)
\end{displaymath}
We may then be express this equation in many different forms. For example, we may rewrite
\begin{displaymath}
	D_Q(x) = \min_{a \neq 0} \int_{a(x-y) \leq 0} \rho(z) \,dz.
\end{displaymath}
This definition is, naturally, unnecessarily complicated in $\R$, as we only need to really check over $a \in \{-1,+1\}$. However, this definition readily extends to $\R^d$ by simply replacing the region of integration by $a\cdot (x-y) \leq 0$: this corresponds to integrating over halfspaces, and matches the definition of the classical Tukey depth.

On the other hand, in the context of the eikonal depth in one dimension, we notice that
  \begin{align*}
    D_{eik}(x) &=  \min_{\gamma \in \mathcal{U}_x} \int_{0}^{\infty} \rho(\gamma(t))|\dot \gamma(t)|\,dt. \\
    &= \min\left( \min_{\gamma \in \mathcal{U}_x, \dot \gamma \geq 0} \int_{0}^{\infty} \rho(\gamma(t))|\dot \gamma(t)|\,dt \, , \, \min_{\gamma \in \mathcal{U}_x, \dot \gamma \leq 0} \int_{0}^{\infty} \rho(\gamma(t))|\dot \gamma(t)|\,dt\right) \\
    &= \min \left( \int_{-\infty}^{x} \rho(z) \,dz, \int_{x}^{\infty} \rho(z) \,dz \right) = D_Q(x),
  \end{align*}
  where the second equality holds since we can always decrease the cost by replacing a trajectory with one that is monotone, and the third equality holds by the change of variables formula.

%Alternatively, we may write
%\begin{displaymath}
%	\textcolor{\changed}{ D_Q(x) = \min_{u \in \mathcal{U}_x} \int_{0}^{\infty} \rho(x(t))|\dot x(t)|dt.}
%\end{displaymath}
As is the case with the Tukey depth, such a definition is unnecessarily complex, as there are only truly two paths leading to spatial infinity along the real line. But this definition extends directly to higher dimension\textcolor{\changed}{, motivating Definition \ref{def:eikonalpaths}}.

\textcolor{\changed}{
We also notice that in one dimension we may directly verify that the quantile depth, which is equal to the eikonal depth, is a viscosity solution of the eikonal equation \eqref{eq:eikonal}, under mild continuity assumptions upon $\rho$. Indeed, the quantile depth is differentiable at any point except for on the boundary of the set $\{D_Q = 1/2\}$, and by computing the derivative, using the fundamental theorem of calculus we see that it will satisfy the equation in the classical sense at those points, which automatically implies the super- and sub-solution inequalities. On points where $D_Q$ is not differentiable, we can directly verify that the super- and sub-solution inequalities are satisfied.}%  (with derivative satisfying the equation \eqref{eq:eikonal}, and hence ) at any point , and on those points one can directly verify that the sub- and super-solution inqualities are satisfied. }
%\textcolor{\tochange}{Maybe all example: 1D depth for uniform distribution is viscosity solution?}

\subsection{Related literature}
\label{sec:literature}

Multivariate medians and depths have been studied within the context of robust statistics for many years: for example the Tukey depth was introduced in the mid 1900's \cite{Hotelling,hodges1955bivariate,Tukey-1975}. Many alternative notions of depth have been proposed. These include, for example, the projection depth \cite{zuo2003projection}, the Oja depth \cite{oja1983descriptive}, the zonoid depth \cite{dyckerhoff1996zonoid}, the Mahalanobis depth \cite{mahalanobis1936generalized}, the convex peeling depth \cite{barnett1976ordering}, and the Monge Kantorovich depth \cite{chernozhukov2017monge}. These depths each carry unique advantages and disadvantages, which are compared in \cite{mosler2020choosing}. In briefest summary, these depths tend to either be i) robust and intepretable, but difficult to compute (e.g. the Tukey, projection, convex peeling), ii) interpretable and computable but not as expressive or robust (e.g. the Mahalanobis depth), or iii) not as easy to interpret (e.g. the zonoid or Monge-Kantorovich depth).

Depth functions have seen application in many different settings. They help to identify inliers and outliers of distributions, which can be an important pre-processing step in many tasks \cite{zuo2003projection,jornsten2004clustering}. They provide an ordering of data, which is useful for certain types of statistical tasks \cite{wang2005nonparametric,zuo2004stahel}. For many choices of depth these quantities are robust, meaning that they are insensitive to certain types of (potentially adversarial) perturbations \cite{zuo2000general}. Depths have also been used for data visualization \cite{rousseeuw1999bagplot}.

Despite their central place in robust statistics, our understanding of many of their properties remains incomplete. For example, in the last five years there have been many works exploring analytical properties of halfspace depths, including connections to convex geometry \cite{nagy2019halfspace} and differential equations \cite{ryanmartin}. Fundamental questions about whether depths characterize their distributions, and whether depth functions are smooth and well approximated by empirical approximations, are still of current interest \cite{masse2004asymptotics,nagy2021halfspace,nagy2019halfspace}. Similar questions in the context of other depths have also been the topic of recent work \cite{calder2020limit,cook2020rates}. Some extensions of the halfspace depth to non-Euclidean settings have also been given in \cite{carrizosa1996characterization}, \cite{small1997multidimensional}.

%@article{small1997multidimensional,
%  title={Multidimensional medians arising from geodesics on graphs},
%  author={Small, Christopher G},
%  journal={The Annals of Statistics},
%  pages={478--494},
%  year={1997},
%  publisher={JSTOR}
%}
%
%@article{carrizosa1996characterization,
%  title={A characterization of halfspace depth},
%  author={Carrizosa, Emilio},
%  journal={Journal of multivariate analysis},
%  volume={58},
%  number={1},
%  pages={21--26},
%  year={1996},
%  publisher={Elsevier}
%}

Even defining what is meant by a statistical depth is not immediately obvious. The definition from one prominent work \cite{zuo2000general} is discussed in more detail in Section \ref{sec:otherdefs}. However, some of those notions are rather rigid, and have been relaxed in other works. For example, the convexity of level sets is relaxed in \cite{chernozhukov2017monge} and subsequent works. Some authors prefer depths that can capture clustering and multi-modality. This often comes by eschewing a globally defined depth for one that is locally defined: this is a sticking point for many authors that prefer to think of depths as globally defined orderings that are distinct from local densities. Our work charts a path between these two points of view, by constructing a depth which is globally defined, but which is flexible enough to capture {\color{\changed}multi-modal behavior.}

Our definition builds naturally upon a very {\color{\changed}mature} literature surrounding optimal control and Hamilton-Jacobi equations, which only very recently have been linked to statistical depths in the {\color{\changed}context of the halfspace depth in \cite{ryanmartin} and the convex hull depth in \cite{calder2020limit}}. The eikonal equation, usually with constant right hand side, is the most classical example of a viscosity solution of a Hamilton-Jacobi equation, and references on that topic may be found in \cite{bardi2008optimal,katzourakis2014introduction}. This equation also characterizes distance functions, {\color{\changed}which are a central topic in Riemannian \cite{do1992riemannian} and, more generally, metric geometry \cite{burago2001course}.} Indeed, in some ways the depth that we define here may be seen as a modest generalization of the distance function in those contexts, but the interpretation as a statistical depth is, to our knowledge novel.

Numerical solutions for Hamilton-Jacobi equations, and more specifically the eikonal equation, have been well-studied. The fast marching method \cite{sethian1999level}, which marries the dynamic programming principle with upwind numerical schemes, is a standard tool in solving these equations numerically. This method has also been extended to unstructured grids \cite{sethian2000fast} and graphs \cite{desquesnes2013eikonal}. We give a very brief treatment of these methods, with a special focus on graphs based upon empirical measures, in Section \ref{sec:numerics}.

Finally, there has been a lot of recent activity proving a rigorous connection between graph-based variational methods on graphs and their continuum limits. While many of these works have focused on graph Laplacians and total variation energies and their associated statistical problems (see, e.g.,  \cite{trillos2018error,trillos2017new,garcia2020maximum,trillos2016continuum}), \textcolor{\changed}{a few recent works have also made steps in this direction in the context of eikonal equations \cite{calder2021boundary,fadili2021limits}}. Discussion of empirical approximations of the boundary of domains, an issue we encounter in Section \ref{sec:extensionNonEucli} can also be found in \cite{calder2021boundary}, \cite{vaughn2019diffusion}. These works provide context for the scalings that we chose in Section \ref{sec:graph_setting} when constructing depths on empirical measures. Finally, while preparing this work we were made aware of another group independently working on a similar class of eikonal equations on graphs \cite{Calder-Ettehad-2022}. Their work is especially focused on cluster-aware distances, wherein $\phi$ is monotonically decreasing, but includes our framework as a particular case.

\section{Properties of eikonal depth}
\subsection{{\color{\changed}Main properties}}
\label{sec:mainproperties}
Before describing in more detail the properties of the eikonal depth, it is illustrative to consider a few examples where the depth can be explicitly calculated.
\begin{example}
	Let $\Omega$ be a bound, open, convex domain, and let $\rho \equiv \frac{1}{\mathcal{L}^d(\Omega)}$. Then the eikonal depth $u$ is given by
	\begin{displaymath}
		{\color{\changed}	D_{eik}(x)} = \phi\left(\frac{1}{\mathcal{L}^d(\Omega)}\right) d(x,\partial \Omega), \qquad d(x,\partial \Omega) := \inf_{y \in \partial \Omega} |x-y|.
	\end{displaymath}
	We notice that if $\phi(s) = s^{1/d}$ then if we replace $\Omega$ with $a\Omega$ then the leading term in the expression scales like $a^{-1}$. It is straightforward to show that the $d(x,\partial \Omega)$ term will scale like $a$ when we rescale $\Omega$, and hence when we use $\phi(s) = s^{1/d}$ then we observe that{\color{\changed}, the eikonal depth is scale invariant in this example.}
	
	In the case where $\Omega$ is given by the unit ball, we may directly compute
	\[
	{\color{\changed}	D_{eik}(x)} = (1-|x|) \phi\left( \frac{\Gamma(d/2 + 1)}{\pi^{d/2}} \right),
	\]
	{\color{\changed}where $\Gamma$ is the Gamma function.}
	
	We notice that if we let $\phi(s) = s^{1/d}$ then the power $d$ disappears from the $\pi$, and one can use Sterling's formula to approximate $\Gamma(d/2+1)^{1/d} \sim \left( \frac{d}{2e} \right)^{1/2} (\pi d)^{\frac{1}{2d}}$, which in turn implies that the maximum eikonal depth of a unit ball in $\R^d$ is well-approximated by $\left( \frac{d}{2\pi e} \right)^{1/2} (\pi d)^{\frac{1}{2d}}$. \textcolor{\changed}{This implies that for large $d$ the maximum eikonal depth of a spherical distribution behaves like $(\frac{d}{2 \pi e})^{1/2}$, which stands in constrast with the maximum of the Tukey depth of a spherical distribution $\frac{1}{2}$, which is independent of the dimension $d$.} 
\end{example}

\begin{example}
	Let $\rho$ be a multivariate normal distribution with identity covariance. We notice that we may write $\rho(x) = G(|x|)^d$, where $G$ is a standard one-dimensional gaussian.  Then we compute that the depth of $\rho$ is given by
	\[
	{\color{\changed}	D_{eik}(x)} = \int_{|x|}^\infty \phi(G(t)^d) \,dt.
	\]
	When we use $\phi(s) = s^{1/d}$, this simplifies to just the tail integral of a one-dimensional Gaussian. In this case the maximum depth is given by $1/2$ \textcolor{\changed}{independently of $d		$}.
\end{example}

{\color{\changed}A well-known property of the Tukey depth is its invariance under affine transformations, a property shared by some other statistical depths such as the Mahalanobis, simplicial, zonoid and convex peeling depths \cite{mosler2020choosing}. Although the eikonal depth is not invariant under affine transformations, an affine invariant version of it can be constructed, in the same way as for the Oja, spatial, and lens depth \cite{mosler2020choosing}. More details on this procedure are provided after Proposition  It is not hard to see that the eikonal depth is not affine invariant:} As can be seen from its control interpretation, the value of the depth depends on the length of trajectories. A given affine transformation $Ax +b$ can modify the length of a curve while keeping the velocity $\frac{1}{\rho(A^{-1} (x-b))}$ along the curve unchanged, which may result in a change in the minimum traveling time in Definition \ref{def:eikonalpaths}. {\color{\changed}The eikonal depth is, however, invariant under rigid motions:}

\begin{proposition}\label{prop:rigid}
	For any choice of $\phi$ the eikonal depth is invariant under rigid (i.e. distance preserving) affine transformations.
\end{proposition}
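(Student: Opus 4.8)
The plan is to argue directly from the control formulation of Definition~\ref{def:eikonalpaths} by exhibiting an explicit cost-preserving bijection between admissible curves for $F$ and those for the transformed distribution. Write a rigid affine map as $T(x) = Ax + b$ with $A$ orthogonal, and let $F' = T_\# F$ be the pushforward. Since $|\det A| = 1$, $F'$ has continuous density $\rho' = \rho \circ T^{-1}$ and support $T(\mathrm{supp}\,\rho)$. The claim to prove is $D_{eik}(T(x),F') = D_{eik}(x,F)$ for every $x$.

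First I would check that $\gamma \mapsto T\circ\gamma$ is a bijection from $\mathcal{U}_x$ (relative to $F$) onto $\mathcal{U}_{T(x)}$ (relative to $F'$). This is immediate: $T$ is a homeomorphism of $\R^d$ with $T(\gamma(0)) = T(x)$; being an isometry it sends curves of locally finite length to curves of locally finite length; if $\mathrm{supp}\,\rho = \R^d$ then $|T(\gamma(t))|\to\infty$ iff $|\gamma(t)|\to\infty$; and if $\mathrm{supp}\,\rho = \overline\Omega$ then $T(\partial\Omega) = \partial(T\Omega) = \partial(\mathrm{supp}\,\rho')$, so the terminal condition is preserved. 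The inverse is $\gamma' \mapsto T^{-1}\circ\gamma'$. Next I would compute the cost. Because $A$ is orthogonal, $\big|\tfrac{d}{dt}(T\circ\gamma)\big| = |A\dot\gamma| = |\dot\gamma|$ (with $\dot\gamma$ interpreted as a vector measure when $\gamma$ is merely of locally finite length, the total-variation measure being unchanged by the linear isometry $A$); and $\rho'(T(\gamma(t))) = \rho(T^{-1}(T(\gamma(t)))) = \rho(\gamma(t))$, so $\phi(\rho'(T\circ\gamma)) = \phi(\rho(\gamma))$ pointwise. Hence
\begin{equation*}
J_{F'}(T\circ\gamma) = \int_0^\infty \phi\big(\rho'(T(\gamma(t)))\big)\,\big|\tfrac{d}{dt}(T\circ\gamma)(t)\big|\,dt = \int_0^\infty \phi\big(\rho(\gamma(t))\big)\,|\dot\gamma(t)|\,dt = J_F(\gamma),
\end{equation*}
with the analogous identity on $[0,T]$ in the compact-support case. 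Taking the infimum over $\gamma\in\mathcal{U}_x$ and using the bijection gives the result.

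The argument is essentially bookkeeping, so the only genuine point to be careful about is the low-regularity setting: one should either reduce to rectifiable, arc-length-parametrized curves (legitimate since the infimum is insensitive to reparametrization, $\int_\gamma \phi(\rho)\,ds$ being an arc-length functional) or verify directly that pushing the measure $\dot\gamma$ forward by the linear isometry $A$ preserves its total variation. Alternatively, one could avoid curves altogether and invoke Definition~\ref{def:eikonalPop}: if $u$ is the viscosity solution of $|\nabla u| = \phi(\rho)$ with the appropriate boundary data, then $v := u\circ T^{-1}$ solves $|\nabla v| = \phi(\rho')$ with the transformed boundary data, since the viscosity sub/supersolution inequalities are stable under pre-composition with the smooth diffeomorphism $T^{-1}$ (test functions compose), and $|\nabla(u\circ T^{-1})| = |(A^{-1})^{\mathsf T}\nabla u\circ T^{-1}| = |\nabla u\circ T^{-1}|$ because $A$ is orthogonal; uniqueness via Propositions~\ref{prop:boundeddomaincomp} and~\ref{prop:comparisonunbounded} then identifies $v$ with $D_{eik}(\cdot,F')$. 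I expect the curve-based proof to be the cleanest to write, with the PDE argument relegated to a remark.
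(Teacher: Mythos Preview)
Your proposal is correct and follows exactly the paper's own approach: the paper's proof is a single sentence stating that the cost associated with any control is unchanged by a rigid affine transformation, and you have simply written out the details of that assertion (the bijection on $\mathcal{U}_x$ and the identity $J_{F'}(T\circ\gamma)=J_F(\gamma)$). Your PDE alternative is a nice remark but not needed.
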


\begin{proof}
	The proof of the previous proposition simply rests on the fact that the cost associated with any control $u$ is unchanged by such a rigid affine transformation. 
\end{proof}
Next, we consider the effect of uniform scalings of $x$ upon the eikonal depth. We observe invariance, {\color{\changed}up to a scaling factor,} for transformations of the type $a x + c$ for $a,c \in \mathbb{R}$. 

\begin{figure}
	\centering
	\begin{subfigure}[b]{0.49\textwidth}
		\centering
		\includegraphics[width=\textwidth]{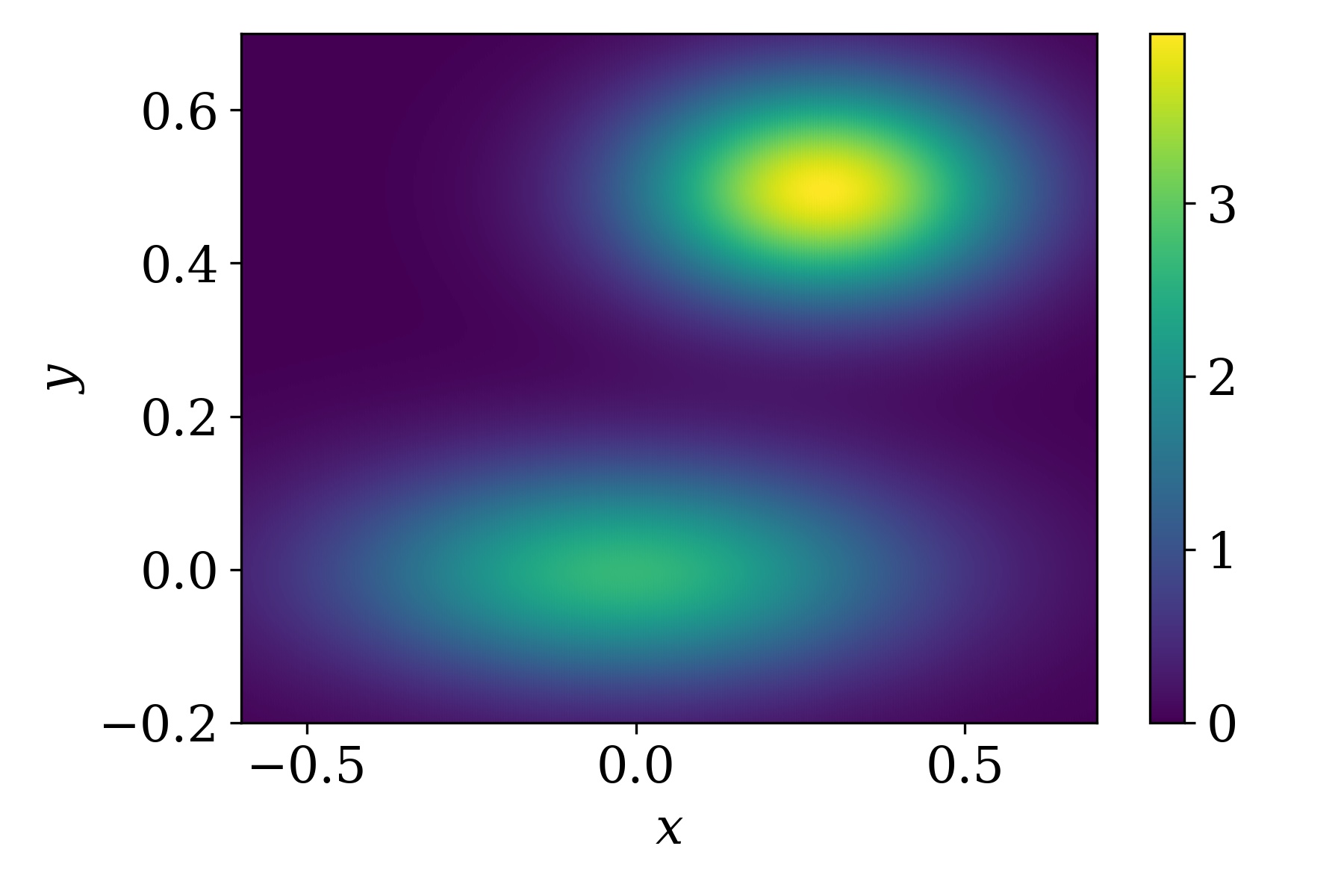}
		\caption{}
		\label{fig:densityGaussians}
	\end{subfigure}
	\hfill
	\begin{subfigure}[b]{0.49\textwidth}
		\centering
		\includegraphics[width=\textwidth]{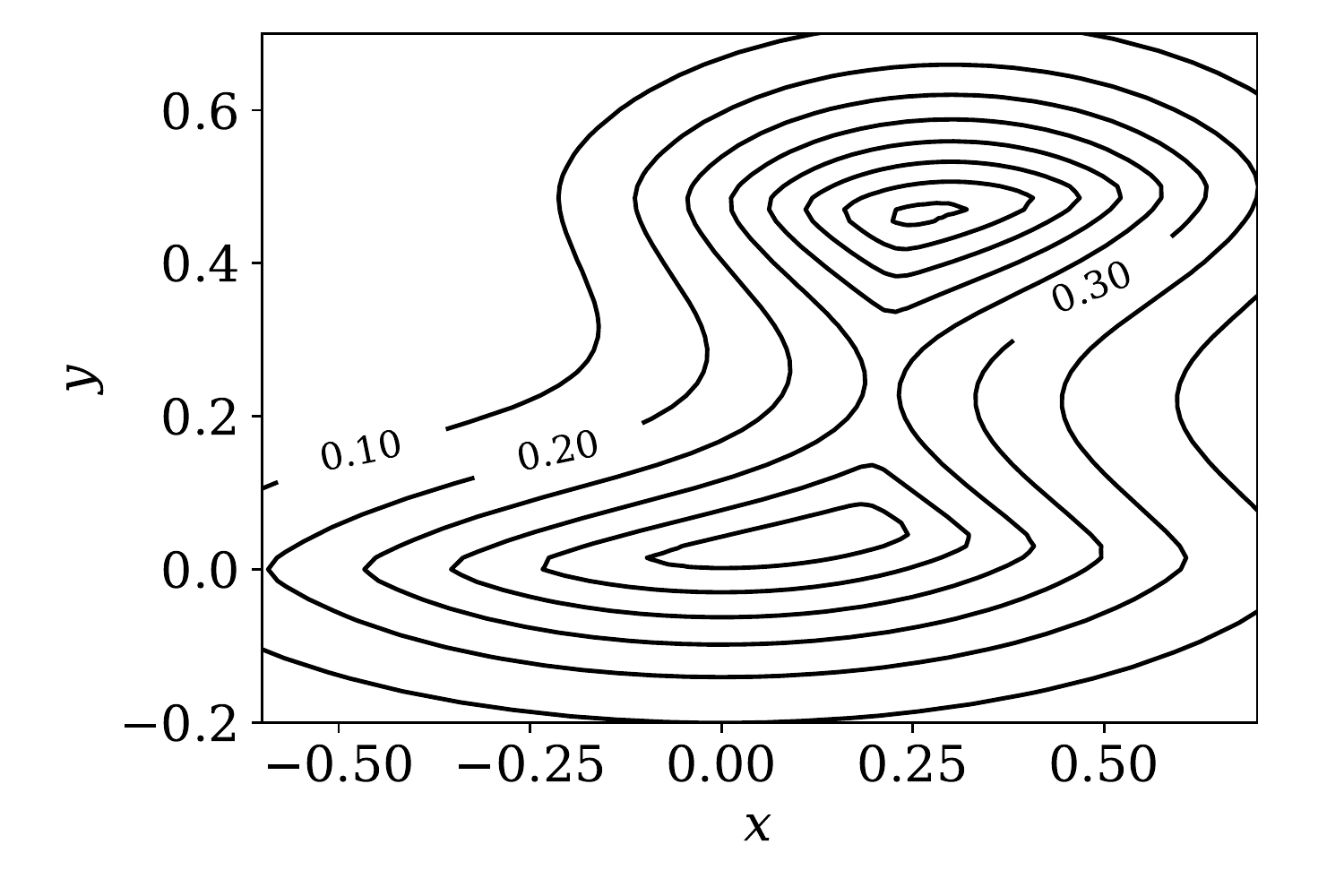}
		\caption{}
		\label{fig:contoursGaussians05}
	\end{subfigure}
	\begin{subfigure}[b]{0.49\textwidth}
		\centering
		\includegraphics[width=\textwidth]{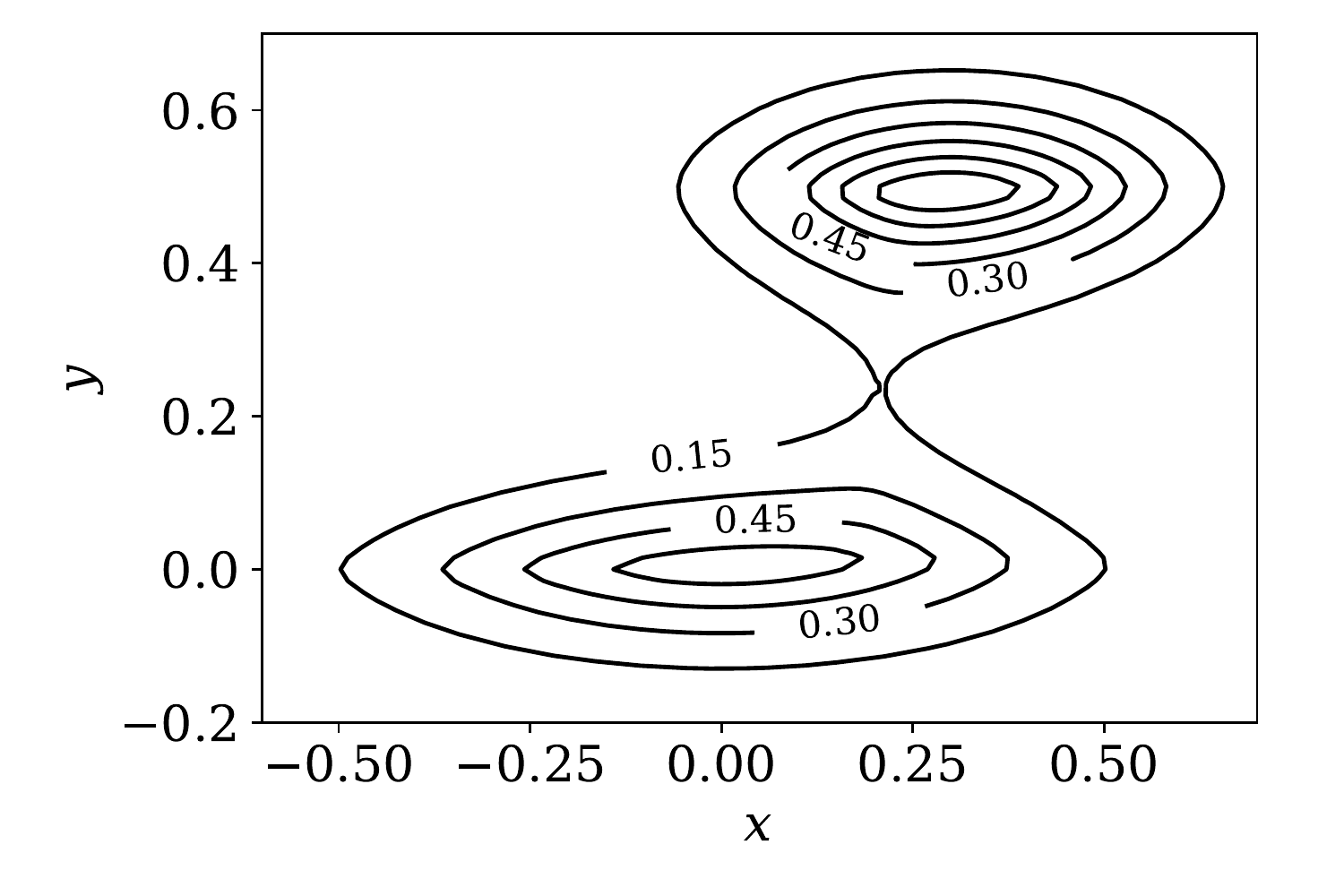}
		\caption{}
		\label{fig:contoursGaussians1}
	\end{subfigure}
	\hfill
	\begin{subfigure}[b]{0.49\textwidth}
		\centering
		\includegraphics[width=\textwidth]{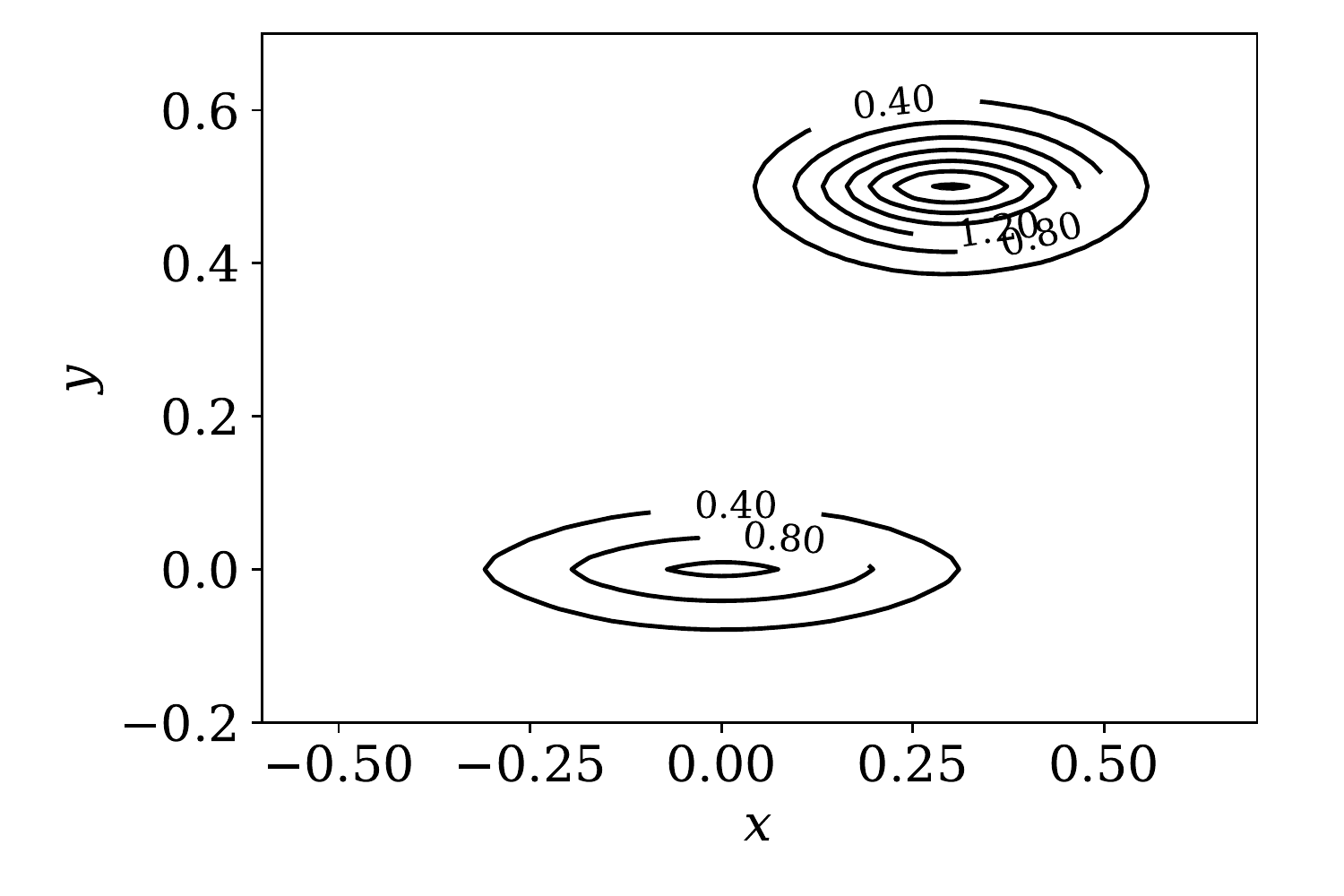}
		\caption{}
		\label{fig:contoursGaussians2}
	\end{subfigure}
	\caption{For $\phi(s)=s^{\alpha}$, the figures show the effect of $\alpha$ on the eikonal depth of a Gaussian mixture. Figure \ref{fig:densityGaussians} shows the probability density for the Gaussian mixture. Figures \ref{fig:contoursGaussians05}, \ref{fig:contoursGaussians1}, and \ref{fig:contoursGaussians2} show some level sets of the eikonal depth (for $\alpha=0.5$, $\alpha=1$, and $\alpha=2$) of the probability distribution with density as depicted in \ref{fig:densityGaussians}. 
		\label{fig:GaussiansAlpha}
	}
\end{figure}

\begin{proposition}
	For any $\phi(s) = s^\alpha$, $\alpha>0$, the eikonal depth is \emph{weakly scale invariant} in the sense that if $a^{d}\tilde \rho(ax + c) = \rho(x)$, with any $a>0$ and $c \in \R^d$, {\color{\changed}and $D_{eik}$} is the eikonal depth associated with $\rho$ (according to the function $\phi$) then the eikonal depth {\color{\changed}$\tilde D_{eik}$} associated with $\tilde \rho$ is given by $\tilde u(ax + c) = a^{-\alpha d + 1} u(x)$.
	\label{prop:scale-invariant}
\end{proposition}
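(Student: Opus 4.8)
The plan is to use the change-of-variables formula directly in the path-integral definition (Definition \ref{def:eikonalpaths}), since that formulation makes the scaling transparent. Let $D_{eik}$ denote the eikonal depth associated with $\rho$ and $\tilde D_{eik}$ the one associated with $\tilde\rho$, where $a^d \tilde\rho(ax+c) = \rho(x)$. The key observation is that the affine map $y = ax + c$ sets up a bijection between the admissible curves: if $\gamma \in \mathcal{U}_x$ is a curve starting at $x$ and escaping to infinity (resp. reaching $\partial\Omega$), then $\tilde\gamma(t) := a\gamma(t) + c$ is a curve starting at $ax+c$ escaping to infinity (resp. reaching $a\partial\Omega + c = \partial\tilde\Omega$), and this correspondence is onto $\mathcal{U}_{ax+c}$ with inverse $\gamma(t) = a^{-1}(\tilde\gamma(t) - c)$.

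Next I would track what happens to the cost functional. Since $\phi(s) = s^\alpha$, we have for the curve $\tilde\gamma$ that
\begin{equation*}
J(\tilde\gamma) = \int_0^\infty \phi\big(\tilde\rho(\tilde\gamma(t))\big)|\dot{\tilde\gamma}(t)|\,dt = \int_0^\infty \tilde\rho(a\gamma(t)+c)^\alpha \cdot a\,|\dot\gamma(t)|\,dt.
\end{equation*}
Using the relation $\tilde\rho(a\gamma(t)+c) = a^{-d}\rho(\gamma(t))$, the integrand becomes $a^{-\alpha d}\rho(\gamma(t))^\alpha \cdot a\,|\dot\gamma(t)| = a^{-\alpha d + 1}\phi(\rho(\gamma(t)))|\dot\gamma(t)|$, so $J(\tilde\gamma) = a^{-\alpha d + 1} J(\gamma)$. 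Taking the infimum over all admissible $\gamma$ (equivalently over all admissible $\tilde\gamma$, by the bijection above) and using that $a^{-\alpha d+1} > 0$ is a fixed positive constant that pulls out of the infimum, we conclude $\tilde D_{eik}(ax+c) = a^{-\alpha d + 1} D_{eik}(x)$, which is the claimed identity.

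I should also address the measure-theoretic point: the curves are only of locally finite length, so $\dot\gamma$ may only be a measure rather than an $L^1$ function. The scaling $\dot{\tilde\gamma} = a\,\dot\gamma$ still holds at the level of vector measures, and the length (total variation) of $\tilde\gamma$ on any interval is $a$ times that of $\gamma$, so the arc-length integral transforms as claimed; one can justify this by approximating $\gamma$ with polygonal curves or by working directly with the definition of $\int_\gamma \phi(\rho)\,ds$ as a supremum over partitions. This is essentially a bookkeeping issue and not a genuine obstacle. The only mild subtlety — and the closest thing to a ``hard part'' — is making sure the boundary condition transforms correctly in the compact-support case, namely that $\mbox{supp}(\tilde\rho) = \overline{\tilde\Omega}$ with $\tilde\Omega = a^{-1}(\Omega - c)$ and hence $\tilde\gamma(T) \in \partial\tilde\Omega$ iff $\gamma(T) \in \partial\Omega$; this follows immediately from the relation between $\rho$ and $\tilde\rho$ and the fact that affine maps send boundaries to boundaries. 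Alternatively, one could run the entire argument at the level of the PDE in Definition \ref{def:eikonalPop}, checking that $\tilde u(y) := a^{-\alpha d+1} u(a^{-1}(y-c))$ is a viscosity solution of $|\nabla \tilde u| = \phi(\tilde\rho)$ with the correct boundary condition (using the chain rule for viscosity sub/supersolutions under affine changes of variable) and then invoking the comparison principles (Propositions \ref{prop:boundeddomaincomp} and \ref{prop:comparisonunbounded}) for uniqueness; but the path-integral argument is cleaner and I would present that one.
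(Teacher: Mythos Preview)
Your proposal is correct and follows essentially the same route as the paper: both use the path-integral Definition \ref{def:eikonalpaths}, push a curve through the affine map $\tilde\gamma = a\gamma + c$, compute that the cost scales by $a^{-\alpha d + 1}$, and then take the infimum. One small slip to fix: you write $\tilde\Omega = a^{-1}(\Omega - c)$ in the compact-support discussion, but from $\tilde\rho(y) = a^{-d}\rho(a^{-1}(y-c))$ one gets $\tilde\Omega = a\Omega + c$, consistent with what you (correctly) wrote earlier as $\partial\tilde\Omega = a\partial\Omega + c$.
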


\begin{proof}
	Let $\gamma:[0,\infty) \to \R^d$ be a path which begins on $x \in \Omega$ and satisfies $\lim_{t \to \infty} \gamma(t) \in \partial \Omega$. Let $\widetilde \gamma(t) = a \gamma (t)+c$. By the definition of $\widetilde{\gamma}$ we have that $\lim z(t) \in \partial \widetilde \Omega$, where $\widetilde \Omega= a \Omega +c$. The cost associated with $\widetilde{\gamma}$ is given by
	\begin{displaymath}
		\int_0^T \widetilde{\rho}^{\alpha}(\widetilde{\gamma}(t)) | \dot{\widetilde{\gamma}} (t)| \,dt = \int_0^T a^{-\alpha d}\rho^{\alpha}(\gamma(t)) a|\dot \gamma(t)|\,dt.
	\end{displaymath}
	Taking an infimum over paths on both sides gives the desired result.
\end{proof}

We notice that in the previous proposition that choosing $\alpha = 1/d$ is a critical scaling, in which the depth function is invariant under uniform scalings. \textcolor{\changed}{This property is important because a result by Serfling \cite{serfling2010equivariance} guarantees that functions with this invariant property can be made affine invariant using a scatter transform. Following \cite{mosler2020choosing}, a scatter matrix $R_X$ is a positive definite $d \times d$ matrix that satisfies $R_{A X+b}=\lambda_{X,A,b} \, A \, R_X \, A^T$ for any $A$ of full rank, any $b$, and some $\lambda_{X,A,b}>0$. In order to make the depth affine invariant, one needs to transform the data as $x \mapsto R_X^{-1/2} (x - \theta(X))$ where $R_X$ is the scatter matrix and $\theta(X)$ is a location parameter}. A computational example of this procedure applied to a mixture of two Gaussian distributions is shown in Figure \ref{fig:GaussianMixture}, where we start with two Gaussian distributions centered at the origin and $(0.3,0.5)$ with diagonal covariance matrices. \textcolor{\changed}{Using the covariance matrix of the Gaussian mixture as the scatter matrix $R_X$, and the mean of the Gaussian mixture, $\mu$, as the location parameter, we transformed the data as $x \mapsto R^{-\frac{1}{2}}(x-\mu)$.}

\begin{figure}
	\centering
	\begin{subfigure}[b]{0.49\textwidth}
		\centering
		\includegraphics[width=\textwidth]{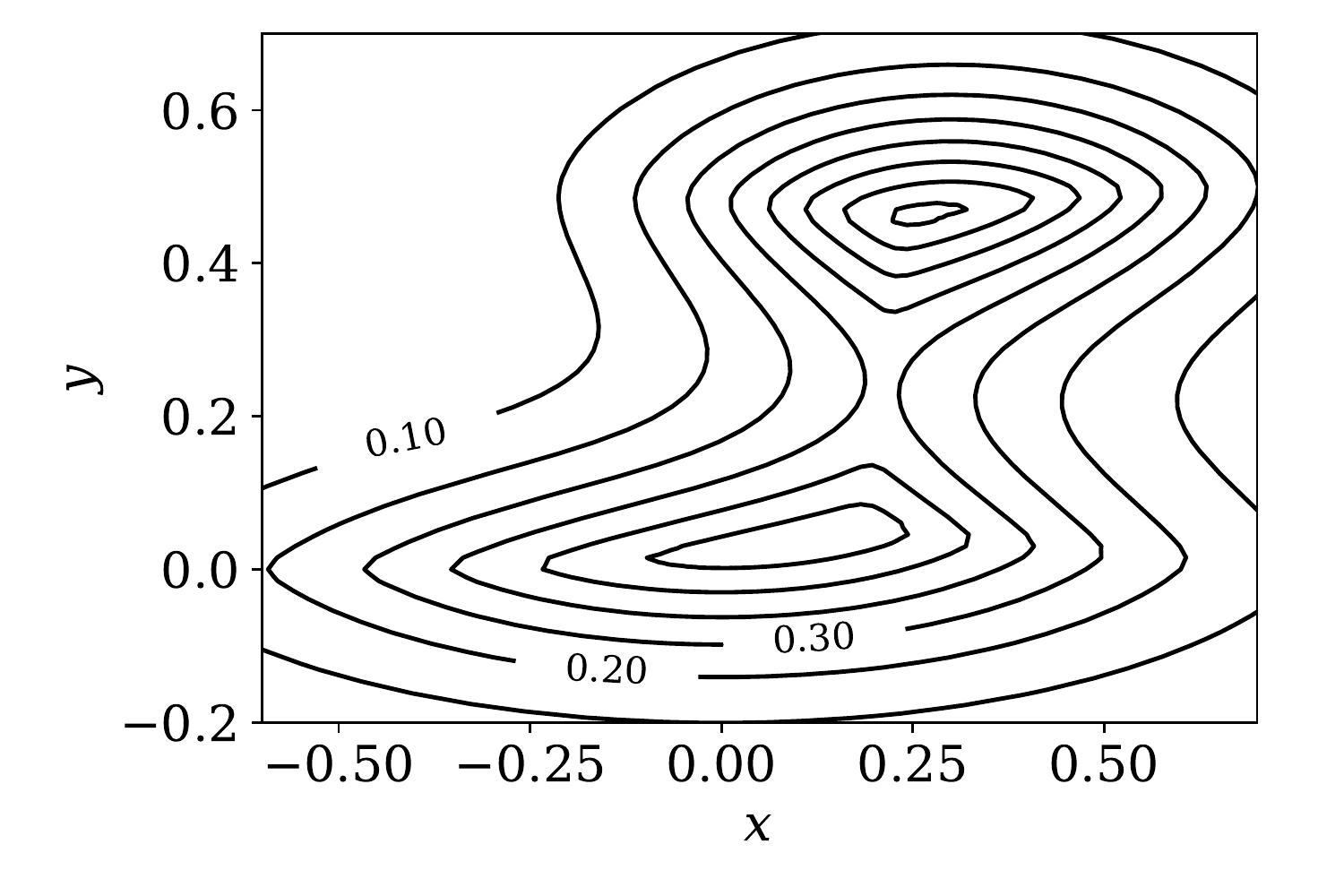}
		\caption{}
		\label{fig:GaussiansContours}
	\end{subfigure}
	\hfill
	\begin{subfigure}[b]{0.49\textwidth}
		\centering
		\includegraphics[width=\textwidth]{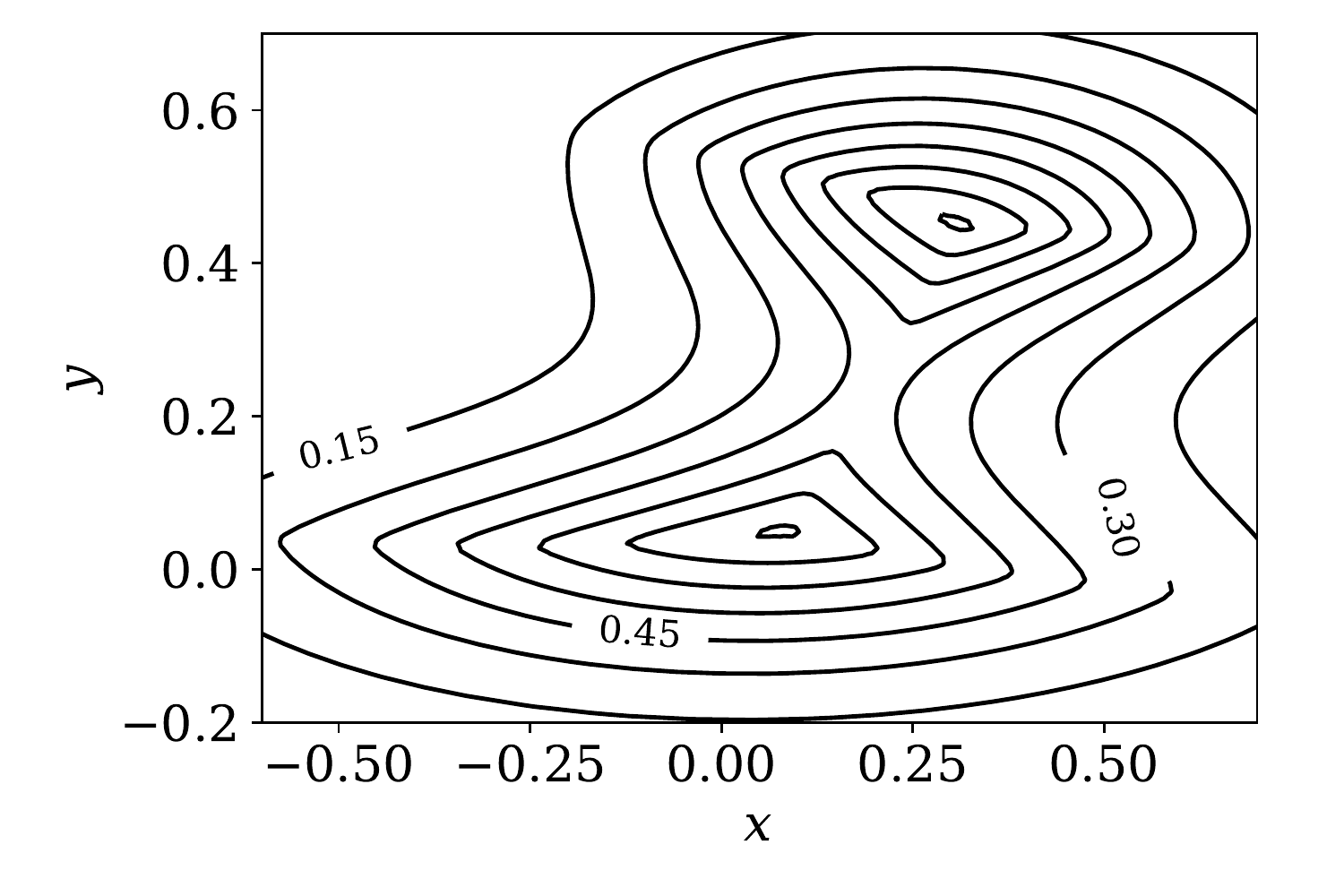}
		\caption{}
		\label{fig:GaussiansInvContours}
	\end{subfigure}
	\caption{The figure shows some level sets of the eikonal depth of a Gaussian mixture \textcolor{\changed}{with $\phi(s)=s^{1/2}$} and some level sets of the corresponding affine invariant version.
		\label{fig:GaussianMixture}
	}
\end{figure}

\textcolor{\changed}{The Tukey depth admits a uniform maximal bound independent of dimension, namely $1/2$. It is natural to consider whether eikonal depths have a similar property. }% The following proposition It is natural to ask under what conditions one can guarantee that these depth functions have bounded maxima \textcolor{\tochange}{compare to other depths}.
The following proposition provides a first step in that direction.

\begin{proposition}
	Within the class of radially symmetric distributions:
	\begin{enumerate}
		\item {\color{\changed}For any $d>1$ there exist distributions with arbitrarily large maximal \emph{unnormalized eikonal depth}.}
		{\color{\changed}\item For any $d>1$ there exist distributions with arbitrarily large maximal \emph{normalized eikonal depth}.}%One distribution with largest depth is given by a uniform distribution on a ball, and the value of maximum depth is given by
		%\begin{displaymath}
		%	TODO
		%      \end{displaymath}
	\end{enumerate}
	\label{prop:median-UB}
\end{proposition}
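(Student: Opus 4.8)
The plan is to exploit radial symmetry to collapse the problem to one dimension, and then to exhibit explicit radial profiles for which the escape cost blows up.

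The first step is a reduction. If $\rho(x)=f(|x|)$ is radially symmetric with $f$ supported on $[0,R]$, $R\in(0,\infty]$, then $D_{eik}(x,F)=\int_{|x|}^{R}\phi(f(r))\,dr$ for every $x$. The inequality ``$\le$'' follows by using the outward radial ray from $x$, parametrized by arc length, as a competitor in \eqref{eq:energyEikonal}. For ``$\ge$'', given $\gamma\in\mathcal U_x$ set $\sigma(t)=|\gamma(t)|$; then $|\dot\sigma|\le|\dot\gamma|$, and with $h:=\phi\circ f\ge 0$ and $H(r):=\int_0^r h$, the chain rule and the triangle inequality give
\[
\int_\gamma\phi(\rho)\,ds=\int h(\sigma)\,|\dot\gamma|\,dt\;\ge\;\int h(\sigma)\,|\dot\sigma|\,dt\;\ge\;\Bigl|\int h(\sigma)\,\dot\sigma\,dt\Bigr|=\bigl|H(R)-H(|x|)\bigr|=\int_{|x|}^{R}h(r)\,dr,
\]
since $\sigma$ starts at $|x|$ and reaches (or tends to) $R$. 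This uses no monotonicity of $f$ and, after reparametrizing by arc length, applies to curves of locally finite length as in Definition \ref{def:eikonalpaths}. Hence the maximal depth is attained at the origin and equals $M(f):=\int_0^{R}\phi(f(r))\,dr$, subject to the single scalar constraint $\omega_{d-1}\int_0^{R}f(r)\,r^{d-1}\,dr=1$, where $\omega_{d-1}$ is the surface area of the unit sphere in $\R^d$; moreover, for $\phi(s)=s^\alpha$ the rescaling $f(r)\mapsto a^{d}f(ar)$ preserves this constraint and multiplies $M(f)$ by $a^{\alpha d-1}$. For part (i), $\alpha=1$ and $\alpha d-1=d-1>0$, so I would simply take $f$ to be the uniform profile on the ball $B_\varepsilon$; by the explicit formula for the uniform distribution on a convex domain given earlier in this section, $M(f)=\varepsilon/\mathcal L^d(B_\varepsilon)=\varepsilon^{1-d}/\mathcal L^d(B_1)\to\infty$ as $\varepsilon\to0^+$, using $d>1$.

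Part (ii) is the substantive case: for $\phi(s)=s^{1/d}$ we have $\alpha d-1=0$, so pure rescalings achieve nothing — this is precisely the scale-invariance noted after Proposition \ref{prop:scale-invariant} — and one must spread the density across many length scales. The key point is that the homogeneity $f(r)\sim r^{-d}$ is critical, because it makes $\phi(f(r))\sim r^{-1}$, so that the mass integral $\int f\,r^{d-1}\,dr$ and the depth integral $\int\phi(f)\,dr$ both reduce to $\int dr/r$. I would fix $0<a<b$ and take the profile $f(r)=c\,\max(r,a)^{-d}$ on $[0,b]$ (so that $\operatorname{supp}\rho=\overline{B_b}$; if one insists on support $\R^d$ or on continuity at $r=b$, append a harmless tail such as $f(r)=cb^{-d}e^{b-r}$ for $r\ge b$, which perturbs the integrals below only by terms vanishing as $b\to\infty$). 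Then $\phi(f(r))=c^{1/d}/\max(r,a)$, the constraint reads $\omega_{d-1}c\bigl(\tfrac1d+\ln(b/a)\bigr)=1$, and $M(f)=c^{1/d}\bigl(1+\ln(b/a)\bigr)$, so that
\[
M(f)=\frac{1+\ln(b/a)}{\bigl[\omega_{d-1}\bigl(\tfrac1d+\ln(b/a)\bigr)\bigr]^{1/d}}\;\sim\;\omega_{d-1}^{-1/d}\,\bigl(\ln(b/a)\bigr)^{1-1/d}\;\longrightarrow\;\infty
\]
as $b/a\to\infty$ (e.g. $a=1/b$, $b\to\infty$), precisely because $1-1/d>0$ when $d>1$. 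Equivalently, one may superpose $N$ disjointly supported rescaled copies of a fixed radial density on nested annuli, so that $M$ adds up to $N^{1-1/d}$ times the single-copy value; the $r^{-d}$ profile is the continuum limit of such a geometric cascade.

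The one genuinely non-routine ingredient is the part-(ii) construction: scale-invariance of the normalized depth rules out the naive ``shrink a bump'' strategy, and one must recognize the critical role of the $r^{-d}$ profile (or of a multi-scale cascade). The radial reduction above is standard, but I would be careful about two minor technicalities — that it holds under only the regularity imposed on $\rho$ in Definition \ref{def:eikonalpaths}, and that, exactly as for the uniform distribution on a square already used in the paper, one may allow a density discontinuous at the boundary of its support (or, alternatively, observe that the exponential tail above removes this issue at no cost).
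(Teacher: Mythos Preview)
Your proof is correct and follows essentially the same route as the paper: part~(i) via the scaling behavior of Proposition~\ref{prop:scale-invariant} (the paper just says ``scale any radial function''; you specialize to the uniform ball), and part~(ii) via a truncation of the critical profile $|x|^{-d}$, yielding a maximal depth of order $(\log)^{1-1/d}$. The paper's construction fixes the outer radius at $1$ and sends the inner cutoff $\e\to 0$, whereas you let $b/a\to\infty$, but these are the same idea up to rescaling; your added radial reduction lemma (that $D_{eik}(x)=\int_{|x|}^R\phi(f(r))\,dr$ for radial $\rho$) is a useful piece of rigor that the paper uses implicitly but does not spell out.
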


\begin{proof}
	First, for the unnormalized depth, we notice that by simply scaling the independent variable $x$ by a factor $a$ the depth function increases by a factor of $a^{1-d}$ ({\color{\changed}see Proposition \ref{prop:scale-invariant}}). In turn, we may obtain the first result by simply scaling any radially symmetric function appropriately.
	
	\textcolor{\moreedits}{For the second, we consider truncations of the function $|x|^{-d}$. In particular, we let 
		\begin{displaymath}
		\rho_\e(x) = \begin{cases} (1-\delta) \displaystyle \frac{1}{-\ln(\e) S_{d-1}}|x|^{-d} &\text{ for } \e < |x| < 1 \\ \delta \displaystyle \frac{1}{V_d(\varepsilon)} &\text{ otherwise,} \end{cases} 
		\end{displaymath}
		where $\delta$ is such that $\rho_{\varepsilon}(x)$ is continuous at $|x|=\varepsilon$.
		Here we use $S_{d-1}$ to denote the surface area of the $d-1$ dimensional sphere and $V_d(\varepsilon)$ to denote the volume of the $d$ dimensional ball of radius $\varepsilon$. The maximal depth for this distribution is clearly attained at the origin. This value may be computed by integrating, after a suitable rotation, along the line segment described by $\gamma(t)=(t,\ldots,0)$, where $t \in [0,1]$. For the normalized eikonal depth we have 
		\begin{equation}
		D_{eik}(x)=\int_{0}^{1} \rho_{\varepsilon}^{1/d} (t) dt \geq \int_{\varepsilon}^{1} \rho_{\varepsilon}^{1/d}(t)= \int_{\varepsilon}^{1} \frac{1}{(-\ln(\e))^{1/d} S_{d-1}^{1/d} | t|} dt=\frac{(-\ln(\e))^{1-1/d}}{S_{d-1}^{1/d}}.
		\end{equation}	
		Taking $\e \to 0$ gives the desired result.}
	
\end{proof}

%We notice that the normalized eikonal depth is in some sense a critical scaling: if $\phi(s) = s^{\alpha}$ with $\alpha< 1/d$ then, at least within the clasgs of radial distributions, one can carry out an exercise in the calculus of variations to show that the maximal depth is uniformaly bounded.
\textcolor{\changed}{The Tukey depth also admits a dimension dependent uniform lower bound on it's maximal value. In particular, for any distribution on $\R^d$ the point of greatest halfspace depth will have depth greater than or equal to $\frac{1}{d+1}$. The following example demonstrates that this is not the case for the eikonal depth.}% In an analogous manner, when $d>1$ it is possible to construct examples where the eikonal depth is arbitrarily small. \textcolor{\tochange}{Contrast with Tukey depth: $1/(d+1)$}

\begin{example}\label{ex:median-LB}
  For $d>1$, \textcolor{\changed}{ and for any $\phi$ which satisfies $\phi(0) = 0$, there exist distributions with arbitrarily small maximum $\phi-$eikonal depth. Let $\varphi$ be a smooth probability density compact support on $[-1/2,1/2]^d$, and let our probability distribution be given by
  \[
    \rho(x) =\frac{1}{K} \sum_{i=1}^K \varphi(x-x_i),
  \]
  where $x_i$ are distinct elements of $\mathbb{Z}^d$, namely they are points with distinct integer coordinates. We claim that the maximal Tukey depth of this distribution may be bounded by $\frac{C}{K}$. This can be proven by constructing a path from spatial $\infty$ to any point $x$ which only crosses the supports of one of the $\varphi(x-x_i)$, and then bounding the cost of that path due to $\varphi(x-x_i)$. More specifically, let $M$ be an upper bound on $\varphi$, and consider a point $x$ so that $x-x_i \in [-1/2,1/2]^d$ (other points $x$ can be handled in an analogous way). We then consider the trajectory defined by joining the ray $\gamma(t) = x_i - (1/2+t,1/2,\dots,1/2)$, $t \in [0,\infty)$, with the line segment connecting $x_i - (1/2,1/2,\dots,1/2)$ to $x$. The integrated cost due to the ray out to infinity will be zero, as the $x_i$ all have integer coordinates and $\varphi$ has support on $[-1/2,1/2]^d$. On the other hand, the cost due to the line segment connecting  $x_i - (1/2,1/2,\dots,1/2)$ to $x$ will have cost at most $\phi\left(\frac{M}{K}\right) \sqrt{d}$, as the density $\rho$ is bounded from above by $M/K$, $\phi$ is non-decreasing, and the length of the line segment to $x$ is at most $\sqrt{d}$. This proves the claim.} \end{example}

  \textcolor{\changed}{The eikonal depth does not necessarily define a single center, i.e. a point with maximal depth, in contrast to the Tukey depth and other well-studied statistical depths such as the Mahalanobis depth, or the convex peeling depth \cite{mosler2020choosing}. In some settings, this is actually a desirable property of the eikonal depth because it enables it to capture the multimodality of probability distributions, a property that is not shared by the aforementioned definitions of depth. In Section \ref{sec:otherdefs} we present a detailed discussion on the choice one faces between defining a center outward ordering and capturing the multimodality of distributions when defining a statistical depth.} {\color{\changed}For our eikonal depth, we will see} that when the distribution has several peaks that are high enough then there is a local maximum of the depth near each peak. Before formalizing this result for multimodal distributions we first need to introduce the following definition for the modes:

\begin{definition}
  We say that a distribution has well-separated modes \textcolor{\changed}{with respect to  $\phi$ if
  \begin{enumerate}
    \item Its density function has a finite number of local maxima (modes), which we write as $\{x_i\}_{i=1}^N$.
     \item There exists a finite, disjoint family of open balls $B_i$ so that each $B_i$ contains exactly one $x_i$.
     \item For all $i$ and for any  two points $x,x' \in \partial B_i$ there exists a curve $\gamma_{x,x'}$ which joins them and belongs to $\partial B_i$ so that we have $\int_{\gamma_{xx'}} \phi(\rho) ds \leq \min \left\{\int_{\tau_{xx_i}}\phi(\rho) ds, \int_{\tau_{x'x_i}}\phi(\rho) ds\right\}$ where $\tau_{yx_i}$ is an arbitrary curve joining $y \in \partial B$ and the mode $x_i$.
  \end{enumerate}
}
%\textcolor{\tochange}{Can we make this more formal? exists path $\gamma$ on $\partial B$ so that $\int_{\gamma} \rho \leq \int _{\tilde \gamma} \rho$ for all $\tilde \gamma$ satisfying...} to connect those two points around the boundary of the ball is smaller than the cost to connect one of them to the mode.
	\label{def:well-separated}
\end{definition}

\textcolor{\changed}{ With this definition in hand we can show the next property that provides a way of associating certain classes of modes with local maxima of the eikonal depth.} 
\begin{proposition}
	Suppose that $\rho$ has well-separated modes \textcolor{\changed}{with respect to $\phi$}, and let $B_i$ be the balls used to describe the well-separated property of the mode occuring at $x_i$. Then there is a local maximum of the eikonal depth inside $B_i$.
	\label{prop:well-separated}
\end{proposition}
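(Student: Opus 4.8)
The plan is to localize the escape problem to the ball $B_i$ via a dynamic programming (optimality) principle, and then use the well-separated hypothesis to show that the depth at the mode dominates every boundary value. Since $D_{eik}$ is continuous and $\overline{B_i}$ is compact, this forces the maximum of $D_{eik}$ over $\overline{B_i}$ to be attained at an interior point, which is then a local maximum. Throughout I write $d(a,b) := \inf_\sigma \int_\sigma \phi(\rho)\,ds$ for the length (pseudo-)metric induced by $\phi(\rho)$, where the infimum is over curves $\sigma$ joining $a$ and $b$; $d_{\partial B_i}(a,b)$ denotes the analogous infimum over curves constrained to $\partial B_i$; and $g := D_{eik}|_{\partial B_i}$.

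The first and central step is to establish, for every $p \in \overline{B_i}$, the localization identity
\[
D_{eik}(p) = \inf_{w \in \partial B_i}\big( d(p,w) + D_{eik}(w) \big).
\]
The inequality ``$\leq$'' is obtained by concatenating a near-optimal curve from $p$ to $w$ with a near-optimal escape curve from $w$ and letting the approximation error vanish. For ``$\geq$'', since $B_i$ is open and bounded and does not contain the escape target (spatial infinity, or $\partial\Omega$ in the compact-support case), any $\gamma \in \mathcal{U}_p$ has a first exit point $w=\gamma(t_0)\in\partial B_i$; splitting $\gamma$ at $t_0$ and using that the tail $\gamma|_{[t_0,\infty)}$ is itself an escape curve from $w$, whose cost is therefore at least $D_{eik}(w)$, while the initial piece costs at least $d(p,w)$, yields $\int_\gamma \phi(\rho)\,ds \geq d(p,w)+D_{eik}(w)$.

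Next I would exploit Definition \ref{def:well-separated}. For any $z,w\in\partial B_i$, escaping from $z$ by first following the boundary curve $\gamma_{z,w}\subset\partial B_i$ supplied by condition (iii) and then escaping optimally from $w$ produces a valid escape path, so $g(z)\leq d_{\partial B_i}(z,w)+g(w)$. Condition (iii) gives precisely $d_{\partial B_i}(z,w)\leq \min\{d(z,x_i),d(w,x_i)\}\leq d(x_i,w)$, whence $g(z)\leq d(x_i,w)+g(w)$ for all $z,w\in\partial B_i$. Taking the supremum over $z$ shows $\max_{\partial B_i}D_{eik}\leq d(x_i,w)+g(w)$ for every $w$, and taking the infimum over $w$ together with the localization identity at $p=x_i$ gives
\[
D_{eik}(x_i) \;\geq\; \max_{\partial B_i} D_{eik}.
\]
Finally, let $V=\max_{\overline{B_i}}D_{eik}$, attained by continuity and compactness. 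If $V>\max_{\partial B_i}D_{eik}$ the maximizer lies in the open set $B_i$; if $V=\max_{\partial B_i}D_{eik}$ then the sandwich forces $D_{eik}(x_i)=V$ with $x_i\in B_i$. In either case there is an interior point $p^\ast\in B_i$ with $D_{eik}(p^\ast)\geq D_{eik}(p)$ for all $p\in B_i$, i.e. a local maximum.

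I expect the main obstacle to be the rigorous justification of the exit-point decomposition in the localization identity for curves of merely locally finite length (and the unbounded-support geometry), together with the correct chaining of condition (iii) to produce the inequality $d_{\partial B_i}(z,w)\leq d(x_i,w)$: this is exactly the place where the well-separated hypothesis does its work, encoding that travel along $\partial B_i$ is no more expensive than routing through the high-density mode.
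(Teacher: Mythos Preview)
Your proposal is correct and follows essentially the same route as the paper: both arguments use the dynamic programming / localization principle to compare $D_{eik}(x_i)$ with boundary values, invoke condition (iii) of the well-separated definition to bound the cost of traversing $\partial B_i$ by the cost of reaching the mode, and conclude via continuity and compactness on $\overline{B_i}$. The only cosmetic difference is that the paper avoids setting up the full localization identity by directly fixing an optimal escape trajectory from $x_i$ and working with its single exit point $\tilde x \in \partial B_i$, which plays the role of the minimizing $w$ in your infimum.
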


\begin{proof}
  Let $p$ be an optimal trajectory, according to the energy \eqref{eq:energyEikonal},  terminating at the mode $x_i$. This path must pass through some point, which we call $\tilde x$, on the boundary of $B_i$. Now let $\hat x$ be any point on the boundary of $B_i$ and $\gamma_{\tilde x \hat x}$ be the path joining $\tilde x$ and $\hat x$ \textcolor{\changed}{from Definition \ref{def:well-separated}.} Then we have \textcolor{\changed}{by Definition \ref{def:eikonalpaths}, the well-separated property and choice of $\tilde x$, that
	\begin{displaymath}
	  D_{eik}(\hat x,F) \leq D_{eik}(\tilde x,F) + \int_{\gamma_{\tilde x \hat x}} \phi(\rho) ds \leq D_{eik}(\tilde x,F) + \inf_{\gamma \in \mathcal{U}_{\tilde x x_i}} \int_{\gamma} \phi(\rho) ds = D_{eik}(x_i,F) ,
	\end{displaymath}
	where $\mathcal{U}_{\tilde x x_i}$ denotes the set of paths starting at $\tilde x$ and ending at $x_i$.
      As $D_{eik}$ is continuous, on the set $\overline{B_i}$ it must obtain its maximal value. For any $x \in \partial B_i$ we have that $D_{eik}(x) \leq D_{eik}(x_i)$, which means that we can always find a maximizer of $D_{eik}$ in the interior of $B_i$. In turn $D_{eik}$ has a local maximizer in $B_i$. We note that such a minimizer need not be exactly $x_i$.}%\textcolor{green}{Limits of integration, ds in the integrals? $u$ vs $x$?} %\textcolor{\tochange}{Argue $\exists$ local max. inside (note that it might not be $x_i$)}

	%	\begin{displaymath}
	%	depth(\hat x) \leq depth(\tilde x) + cost(path on \partial B_i) \leq depth(\tilde x) + cost(optimal path connecting \tilde{x} to x_i) = depth(x_i) 
	%	\end{displaymath}
	
\end{proof}

We notice that while directly proving the well-separated property may be challenging, the idea is rather intuitive: if the mode is steep enough then it should be {\color{\changed}"cheaper"} to go around than to go through. The next example illustrates one setting where such a property can be established.

\begin{figure}[h!]
	\centering
	\begin{subfigure}[b]{0.49\textwidth}
		\centering
		\includegraphics[width=\textwidth]{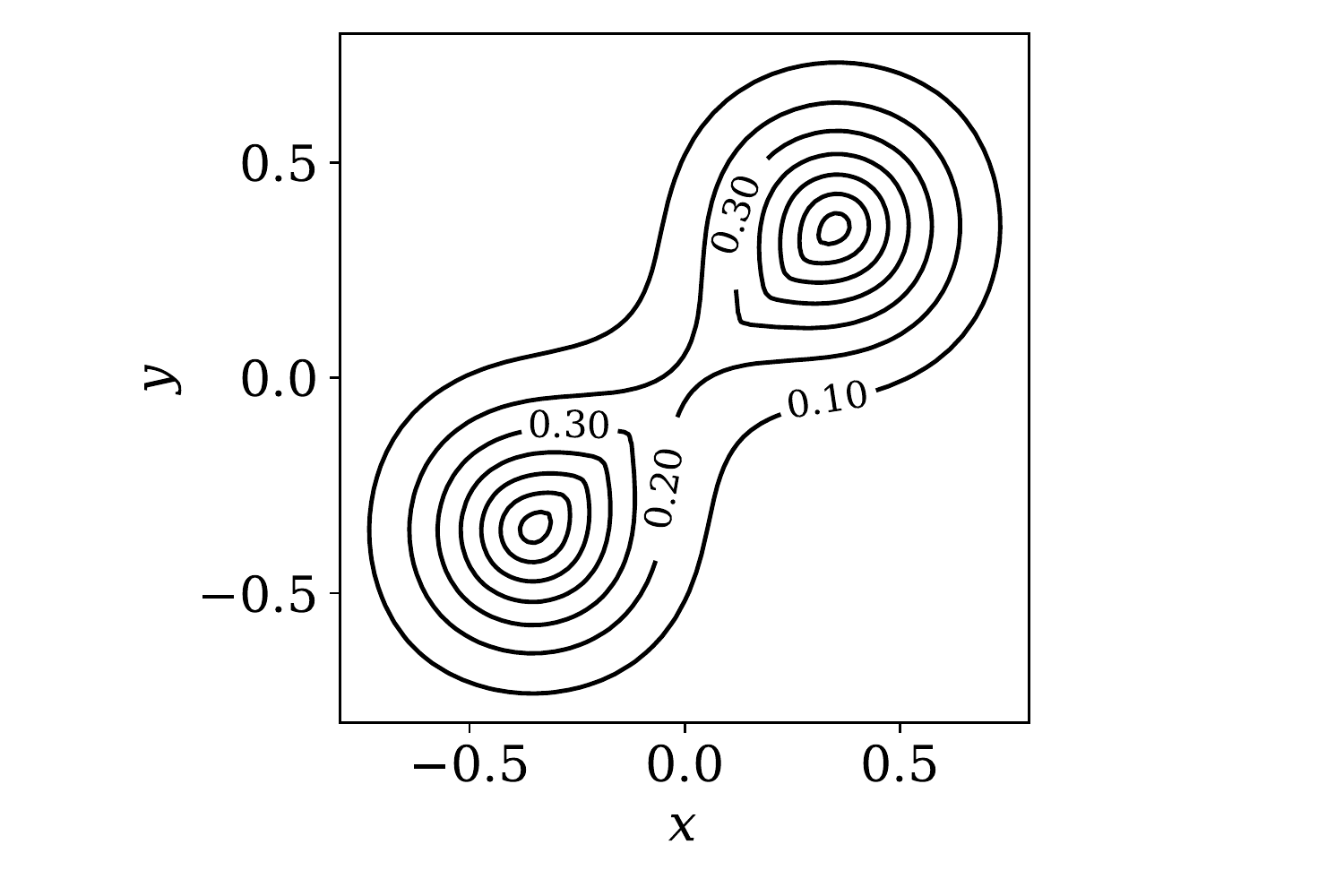}
		\caption{}
		\label{fig:Gaussians4standard}
	\end{subfigure}
	\hfill
	\begin{subfigure}[b]{0.49\textwidth}
		\centering
		\includegraphics[width=\textwidth]{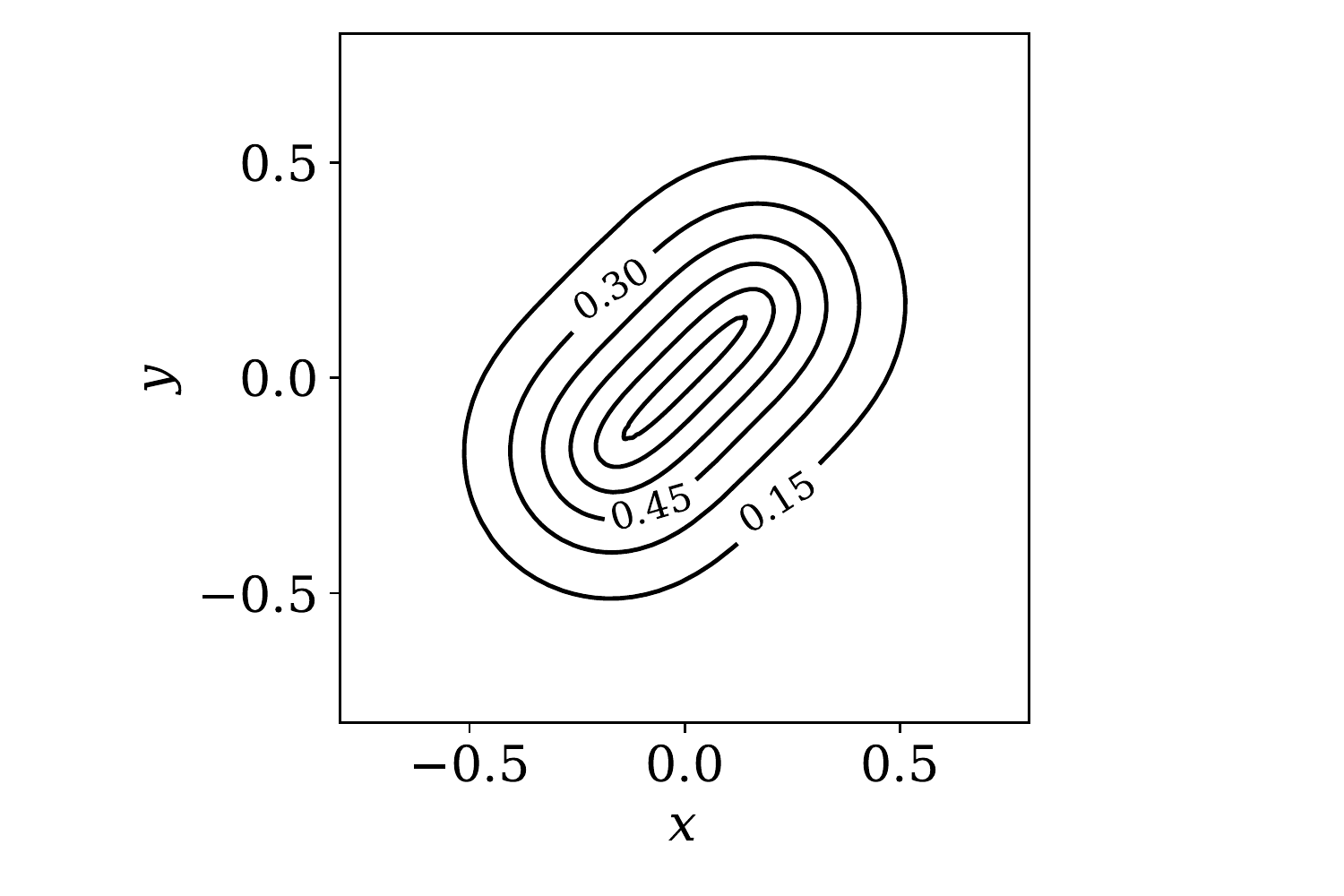}
		\caption{}
		\label{fig:Gaussians2standard}
	\end{subfigure}
	\caption{Figure \ref{fig:Gaussians4standard} shows some level sets of the eikonal depth for a Gaussian mixture model with means that are four standard deviations apart. The contribution of any of the components is negligible at two standard deviations from the other mean. Figure \ref{fig:Gaussians2standard} shows some level sets for the eikonal depth for a Gaussian mixture model with means that are two standard derivations apart. 
		\label{fig:separatedModes}
	}
\end{figure}

\begin{example}
	\label{ex:Gaussianmixture}
 Consider a Gaussian probability distribution in $\mathbb{R}^d$ with mean located at the origin and with covariance matrix $\sigma^2 I$, where $I$ is the identity matrix and $\sigma$ is a specified standard deviation. The probability density for this distribution is given by $\rho(x_1, \ldots , x_d)= \frac{1}{\sigma^d (2 \pi)^{d/2}} e^{-\frac{x_1^2+ \ldots + x_d^2}{2 \sigma^2}}$. We have that the cost of going around the mode along a circular arc of radius $r$ and angle $\theta$, which is parametrized as $\gamma(t)=(r \cos t, r \sin t)$ with $t \in [0,\theta]$ after a suitable rotation, is given by $\int_{0}^{\theta} \rho(\gamma(t)) |\dot \gamma (t)| d t =\frac{1}{\sigma^d (2 \pi)^{d/2}}\int_{0}^{\theta} e^{-\frac{r^2}{2 \sigma^2}} r d t = \frac{1}{\sigma^d (2 \pi)^{d/2}} r e^{-\frac{r^2}{2 \sigma^2}} \theta$. On the other hand, the cost of going from any point on the circular path to the mode can be calculated, after a suitable rotation, as the integral of $\rho$ along the path $\tau(t)=(t, \ldots, 0)$ with $t \in [0,r]$: $\int_{0}^{r} \rho(\tau(t)) |\dot \tau(t)| dt = \frac{1}{\sigma^d (2 \pi)^{d/2}} \int_{0}^{r}  e^{-\frac{s^2}{2 \sigma^2}} dt = \frac{1}{\sigma^{d-1} (2 \pi)^{d/2}} \sqrt{\frac{\pi}{2}} \mbox{erf} \left(\frac{r}{\sqrt{2} \sigma}\right)$.

%	For a Gaussian with identity covariance centered at zero, we can check that the cost of going around the mode along a path which is distance $r$ from the origin is going to be given by \textcolor{\changed}{$\frac{1}{2 \pi} r e^{-\frac{r^2}{2} \theta}$} \textcolor{\tochange}{what is $\theta$?} \textcolor{purple}{The full formula is $\frac{1}{(2 \pi)^{d/2}} r \phi(r) \theta$}, where $\phi$ is the standard normal distribution \textcolor{\tochange}{$\rho(r)=e^{-\frac{r^2}{2}}$}.

 %On the other hand, the cost of crossing \textcolor{\tochange}{make precise} will be given by $1-2(1-erf(r))$ \textcolor{purple}{$\frac{1}{(2 \pi)^{d/2}} \sqrt{\frac{\pi}{2}} \mbox{erf} \left(\frac{r}{\sqrt{2}}\right)$}. 
 
 {\color{\changed} In particular, for $d=2$, the two costs are $\frac{1}{2 \pi \sigma^2} r e^{-\frac{r^2}{2 \sigma^2}} \theta$ and $\frac{1}{2 \pi \sigma} \sqrt{\frac{\pi}{2}} \mbox{erf} \left(\frac{r}{\sqrt{2} \sigma}\right)$. These two quantities are comparable if $r \sim 2 \sigma$, as can be checked numerically. So we expect that, heuristically speaking, once we are two standard deviations away from the mean it is cheaper to go around than to go over the mode.}
	
	Now consider a Gaussian mixture model in $\mathbb{R}^2$. If each component of the mixture model has covariance $\sigma^2 I$, where $I$ is the identity matrix and $\sigma$ is a specified standard deviation, then the distribution will have well-separated modes if the contribution of one mixture component is negligible approximately two standard deviations away from the mean of any other mixture component. Thus we expect, again heuristically, that if components are more than four standard deviations apart then each mode should be identified \textcolor{\changed}{as a local "center point"} by the eikonal depth.
	
	Figure \ref{fig:separatedModes} shows a computational example, where we compute the eikonal depth \textcolor{\changed}{for two mixtures of Gaussian probability distributions with $\sigma=0.25$ in which the Gaussian distributions are}, respectively, four standard deviations apart and two standard deviations apart. In the former case the modes are well-separated, according to Definition \ref{def:well-separated}, and hence the depth has two local maxima. In the latter case the Gaussians are close enough together that the modes are not well-separated, and hence we only have one local maximum of the depth.
\end{example}

\textcolor{\changed}{We now address stability and uniqueness of the eikonal depth. It is desirable for a statistical depth to be uniquely associated to the considered set of probability distributions. In this regard, it is worth pointing out that the Tukey depth completely characterizes the input distribution for finite discrete measures \cite{koshevoy2002tukey,struyf1999halfspace} and among the class of rapidly decaying distributions (i.e. those which decay faster than exponential) \cite{koshevoy2003lift}. In some settings the Tukey depth is known not to characterize the underlying distribution \cite{nagy2021halfspace}. In contrast, the following results clearly quantify in what sense the eikonal depth and their underlying distributions form a one-to-one correspondence.}

\begin{proposition}
\label{prop:eikonalcontinuity}
	\textcolor{\changed}{The eikonal depth is continuous in the input distribution. If $\rho_1,\rho_2$ are supported on the closure of a bounded and open set $\Omega$} and satisfy bounds $0<c \leq \rho_1, \rho_2 \leq C$ then
	\[
	\|D_{eik}(x,\rho_1) - D_{eik}(x,\rho_2)\|_\infty \leq  \ell \|\rho_1 - \rho_2\|_\infty.
	\]
	where $\displaystyle \ell=\frac{\mbox{diam} (\Omega) \, C}{2 c}$.
\end{proposition}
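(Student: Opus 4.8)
The plan is to prove the Lipschitz estimate directly from the variational characterization in Definition~\ref{def:eikonalpaths}, exploiting the fact that on a bounded domain with $\rho_1,\rho_2$ bounded above and below, optimal paths have controlled length. Without loss of generality, assume $D_{eik}(x,\rho_2) \leq D_{eik}(x,\rho_1)$ at the point $x$ under consideration, so that it suffices to bound $D_{eik}(x,\rho_1) - D_{eik}(x,\rho_2)$ from above. Fix $\eta>0$ and choose a near-optimal path $\gamma \in \mathcal{U}_x$ for $\rho_2$, i.e. with $\int_\gamma \phi(\rho_2)\,ds \leq D_{eik}(x,\rho_2) + \eta$; here $\phi(s)=s$ by the statement's convention for the unnormalized depth (the estimate is stated with $\rho$ on the right-hand side of the eikonal equation, so we are in the case $\phi = \mathrm{id}$). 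Since $\gamma$ is admissible for $\rho_1$ as well, we get
\[
D_{eik}(x,\rho_1) - D_{eik}(x,\rho_2) \leq \int_\gamma \rho_1\,ds - \int_\gamma \rho_2\,ds + \eta = \int_\gamma (\rho_1 - \rho_2)\,ds + \eta \leq \|\rho_1-\rho_2\|_\infty \cdot \mathrm{length}(\gamma) + \eta.
\]

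The crux is therefore to bound $\mathrm{length}(\gamma)$ for a suitably chosen near-optimal $\gamma$. The point is that we need not take an arbitrary near-optimal path: we may first replace $\gamma$ by the straight-line segment from $x$ to the nearest point of $\partial\Omega$, whose length is at most $\mathrm{diam}(\Omega)$ and along which $\rho_2 \leq C$, giving a competitor of cost at most $C\,\mathrm{diam}(\Omega)$; hence $D_{eik}(x,\rho_2) \leq C\,\mathrm{diam}(\Omega)$. Now take $\gamma$ to be this straight segment itself (so $\eta$ is not even needed): then $\mathrm{length}(\gamma)\leq \mathrm{diam}(\Omega)$ and we would get a constant $C\,\mathrm{diam}(\Omega)$ rather than the claimed $\frac{\mathrm{diam}(\Omega)\,C}{2c}$. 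To recover the sharper constant with the $1/(2c)$ factor, one instead uses a genuinely near-optimal path for $\rho_2$ and bounds its length via the lower bound $\rho_2 \geq c$: since $\int_\gamma \rho_2\,ds \geq c\,\mathrm{length}(\gamma)$, any path $\gamma$ that is within a factor of being optimal satisfies $\mathrm{length}(\gamma) \leq \frac{1}{c}\int_\gamma \rho_2\,ds \approx \frac{1}{c} D_{eik}(x,\rho_2) \leq \frac{C\,\mathrm{diam}(\Omega)}{c}$; combining with the symmetric role of the two densities and averaging the two one-sided estimates produces the factor $\tfrac{1}{2}$, yielding $\ell = \frac{\mathrm{diam}(\Omega)\,C}{2c}$.

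Concretely I would organize it as: (i) show $D_{eik}(x,\rho_i)\leq C\,\mathrm{diam}(\Omega)$ by the straight-segment competitor; (ii) fix the point $x$ and the ordering so $D_{eik}(x,\rho_2)\leq D_{eik}(x,\rho_1)$; (iii) pick $\gamma$ $\eta$-optimal for $\rho_2$, bound its length by $\frac{1}{c}(D_{eik}(x,\rho_2)+\eta)$; (iv) plug into the chain of inequalities above to get $D_{eik}(x,\rho_1)-D_{eik}(x,\rho_2) \leq \frac{\|\rho_1-\rho_2\|_\infty}{c}(D_{eik}(x,\rho_2)+\eta) + \eta$; (v) let $\eta\to 0$, use step (i) to replace $D_{eik}(x,\rho_2)$ by $C\,\mathrm{diam}(\Omega)$, and finally take the supremum over $x$. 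The symmetric argument interchanging $\rho_1,\rho_2$ handles the other sign, and averaging gives the stated $\ell$. The main obstacle is purely bookkeeping: making sure the length bound is applied to a path that is near-optimal for the correct density and tracking constants so the final $\tfrac{1}{2c}$ (rather than $\tfrac{1}{c}$) emerges; there is no analytic subtlety since the comparison principle (Proposition~\ref{prop:boundeddomaincomp}) already guarantees $D_{eik}$ is the value function, so working directly with competitor paths is rigorous. One should also note the estimate is stated for $\phi=\mathrm{id}$; for a general Lipschitz $\phi$ the same argument goes through with an extra factor equal to the Lipschitz constant of $\phi$ on $[c,C]$, but that generalization is not needed here.
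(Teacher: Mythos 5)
Your overall strategy is the same as the paper's: take a (near-)optimal path for $\rho_2$, use it as a competitor for $\rho_1$, split $\int_\gamma \rho_1\,ds=\int_\gamma\rho_2\,ds+\int_\gamma(\rho_1-\rho_2)\,ds$, and bound the length of that path using the lower bound $\rho_2\geq c$ together with an a priori bound on $D_{eik}(x,\rho_2)$ coming from a straight segment to the boundary. The paper phrases the length bound in control language (travel time at most $C\,\mathrm{diam}(\Omega)/2$, speed at most $1/c$, hence length at most $\ell$), but that is the same computation as your steps (i)--(iv), with the paper using an exactly optimal trajectory instead of an $\eta$-optimal one.

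There is, however, a genuine flaw in how you obtain the stated constant: the factor $\tfrac12$ does \emph{not} come from ``averaging the two one-sided estimates.'' If the one-sided argument gives $D_{eik}(x,\rho_1)-D_{eik}(x,\rho_2)\leq A\,\|\rho_1-\rho_2\|_\infty$ and the symmetric argument gives the same bound with the roles reversed, you can only conclude $|D_{eik}(x,\rho_1)-D_{eik}(x,\rho_2)|\leq A\,\|\rho_1-\rho_2\|_\infty$; averaging two valid upper bounds never improves them, so as written your outline yields $\ell'=\frac{\mathrm{diam}(\Omega)\,C}{c}$, twice the claimed $\ell$. The correct source of the $\tfrac12$ is geometric: for any $x\in\Omega$ the nearest boundary point lies within distance $\mathrm{diam}(\Omega)/2$ (a line through $x$ hits $\partial\Omega$ on both sides, and the two hitting distances sum to at most $\mathrm{diam}(\Omega)$), so your step (i) should read $D_{eik}(x,\rho_2)\leq C\,\mathrm{diam}(\Omega)/2$, not $C\,\mathrm{diam}(\Omega)$. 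With that correction, step (iii) gives $\mathrm{length}(\gamma)\leq \frac{1}{c}\bigl(C\,\mathrm{diam}(\Omega)/2+\eta\bigr)$, and letting $\eta\to0$ produces exactly $\ell=\frac{\mathrm{diam}(\Omega)\,C}{2c}$, which is precisely how the paper's proof gets the constant. The rest of your argument (admissibility of the competitor path, the splitting of the integral, the $\eta\to0$ limit, and the symmetry in $\rho_1,\rho_2$) is sound.
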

\begin{proof}
{\color{\changed}
We start by finding a bound on the maximum distance that a particle can travel starting at a point in $\Omega$. Let us denote by $v_+$ and $v_-$ denote respectively the maximum and minimum velocity allowed by the distributions $\rho_1,\rho_2$. From the assumptions we have that $v_- \geq \frac{1}{C}$ and $v_+ \leq \frac{1}{c}$. The longest possible time a particle can travel before reaching $\partial \Omega$ is then $\frac{\mbox{diam}(\Omega)/2}{v_-} \leq C \frac{\mbox{diam}(\Omega)}{2}$. If we denote by $\ell$ the longest possible distance a particle can travel from a point in $\Omega$ then we have that $\ell \leq v_+ \cdot C \frac{\mbox{diam} (\Omega)}{2} = \frac{\mbox{diam}(\Omega) C}{2c}$.
	}
	
{Let $\gamma^*_2$ be an optimal trajectory associated to the distribution with density $\rho_2$, that is, $\gamma^*_2$ realizes the infimum in Definition \ref{def:eikonalpaths} and  $\gamma^*_2(0)=x, \gamma^*_2(b) \in \partial \Omega$ for some $b \in \mathbb{R}_+$. We have that
	\begin{align}
		D_{eik}(x,\rho_1) & = \inf_{\gamma \in \mathcal{U}_x}\int_{\gamma} \rho_1 ds = \int_0^b \rho_1(\gamma^*_2(t)) |\dot{\gamma}^*_2(t)|dt \\ & =  \int_0^b \left[ \rho_2 (\gamma^*_2(t)) + (\rho_1-\rho_2)(\gamma^*_2(t))\right] |\dot{\gamma}^*_2(t)| dt  \leq D_{eik}(x,\rho_2) +\ell  \| \rho_1 - \rho_2 \|_{\infty}
	\end{align}
}
\end{proof}

%\textcolor{purple}{There was an open question whether the Tukey depth completely characterizes the distribution. There ihs a counterexample by Nagy \cite{nagy2021halfspace} that shows this is not the case. Our eikonal depth does characterize the distribution, which is great. }
\begin{proposition}
   Assuming that $\phi$ is injective, then any probability distribution with a continuous probability density $\rho$ with support on an open and bounded domain $\Omega \subset \mathbb{R}^d$ or on $\Omega=\mathbb{R}^d$ %\textcolor{\tochange}{What do we mean here? Full support? Is it necessary?} 
  is uniquely determined by its $\phi$-eikonal depth. 
	\label{prop:depth-char-dist}
\end{proposition}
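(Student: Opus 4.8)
The plan is to show that the density can be read off, pointwise, from the eikonal depth, so that no two distinct continuous densities can share a depth function. Suppose $\rho_1$ and $\rho_2$ are continuous densities (on the same domain $\Omega$, bounded or equal to $\mathbb{R}^d$) with $D_{eik}(\cdot,\rho_1) = D_{eik}(\cdot,\rho_2) =: u$. By Definition~\ref{def:eikonalPop}, $u$ is simultaneously the viscosity solution of $|\nabla u| = \phi(\rho_1)$ and of $|\nabla u| = \phi(\rho_2)$, with the boundary condition $u=0$ on $\partial\Omega$ (respectively $u\to 0$ at infinity); uniqueness of these solutions is ensured by Proposition~\ref{prop:boundeddomaincomp} in the bounded case and by Proposition~\ref{prop:comparisonunbounded} together with the explicit bounded supersolution \eqref{eq:boundedsupersolution} when $\Omega=\mathbb{R}^d$, so this identification is legitimate. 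Since each $\rho_i$ is continuous, $\phi(\rho_i)$ is locally bounded, hence $u$ is locally Lipschitz, and by Rademacher's theorem it is differentiable Lebesgue-almost everywhere on $\Omega$.

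The crucial step is that a viscosity solution of the eikonal equation satisfies the equation in the classical sense at every point of differentiability: if $u$ is differentiable at $x$, then both the subsolution and the supersolution inequalities of Definition~\ref{def:viscositysolution} hold at $x$, giving $|\nabla u(x)| = \phi(\rho_i(x))$. (This is standard; see e.g.\ \cite{bardi2008optimal}. Alternatively it follows from the dynamic programming principle applied to the control formulation \eqref{eq:energyEikonal}, since replacing an optimal trajectory by one that moves straight out of a small ball shows $u$ decays at unit rate in the density-weighted metric.) Applying this to both $\rho_1$ and $\rho_2$ yields $\phi(\rho_1(x)) = |\nabla u(x)| = \phi(\rho_2(x))$ for a.e.\ $x \in \Omega$. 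Since $\phi$ is injective, $\rho_1(x) = \rho_2(x)$ for a.e.\ $x$, and since $\rho_1,\rho_2$ are continuous, $\rho_1 \equiv \rho_2$ on $\Omega$; hence the two distributions coincide. The same argument gives a reconstruction formula: $\phi\circ\rho$ is the unique continuous function agreeing a.e.\ with $|\nabla u|$, so $\rho = \phi^{-1}\circ(\phi\circ\rho)$ is completely determined by $u$.

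The main obstacle is the second paragraph: cleanly invoking the fact that the viscosity solution satisfies $|\nabla u| = \phi(\rho)$ at points of differentiability, and pinning down enough regularity of $u$ (local Lipschitz continuity) for the phrase ``almost everywhere'' to carry weight. Once that is in place, the conclusion is immediate from injectivity of $\phi$ and continuity of the densities; in particular no growth or decay hypothesis on $\rho$ beyond continuity is needed, in contrast with the characterization results known for the Tukey depth.
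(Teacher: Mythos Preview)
Your proof is correct and follows essentially the same route as the paper: both arguments rest on the fact that a viscosity solution of the eikonal equation satisfies $|\nabla u|=\phi(\rho)$ classically at almost every point, so that injectivity of $\phi$ forces $\rho_1=\rho_2$ a.e., and continuity upgrades this to equality everywhere. Your version is in fact a bit more explicit than the paper's---you spell out the local Lipschitz regularity via Rademacher and pass through a.e.\ equality before invoking continuity, whereas the paper argues by contradiction at a single common point of differentiability inside the open set $\{\rho_1\neq\rho_2\}$---but the substance is identical.
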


\begin{proof}
  \textcolor{\changed}{Assume that we have to distinct probability distributions $F_1,F_2$ with continuous densities $\rho_1,\rho_2$ such that $D_{eik}(x,F_1)=D_{eik}(x,F_2)$.} Any viscosity solution to the eikonal equation will satisfy the equation classically at almost every point \textcolor{\changed}{in its domain} \cite{bardi2008optimal}. {\color{\changed} Hence  $D_{eik}(x,F_1)$ and $D_{eik}(x,F_2)$ will have a common point of differentiability at a point where $\rho_1(x) \neq \rho_2(x)$} \textcolor{\changed}{By the injectivity of $\phi$, this implies that the the gradients of $D_{eik}(x,F_1)$ and $D_{eik}(x,F_2)$  will not match at $x$ and therefore the depths cannot be the same}.
\end{proof}

\textcolor{\changed}{Propositions \ref{prop:eikonalcontinuity} and \ref{prop:depth-char-dist}} have established a type of robustness of the eikonal depth. The following proposition establishes a scenario where the eikonal depth is not robust.

\begin{proposition}\label{prop:no-breakdown}
	Let $\rho$ be a continuous density function, and suppose that $\phi(s) = s^\alpha$  with $\alpha \geq 1/d$. Then by modifying an arbitrarily small amount of density we can modify the eikonal depth at a single point by \textcolor{\changed}{any desired} amount.% If we modify only measure $p$ points of the distribution, then the eikonal depth at any point may only increase by at most $p$.
	\label{prop:upper-stability}
\end{proposition}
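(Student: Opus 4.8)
The plan is to exploit the fact that, under the assumption $\alpha \geq 1/d$, a thin ``tube'' of added density can be made to cost an arbitrarily large (or, with a different modification, arbitrarily small) amount along the optimal path to a target point, while the total added mass is controlled. The critical scaling computation from Proposition \ref{prop:scale-invariant} is the guiding principle: concentrating mass $m$ into a ball of radius $\epsilon$ produces density of order $m \epsilon^{-d}$, so a segment of length $\sim \epsilon$ through such a ball contributes $\sim \phi(m\epsilon^{-d}) \cdot \epsilon = m^\alpha \epsilon^{1 - \alpha d}$ to the cost; since $\alpha d \geq 1$, the exponent $1 - \alpha d \leq 0$, so this blows up (or stays bounded away from $0$) as $\epsilon \to 0$ even though $m$ can be taken arbitrarily small.

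First I would fix the point $x$ at which we wish to change the depth and let $\gamma^*$ be an optimal path in $\mathcal{U}_x$ for the original density $\rho$, with $D_{eik}(x,F) = J(\gamma^*)$. To \emph{increase} the depth: pick a small ball $B(y,\epsilon)$ centered at a point $y$ on $\gamma^*$ (away from the endpoint), and add to $\rho$ a smooth bump of total mass $m$ supported in $B(y,\epsilon)$, normalizing by dividing by $(1+m)$ so that the result is a probability density; choose $\rho$-mass removed to be $o(1)$. Any competitor path from $x$ must still reach infinity (or $\partial\Omega$); I would argue that every such path either passes through $B(y,\epsilon)$, incurring cost at least $c\, m^\alpha \epsilon^{1-\alpha d}$ from the bump, or detours around it at a cost that can also be bounded below — but this is the subtle point (see below). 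The cleanest route is instead to place a thin \emph{slab} or spherical shell of added density that genuinely separates $x$ from the escape region, so that \emph{every} admissible path must cross it; then the added cost is at least (thickness)$\times \phi$(slab density), and by the scaling above this is $\gtrsim m^\alpha t^{1-\alpha d}$ for a slab of thickness $t$, which $\to \infty$ as $t \to 0$ with $m \to 0$ suitably. This forces $D_{eik}(x, \tilde F)$ as large as desired. To \emph{decrease} the depth: carve a thin tube of reduced density (push $\rho$ down to near $0$ along a thin curve from $x$ out to the escape region) — here one removes mass rather than adds it, and the cost along the tube is $\phi$ of something near $0$, which vanishes, so the depth drops to nearly $0$; the removed mass is the tube's volume times $\sup\rho$, which is small since the tube is thin.

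The main obstacle is handling the paths that \emph{avoid} the modified region: adding a localized bump does not by itself raise the infimum, since the optimal path can simply route around the bump at essentially the original cost. This is why the construction must use a modification whose support \emph{topologically separates} $x$ from the set where trajectories are required to terminate — a slab across a bounded $\Omega$, or a closed hypersurface enclosing $x$ in the $\mathbb{R}^d$ case — so that crossing it is unavoidable. Once separation is guaranteed, the estimate is the one-line scaling computation; the only care needed is to check that the enclosing hypersurface has bounded $(d-1)$-measure so that a thin shell of thickness $t$ has volume $O(t)$, keeping the added (or removed) mass small, and to verify that the normalization by $(1 \pm m)$ perturbs the depth elsewhere only by $O(m)$ via Proposition \ref{prop:eikonalcontinuity}. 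I would present the slab construction on bounded $\Omega$ in detail and remark that the $\mathbb{R}^d$ case is analogous using a large sphere around $x$.
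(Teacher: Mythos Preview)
Your scaling computation for the separating slab is wrong, and this breaks the argument for the whole range $1/d \le \alpha \le 1$. In your first paragraph you correctly compute that a \emph{ball} of radius $\epsilon$ holding mass $m$ has density $\sim m\epsilon^{-d}$, giving crossing cost $\sim m^\alpha \epsilon^{1-\alpha d}$. But when you pass to a slab or shell of thickness $t$ and bounded $(d-1)$-dimensional area $A$, the volume is $\sim At$, the density is $\sim m/(At)$, and the crossing cost is $t\cdot(m/(At))^\alpha = A^{-\alpha} m^\alpha t^{1-\alpha}$, \emph{not} $m^\alpha t^{1-\alpha d}$. This blows up as $t\to 0$ only when $\alpha>1$, so your separating-barrier construction establishes the proposition only in that restricted range. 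The factor of $d$ in the exponent is only available when you concentrate mass in something $d$-dimensionally small, i.e.\ a ball; but a single ball away from $x$ fails to separate, as you yourself noted.

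The paper resolves this with a simple observation you overlooked: center the ball at $x$ itself. Any admissible path \emph{starts} at $x$ and must exit $B(x,\epsilon)$, so it necessarily traverses length at least $\epsilon$ through the high-density region, and the ball scaling $m^\alpha \epsilon^{1-\alpha d}\to\infty$ applies directly for all $\alpha>1/d$. At the critical exponent $\alpha=1/d$ this still gives only $m^{1/d}$, which is bounded; the paper handles that endpoint separately by adding, at $x$, a rescaled copy of the truncated profile $|x|^{-d}$ used in the second part of Proposition~\ref{prop:median-UB}, whose normalized-depth contribution is of order $m^{1/d}(-\ln\epsilon)^{1-1/d}$ and hence can be made arbitrarily large by sending $\epsilon\to 0$ after fixing $m$ small. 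Your carving argument for decreasing the depth is fine and matches the paper.
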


\begin{proof}
	We can decrease the depth at the point $x$ by carving a very thin path to that point. For $\alpha > 1/d$ we can increase the depth at a point arbitrarily by adding mass in a small enough ball centered at $x$. For $\alpha=1/d$ we truncate the profile $|x|^{-d}$, as in the second part of Proposition \ref{prop:median-UB}.
	%Idea: There will be many paths near the original optimal one, and we can't simultaneously mess up all of them. 
\end{proof}

While the previous proposition does establish a type of non-robustness, we notice that the types of modifications we made in the proof would only modify the depth in small regions. It seems unlikely that small modifications to the distribution would be able to modify the depth in large regions, but we do not pursue proving such a property here.

\textcolor{\changed}{To wrap up this section, we also give two conceptual properties, related to the optimal control formulation of the eikonal depth, which allow us to bound and to simplify computation of the same. These properties do not have do not have immediate analogs for other depth functions, but we believe they are useful and worth mentioning. First, in many settings it is possible to give upper or lower bounds on probability densities, and it seems natural to try to use those to provide bounds on the associated depth functions. The following proposition gives a comparison principle for the eikonal depths that relates upper and lower bounds for the eikonal depth to upper and lower bounds for the probability distribution. This proposition cannot be directly applied to two probability densities, as one cannot have a global inequality on such densities. However, using the scaling invariance property from Proposition \ref{prop:scale-invariant}, in some settings one can use the following proposition to provide upper and lower bounds on eikonal depths.}

\begin{proposition}
	The eikonal depth satisfies a comparison principle in the sense that if $\rho_1 (x) \leq \rho_2 (x)$ for all $x \in \mathbb{R}^d$ then $D_{eik}(x,\rho_1) \leq D_{eik}(x,\rho_2)$ for all $x \in \mathbb{R}^d$. 
	\label{prop:comparison}
\end{proposition}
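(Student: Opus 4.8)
The plan is to use the optimal control (path-integral) formulation of the eikonal depth from Definition \ref{def:eikonalpaths}, rather than the PDE formulation, since the comparison between path integrals is immediate and does not require invoking the comparison principle for viscosity solutions. Fix a point $x$ and recall that
\[
D_{eik}(x,\rho_i) = \inf_{\gamma \in \mathcal{U}_x} \int_\gamma \phi(\rho_i)\,ds.
\]
The key observation is that the admissible class $\mathcal{U}_x$ of curves does not depend on $\rho$ in the case $\mbox{supp}\,(\rho_1) = \mbox{supp}\,(\rho_2) = \mathbb{R}^d$ (curves to spatial infinity), so we are comparing two infima of integral functionals over the \emph{same} set of curves.

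First I would note that since $\phi$ is non-decreasing and $\rho_1(x) \le \rho_2(x)$ pointwise, we have $\phi(\rho_1(y)) \le \phi(\rho_2(y))$ for every $y \in \mathbb{R}^d$. Hence for every fixed curve $\gamma \in \mathcal{U}_x$,
\[
\int_\gamma \phi(\rho_1)\,ds = \int_0^\infty \phi(\rho_1(\gamma(t)))\,|\dot\gamma(t)|\,dt \le \int_0^\infty \phi(\rho_2(\gamma(t)))\,|\dot\gamma(t)|\,dt = \int_\gamma \phi(\rho_2)\,ds = J_2(\gamma).
\]
Second, taking the infimum over $\gamma \in \mathcal{U}_x$ on the left-hand side gives $D_{eik}(x,\rho_1) \le J_2(\gamma)$ for every $\gamma \in \mathcal{U}_x$; then taking the infimum over $\gamma$ on the right-hand side yields $D_{eik}(x,\rho_1) \le \inf_{\gamma} J_2(\gamma) = D_{eik}(x,\rho_2)$. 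Since $x$ was arbitrary, this proves the claim.

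There is essentially no main obstacle here; the only point requiring a word of care is the bounded-support case, where the admissible curves are required to reach $\partial\Omega$. If both densities have the same support $\overline\Omega$ the argument above goes through verbatim with $\mathcal{U}_x$ the set of curves from $x$ to $\partial\Omega$. If one instead wished to compare a compactly supported density with one supported on all of $\mathbb{R}^d$, the statement should be read with the convention that $\phi(\rho)$ is extended by its value on the (empty) complement, or one restricts to the common-support setting; I would simply state the proof for the case of a common support (either $\mathbb{R}^d$ or a fixed $\overline\Omega$), which is the situation in which the inequality $\rho_1 \le \rho_2$ pointwise on $\mathbb{R}^d$ is meaningful. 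A one-line remark that the same monotonicity argument also shows $D_{eik}(\cdot,\rho_1) \le D_{eik}(\cdot,\rho_2)$ via the PDE formulation — comparing a subsolution of $|\nabla u| = \phi(\rho_1)$ against the solution of $|\nabla u| = \phi(\rho_2)$ and invoking Proposition \ref{prop:comparisonunbounded} or \ref{prop:boundeddomaincomp} — could be included for completeness, but the path-integral proof is cleaner and self-contained.
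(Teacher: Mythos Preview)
Your proof is correct and follows essentially the same approach as the paper: both use the control (path-integral) formulation from Definition~\ref{def:eikonalpaths}, observe that the cost functional along any admissible curve is monotone in the density, and conclude by taking the infimum over $\mathcal{U}_x$. Your write-up is more detailed (and correctly highlights that $\mathcal{U}_x$ must be common to both densities), but the underlying argument is identical.
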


\begin{proof}
	This is immediate from the control formulation of the eikonal depth, in the sense that if $\rho_1 \leq \rho_2$ then the cost associated with any $u \in \mathcal{U}_x$ will be greater when replacing $\rho_1$ with $\rho_2$ in \eqref{eq:energyEikonal}, and hence the infimums will satisfy the same inequality.
\end{proof}

\textcolor{\changed}{Next we give a basic property of the eikonal depth, which is fundamental for numerical approximation schemes. Heuristically, this property implies that the eikonal depth can be determined using the value of the density at $x$ and the value of the depth at neighboring points, which permits us to construct efficient, local, numerical schemes.}

	\begin{proposition}
	\label{prop:templabel}
		Let $U\subset \R^d$ and suppose that we know the values of $D_{eik}$ on the boundary of $U$. Then the values of $D_{eik}$ can be determined using only the density of $\rho$ inside $U$.
		\label{prop:DPP}
	\end{proposition}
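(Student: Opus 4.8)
The plan is to exploit the optimal control (path-based) formulation of the eikonal depth from Definition \ref{def:eikonalpaths}, together with the dynamic programming principle which is implicit in that formulation. The key observation is that the value $D_{eik}(x,F)$ for $x \in U$ is an infimum over curves $\gamma$ escaping to $\partial\Omega$ (or to infinity), and that any such curve, if it starts at an interior point of $U$, must cross $\partial U$ before it can leave $U$. First I would argue that for $x \in U$ we have the identity
\begin{equation}
  D_{eik}(x,F) = \inf_{\substack{\eta\colon [0,s]\to \overline{U}\\ \eta(0)=x,\ \eta(s)\in\partial U}} \left( \int_\eta \phi(\rho)\,ds + D_{eik}(\eta(s),F) \right),
  \label{eq:DPP-identity}
\end{equation}
where the inner infimum is taken over curves of locally finite length that remain in $\overline{U}$ until they hit $\partial U$. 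The right-hand side depends only on $\rho|_U$ and on the prescribed values of $D_{eik}$ on $\partial U$, which is exactly the claim.

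The two inequalities in \eqref{eq:DPP-identity} are proven separately. For ``$\leq$'': given any admissible $\eta$ from $x$ to a point $y=\eta(s)\in\partial U$, and given any near-optimal competitor curve $\gamma$ for $D_{eik}(y,F)$ (which exists by the remark after Definition \ref{def:eikonalpaths} that the infimum is attained, or by an $\varepsilon$-argument), the concatenation $\eta \ast \gamma$ is an admissible curve in $\mathcal{U}_x$, so $D_{eik}(x,F) \le \int_\eta \phi(\rho)\,ds + \int_\gamma \phi(\rho)\,ds$; taking the infimum over $\gamma$ and then over $\eta$ gives the bound. For ``$\geq$'': take a near-optimal $\gamma \in \mathcal{U}_x$ for $D_{eik}(x,F)$; since $\gamma(0)=x$ is interior to $U$ and $\gamma$ eventually leaves $\Omega$ (hence leaves $U$), by continuity there is a first exit time $s$ with $\gamma(s)\in\partial U$ and $\gamma([0,s])\subset\overline U$. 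Splitting the integral $\int_\gamma \phi(\rho)\,ds$ at $s$, the first piece is the cost of an admissible $\eta$ and the tail $\gamma|_{[s,\infty)}$ is an admissible competitor for $D_{eik}(\gamma(s),F)$, so $\int_\gamma \phi(\rho)\,ds \ge \int_\eta \phi(\rho)\,ds + D_{eik}(\gamma(s),F)$, which is at least the right-hand side of \eqref{eq:DPP-identity}; let the near-optimality parameter go to zero.

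The main technical obstacle is the handling of the ``first exit time'' and the measurability/regularity of the concatenated curves: one must check that a curve of locally finite length has a well-defined first hitting time of the closed set $\R^d \setminus U$ (or of $\partial U$), that the restriction to $[0,s]$ is still of locally finite length with the line integral splitting additively, and that concatenations remain admissible curves in the relevant class. These are standard facts about rectifiable/locally-finite-length curves, so I would state them as such with a one-line justification (e.g. arc-length reparametrization makes $\gamma$ Lipschitz, and hitting times of closed sets by continuous curves are well-defined) rather than belaboring them. A secondary point worth a sentence is the unbounded-support case: there $\gamma$ need not ``exit'' $U$ if $U$ is unbounded, but the proposition is only asserted for the situation where $D_{eik}$ is known on $\partial U$, which is the useful case when $U$ is bounded; if one wants the unbounded case one simply notes that paths escaping to infinity through an unbounded $U$ are already handled by the boundary condition at infinity. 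I expect the clean way to present all of this is to phrase it as: the eikonal depth restricted to $\overline U$ is the unique viscosity solution of $|\nabla u| = \phi(\rho)$ in $U$ with the given boundary data on $\partial U$ (invoking Proposition \ref{prop:boundeddomaincomp}), which both proves the claim and makes the connection to the local numerical schemes explicit.
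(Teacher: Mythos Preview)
Your proposal is correct and follows precisely the route the paper takes: the paper's own justification is a single sentence invoking the dynamic programming principle (``any optimal path beginning at a point in the interior of $U$ will also be optimal from the time it leaves $U$ onward''), and your argument simply unpacks that sentence into the explicit two-inequality DPP identity with the concatenation/splitting of paths. The additional care you take with first exit times, rectifiable curves, and the viscosity-solution reformulation via Proposition~\ref{prop:boundeddomaincomp} goes beyond what the paper writes but is entirely in the same spirit.
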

	The proof of this proposition is immediate from the dynamic programming principle: any optimal path beginning at a point in the interior of $U$ will also be optimal from the time it leaves $U$ onward. \textcolor{\changed}{ Indeed, we already used this idea in the proof of Proposition \ref{prop:well-separated}. This simple observation also factors prominently in the construction of numerical methods, see Section \ref{sec:numerics}.
}

%{\color{purple}
%	\begin{example}
%		If we can bound a distribution from above and below by scaled versions of a distribution with known eikonal depth, then we can bound the eikonal depth of that distribution from above and below\dots
%	\end{example}
%}

	\subsection{Isometric robustness}
\label{sec:robustness}
	\begin{proposition}
	  Let $\Phi:X \to X$ be an invertible, $C^1$ mapping so that
	  \[
	   \textcolor{\changed}{ \|D\Phi-I\|_{\infty} \leq \e.}
	  \]
	  Let $\tilde \rho = \Phi_\sharp \rho$\textcolor{\changed}{, that is $\int_B \tilde{\rho}(y) dy= \int_{\Phi^{-1}(B)}\rho (x) dx$}. Then 
	  \begin{equation}
(1-\e) D_{eik}(x,\rho) \leq D_{eik}(\Phi(x),\tilde \rho) \leq (1+\e) D_{eik}(x,\rho), \mbox{ for all } x \in \mathbb{R}^d
	    \label{eqn:iso-robust}
	  \end{equation}\label{prop:iso-robust}
	\end{proposition}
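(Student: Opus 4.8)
The plan is to work directly with the optimal--control formulation of Definition~\ref{def:eikonalpaths} and to transport admissible curves through $\Phi$. First I would record the elementary consequences of $\|D\Phi - I\|_\infty \le \e$ (we may assume $\e<1$, the regime of interest): integrating $D\Phi-I$ along line segments gives the global bi--Lipschitz bound $(1-\e)|a-b| \le |\Phi(a)-\Phi(b)| \le (1+\e)|a-b|$, so, together with invertibility of $\Phi$ and of $D\Phi$, $\Phi$ is a $C^1$ diffeomorphism of $\R^d$ with $|\Phi(x)|\to\infty$ exactly when $|x|\to\infty$; in the compactly supported case $\Phi$ is a homeomorphism of $\overline\Omega$ onto $\overline{\Phi(\Omega)}=\mathrm{supp}\,\tilde\rho$ carrying $\partial\Omega$ onto $\partial\Phi(\Omega)$. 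Consequently $\gamma \mapsto \Phi\circ\gamma$ is a bijection from the path family $\mathcal{U}_x$ associated with $\rho$ onto the path family $\mathcal{U}_{\Phi(x)}$ associated with $\tilde\rho$: it preserves the initial point $\Phi(\gamma(0))=\Phi(x)$, the escape/terminal condition, and, by the bi--Lipschitz bound, local finiteness of length.

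Next I would push forward the cost. For $\gamma\in\mathcal{U}_x$ and $\tilde\gamma=\Phi\circ\gamma$ one has $\dot{\tilde\gamma}(t)=D\Phi(\gamma(t))\,\dot\gamma(t)$ (classically where $\dot\gamma$ exists, in the measure sense otherwise), while the change of variables formula for $\tilde\rho=\Phi_\sharp\rho$ gives the continuous density $\tilde\rho(\Phi(y))=\rho(y)/|\det D\Phi(y)|$. Hence
\[
  J_{\tilde\rho}(\tilde\gamma)\;=\;\int \phi\!\Big(\tfrac{\rho(\gamma(t))}{|\det D\Phi(\gamma(t))|}\Big)\,\big|D\Phi(\gamma(t))\dot\gamma(t)\big|\,dt .
\]
The crux is a pointwise comparison of this integrand with $\phi(\rho(\gamma(t)))\,|\dot\gamma(t)|$. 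The length factor is immediate: $\|D\Phi-I\|_\infty\le\e$ forces $(1-\e)|v|\le|D\Phi(y)v|\le(1+\e)|v|$ for every $y,v$. For the density factor I would specialize to the normalized depth $\phi(s)=s^{1/d}$ (for a general nondecreasing $\phi$ one genuinely needs $\Phi$ to be close to volume preserving, since the offset $\rho\mapsto\rho/|\det D\Phi|$ can otherwise move $\rho$ across a region where $\phi$ grows arbitrarily fast): then $\phi(\rho/|\det D\Phi|)=\phi(\rho)\,|\det D\Phi|^{-1/d}$, and since $D\Phi=I+E$ with $\|E\|\le\e$ all singular values of $D\Phi$ lie in $[1-\e,1+\e]$, so their geometric mean $|\det D\Phi|^{1/d}$ does too. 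Combining the two factors gives the target estimate $(1-\e)\,J_\rho(\gamma)\le J_{\tilde\rho}(\Phi\circ\gamma)\le(1+\e)\,J_\rho(\gamma)$ for every $\gamma\in\mathcal{U}_x$, where $J_\rho(\gamma)=\int_\gamma\phi(\rho)\,ds$.

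Finally I would take the infimum over $\gamma\in\mathcal{U}_x$ and use the curve bijection $\gamma\leftrightarrow\Phi\circ\gamma$ from the first step to identify $\inf_\gamma J_{\tilde\rho}(\Phi\circ\gamma)$ with $D_{eik}(\Phi(x),\tilde\rho)$, which converts the two--sided estimate into \eqref{eqn:iso-robust}. I expect the pointwise step to be the only real obstacle: the length dilation and the density offset each contribute a factor of the shape $1\pm\e$ but in opposite senses, so some care (and the specific normalization $\phi(s)=s^{1/d}$) is needed to see that they collapse into the single factor $1\pm\e$ rather than something like $\tfrac{1+\e}{1-\e}$; as a sanity check, when $\Phi$ is a genuine local isometry ($D\Phi(x)\in SO(d)$) one has $|\det D\Phi|\equiv 1$ and $|D\Phi v|\equiv|v|$, so the cost is transported exactly and \eqref{eqn:iso-robust} holds with no error term. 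The diffeomorphism property, the curve bijection, and the change of variables are all routine.
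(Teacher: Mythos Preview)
Your overall strategy---push a path $\gamma$ forward to $\Phi\circ\gamma$, compare the two costs pointwise, and take the infimum using the bijection $\mathcal{U}_x\leftrightarrow\mathcal{U}_{\Phi(x)}$---is exactly the paper's approach. The paper's proof is in fact shorter than yours: it writes
\[
  \int \phi(\tilde\rho(\Phi(\gamma_1)))\,|D\Phi\,\dot\gamma_1|\,dt=\int \phi(\rho(\gamma_1))\,|D\Phi\,\dot\gamma_1|\,dt,
\]
i.e.\ it simply replaces $\tilde\rho\circ\Phi$ by $\rho$ and then uses only the length bound $(1-\e)|\dot\gamma_1|\le|D\Phi\,\dot\gamma_1|\le(1+\e)|\dot\gamma_1|$ to obtain the clean factors $1\pm\e$, for arbitrary $\phi$.

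You were more scrupulous: for the pushforward density one has $\tilde\rho(\Phi(y))=\rho(y)/|\det D\Phi(y)|$, not $\rho(y)$, and you tried to absorb the missing Jacobian by specializing to $\phi(s)=s^{1/d}$. Your suspicion that the two factors need not collapse to exactly $1\pm\e$ is correct: with $\phi(s)=s^{1/d}$ the combined factor is $|D\Phi\,\dot\gamma|\,/\,(|\dot\gamma|\,|\det D\Phi|^{1/d})$, which lies between $\sigma_{\min}/(\prod_i\sigma_i)^{1/d}$ and $\sigma_{\max}/(\prod_i\sigma_i)^{1/d}$ for the singular values $\sigma_i\in[1-\e,1+\e]$ of $D\Phi$, and in general this range is only contained in $[\tfrac{1-\e}{1+\e},\tfrac{1+\e}{1-\e}]$, not $[1-\e,1+\e]$. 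So the discrepancy you flagged is real: as written the paper's argument silently drops the Jacobian, and with it included one obtains the qualitatively identical but slightly weaker constant $\tfrac{1\pm\e}{1\mp\e}=1+O(\e)$ (or, alternatively, the stated bound holds verbatim if $\tilde\rho$ is read as the transported function $\rho\circ\Phi^{-1}$ rather than the pushforward density). Your proof sketch is otherwise complete and matches the paper step for step.
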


	\begin{proof}
{ 
	  Consider a path $\gamma_1(t)$ satisfying $\gamma_1(0) = x$ and $\gamma \to \infty$, and define a second path $\gamma_2(t) = \Phi(\gamma_1(t))$. We may readily compute 
	  \begin{align}
	    &\int_{0}^{\infty} \phi( \tilde \rho(\gamma_2(t)) )|\dot \gamma_2(t)| \,dt \\
	    &= \int_{0}^{\infty} \phi( \tilde \rho(\Phi(\gamma_1(t)) ) |D\Phi (\gamma_1(t)) \dot \gamma_1(t)| \,dt \\
	      &= \int_{0}^{\infty} \phi( \rho(\gamma_1(t)) ) |D\Phi (\gamma_1(t)) \dot \gamma_1(t)| \,dt,
	    \label{eqn:COV-robust}
	  \end{align}
	  where the last line follows because of the definition of $\gamma_2$ and $\tilde{\rho}$. By using the assumption on $|D\Phi|$, we immediately have that $(1-\e)|\dot \gamma_1| \leq |D\Phi (\gamma_1(s)) \dot \gamma_1(s)| \leq (1+\e) |\dot \gamma_1|$, which readily implies that
	  \begin{displaymath}
	    (1-\e) \int_{0}^{\infty} \phi(\rho(\gamma_1(s)) )|\dot \gamma_1(s)| \,ds \leq \int_{0}^{\infty} \phi( \tilde \rho(\gamma_2(t)) )|\dot \gamma_2(t)| \,dt \leq (1+\e) \int_{0}^{\infty} \phi( \rho(\gamma_1(s)) )|\dot \gamma_1(s)| \,ds.
	  \end{displaymath}
	  By taking the infimum over paths we then obtain that, \textcolor{\changed}{for all $x \in \mathbb{R}^d$,}
	  \[
	    (1-\e) D_{eik}(x,\rho) \leq D_{eik}(\Phi(x),\tilde \rho) \leq (1+\e) D_{eik}(x,\rho),
	  \]
	  as desired.
}
	\end{proof}

	The fact that the eikonal depth is isometrically robust is a consequence of the \textcolor{\changed}{inherent metric nature of its definition}. Our viewpoint is that the eikonal depth is a measure of centrality or outliers in metric geometry in the same sense that the Tukey depth is a notion of centrality for convex geometry. Indeed, the very definitions, which are, respectively, based upon path length versus supporting halfspaces, are direct consequences of the underlying geometric viewpoint.
	
	The notion of isometric robustness that we propose is quite different from the notion of breakdown point used in classical robust statistics. However, it is, in spirit, much closer to the study of distributionally robust optimization problems \cite{chen2020robust}, or more generally adversarial training problems in statistics. In those contexts, one typically solves an optimization problem of the form
%	@article{OPT-026,
%url = {http://dx.doi.org/10.1561/2400000026},
%year = {2020},
%volume = {4},
%journal = {Foundations and Trends® in Optimization},
%title = {Distributionally Robust Learning},
%doi = {10.1561/2400000026},
%issn = {2167-3888},
%number = {1-2},
%pages = {1-243},
%author = {Ruidi Chen and Ioannis Ch. Paschalidis}
%}
	\[
	\min_{\theta \in \Theta} \sup_{d(\mu,\tilde \mu) \leq \e} \mathbb{E}_{\tilde \mu} R(\theta).
	\] 
	Here $\mu$ is an underlying data distribution, for example associated with a classification or regression problem, and $R(\theta)$ represents a risk function which depends upon the measure of the data points. The metric $d$ represents the geometry of permissible data perturbations by a hypothetical adversary, and $\e$, called the adversarial budget, represents the power that adversary has to corrupt the data. In many common adversarial learning problems the metric $d$ is some type of Wasserstein metric (see e.g. \cite{MurrayNGT}), which permits adversaries to move many data points smaller distances. These adversaries are in one sense much weaker than the adversary supposed in studying the breakdown point: they typically cannot move points very far. But on the other hand they may move many points short distances, which makes these adversaries in \textcolor{\changed}{another} sense stronger than the one associated with the breakdown point. Utilizing this type of adversary has been incredibly effective in improving generalization for statistical algorithms in the context of deep learning \cite{goodfellow2015}, and has sparked significant algorithmic and theoretical work \cite{bungert2021geometry,MurrayNGT,madry2019deep,moosavi2019robustness,pydi2019adversarial}.	
%	@article{MurrayNGT,
%Author = {Nicol\'as {Garc\'ia Trillos} and Ryan Murray},
%Title = {Adversarial Classification: Necessary conditions and geometric flows},
%Year = {2020},
%Eprint = {arXiv:2011.10797},
%eprinttype = {arxiv},
%journal = {arXiv:2011.10797},
%}

%@inproceedings{goodfellow2015,
%title	= {Explaining and Harnessing Adversarial Examples},
%author	= {Ian Goodfellow and Jonathon Shlens and Christian Szegedy},
%year	= {2015},
%URL	= {http://arxiv.org/abs/1412.6572},
%booktitle	= {International Conference on Learning Representations}
%}
	
	One can view the isometric robustness in Proposition \ref{prop:iso-robust} as following in the spirit of DRO problems, in that we are imagining an adversary that is given a limited adversarial budget expressed in terms of how much they can skew distances between points. Such adversaries are neither strictly weaker or stronger than the one imagined in studying the breakdown point, but are probably closer in practice to the robustness studied in contemporary DRO and adversarial learning problems \textcolor{\changed}{based upon Wasserstein distances}. \textcolor{\changed}{It is natural to consider whether classical depths, such as the Tukey depth, are robust in the sense of Proposition \ref{prop:iso-robust}. The following example shows that this is not the case.}

	\begin{example}
	  Suppose that $\rho$ is given by a uniform distribution on two balls in $\R^2$ of radius $1$ at the points $(0,\pm K)$. At the origin it is straightforward to compute that the Tukey depth will be $1/2$. However, consider a mapping $\Phi$ which leaves the two balls invariant, but moves the origin to $(1,0)$. Such a mapping can be constructed so that $|D\Phi - I| \leq \frac{3}{K}$. However, the Tukey depth of the new point will be $0$. This demonstrates that the Tukey depth is not necessarily isometrically robust.
	\end{example}

	The previous example clearly shows that the Tukey depth is \emph{not} robust with respect to approximate isometries. This should not be surprising, as the Tukey depth is really built upon definitions from convex geometry, \textcolor{\changed}{as opposed to metric geometry}.

{\color{\changed}

  \subsection{Summary of properties}

  Thus far we have proved quite a few different properties of the eikonal depth. Along the way we have also provided discussion and comparison with the halfspace/Tukey depth. For convenience, here we provide a brief summary of these properties.

  \begin{itemize}
    \item \textbf{Scaling:} The eikonal depth is invariant under rigid transformations (Proposition \ref{prop:rigid}), and is weakly scale invariant when $\phi(s) = s^\alpha$ (Proposition \ref{prop:scale-invariant}). It is not invariant under general linear transformations. 
    \item \textbf{Bounds:} The eikonal depth does not admit direct upper or lower bounds of their maxima (Proposition \ref{prop:median-UB} and Example \ref{ex:median-LB}). The example for large maximal values required densities which are unbounded from above. The example for small maximal values required that the density have large gradient. It does admit a convenient comparison principle, which in some cases can aid in bounding the depth (Proposition \ref{prop:comparison}).
    \item \textbf{Multi-modality:} The eikonal depth has local maxima near prominent modes of the density (Proposition \ref{prop:well-separated}. Its level sets are not necessarily convex.
      \item \textbf{Robustness:} The eikonal depth uniquely determines the underlying density (Proposition \ref{prop:depth-char-dist}). It admits a quantifiable stability in terms of the underlying density (Proposition \ref{prop:eikonalcontinuity}. It also  possess a robustness to nearly isometric distortions (Proposition \ref{prop:iso-robust}), which other classical depths do not enjoy. The eikonal depth is not robust in the sense of breakdown point (Proposition \ref{prop:no-breakdown}).
	\item \textbf{One-dimensional equivalence:} The eikonal depth is equivalent to the quantile depth in one-dimension (Section \ref{sec:onedimensional}).
  \end{itemize}

\subsection{What properties should a statistical depth have?}
\label{sec:otherdefs}

In the one-dimensional setting, there are natural ways to define center-outward orderings of data, as described via medians and quantile depths in Section \ref{sec:onedimensional}. However, in higher dimensions there is no single canonical center-outward ordering associated with data points. This has led to myriad definitions of depths in higher dimension, as discussed in Section \ref{sec:literature}. This multitude of definitions naturally leads to the question of what it even means to be a statistical depth.

Several classical works have attempted to address this question. 
	%In contrast to one-dimensional data, data in more than one dimensions lack a clear sense of order. One of the big motivations for the introduction of depth functions is the need to define a concept of centrality and a corresponding order. This is the role that medians play in the one dimensional example presented in Section \ref{sec:onedimensional}.
A set of desirable properties for a statistical depth satisfied by many commonly used statistical depths, such as the Mahalanobis depth, the Tukey depth, and the simplicial depth is given by Liu-Zuo-Serfling \cite{zuo2000general}. Serfling and Zuo consider that the defining properties of statistical depth are affine invariance, maximality at center, monotonicity with respect to central point, and vanishing at infinity. In their definition there exists a unique central point with maximal depth, the center, and the depth is decreasing along any ray emanating from that point. These characteristics define a center-outward ordering using the nested level sets of the depth function. Indeed, some works treat their definition as the de-facto definition of what it means to be a statistical depth.
	
As Serfling and Zuo point out \cite{zuo2000general}, when defining a statistical depth one needs to make a choice between having a center-outward ordering and the ability to capture multimodality. This is a direct consequence of the monotonicity property, which precludes the existence of several central points. Liu proposed a model of depth that prioritizes multimodality in \cite{liu1999multivariate}. Freiman and Meloche \cite{fraiman1999multivariate} proposed a depth in this same direction, the likelihood depth, that is based on estimations of multivariate kernel densities. More recently, Lok and Lee \cite{lok2011new} have proposed a depth focused on multimodality that is based on quantiles of interpoint distances.
	
The ability to deal with multimodality in the notion of a depth is a necessity for some applications: for instance if the data is well-clustered and the ordering associated with a depth is meant to preserve those clusters \cite{liu1999multivariate}. Even more generally, most classical depth functions produce nested orderings of convex sets, which may not be desirable for distributions with non-convex support (see e.g. the discussion in \cite{chernozhukov2017monge}).

The depth that we have constructed has properties which are distinct from many classical depth functions. Indeed, it fails to satisfy many of the points in the definition given in \cite{zuo2000general}. However, it flexibly captures multi-modal and clustered data, still satisfies natural notions of stability and robustness, is easily computable, and is easy to interpret. Even in the Euclidean setting, we posit that this depth provides a competitive alternative to many of the classical notions of depth. We shall also see, in subsequent sections, that this depth extends in very natural ways to non-Euclidean settings, further highlighting its versatility and potential applicability.

%Depths that follow Liu-Zuo-Serfling's list tend to produce level sets that are convex or near convex, which might not be suitable for distributions with non-convex support. Depending on the probability, requiring that the level sets are connected might not even be desirable, for instance when the distribution is a mixture of two Gaussians (see Figure \ref{fig:figure1.2}).
	
	%  \textbf{We might be interested in level sets that are not even connected, as is the case for the Gaussian mixture.} \textbf{Cluster analysis, classification. How does this fit in it?}
}

\section{Extension to non-Euclidean settings}
\label{sec:extensionNonEucli}

In settings where data is non-Euclidean, the question of how to appropriately define depths and medians becomes even less clear. Although there have been some attempts to generalize the halfspace depth to this setting (see e.g. \cite{carrizosa1996characterization,small1997multidimensional}), in general it is not clear how to best approach this problem. We shall see in this section that the definition of the eikonal depth extends very naturally to a wide class of metric settings, while still remaining interpretable and computable.

To begin, we recall our original definition of the eikonal depth:

\[
D_{eik}(x,F):=\inf_{\gamma \in \mathcal{U}_x} \int_{\gamma} \rho ds =\inf_{\gamma \in \mathcal{U}_x} \int_{0}^{\infty}\textcolor{\moreedits}{ \gamma (t) |\dot {\gamma} (t)|} dt,
\]

This definition, as it turns out, only requires a few basic ingredients, which are present in most metric settings. In particular, this definition extends very naturally as long we can have a way of defining the following:
\begin{enumerate}[(1)]
  \item A notion continuous curves, and speed of travel along those curves
  \item A consistent notion of probability density along those curves
	\item An appropriate boundary condition, namely that our curves approach ``spatial infinity'' as $t \to \infty$.
\end{enumerate}

%
%
%In the previous section, we described in detail basic properties of the Eikonal depth in Euclidean space. In this section we describe some simple ways that the theory can be modified to extend to non-Euclidean settings.
%
%In particular, we notice that the definition of our depth
%\[
%D_{eik}(x,F):=\inf_{u \in \mathcal{U}_x} \int_{0}^{\infty} \rho (x(t)) |u| dt =\inf_{u \in \mathcal{U}} \int_{0}^{\infty} \rho (x(t)) |\dot{x}(t)| dt,
%\]
%requires a few crucial ingredients:
%\begin{enumerate}[(1)]
%	\item We need a notion of curves, and a way to normalize their speed.
%	\item We need a means of measuring the density of our distribution along those curves.
%	\item We need an appropriate boundary condition, namely that our curves approach ``spatial infinity'' as $t \to \infty$.
%\end{enumerate}
%

Each of these requirements are immediately fulfilled in many non-Euclidean settings. In particular, we now give informal examples of different settings where this definition may be directly extended. In doing so we focus on conceptual ideas, at the cost of perhaps some precision and rigor.

\begin{example}[Manifolds with boundary]
  Suppose that $\mathcal{M}$ is a $k$-dimensional Riemannian manifold with non-empty boundary. On such a manifold, by using our Riemannian metric we can define \textcolor{black}{a notion of curves, namely a mapping $\gamma:[0,\infty) \to \mathcal{M}$ which is differentiable so that $\dot \gamma(t) \in T\mathcal{M}$. We then suppose that we have a continuous density function $\rho: \mathcal{M} \to [0,\infty)$. Finally, we let the boundary of our manifold define a the ``boundary condition'', namely we require that $\gamma(t) \to \partial \mathcal{M}$. Let us define $\mathcal{U}_x$ to be the set of such curves which satisfy this boundary condition, and begin at $x \in \mathcal{M}$ at time $0$. Under such assumptions, we can define a depth function using
	\[
	D_{eik}(x,\rho):=\inf_{\gamma \in \mathcal{U}_x} \int_{0}^{\infty} \rho (\gamma(t)) |\dot \gamma(t)| dt 
	\] }
	We notice that such a definition is actually identical to the Euclidean one, except that we have had to modify the class $\mathcal{U}_x$, \textcolor{\moreedits}{to encode the geometry of the manifold through the boundary condition}. We also notice that this framework could be translated into computing the distance of the point $x$ to the boundary $\partial \mathcal{M}$ under a new metric $\tilde g = \rho^{k/2} g$, where $g$ is the original metric for $\mathcal{M}$.

\end{example}

\begin{example}[Unions of Riemannian manifolds]
  We consider a finite collection of Riemannian manifolds $\mathcal{M}_i$ with boundary, of possibly unequal dimension. We suppose that these manifolds admit some points of intersection: a canonical example would be when these manifolds are smooth surfaces embedded in Euclidean space, with non-trivial intersection. We then call a \textcolor{\moreedits}{mapping $x:[0,\infty) \to \cup_{i=1}^p \mathcal{M}_i$ a curve} if there exists a partition of $[0,\infty)$ into closed intervals $[a_j,a_{j+1}]$, and so that for each sub-interval $x$ \textcolor{\moreedits}{is a curve of on one of the manifolds $\mathcal{M}_i$}. Again, we can define the boundary condition using the boundary of the manifolds, in this case we let it be $\cup_{i=1}^p \partial \mathcal{M}_i \backslash \cup_{i=1}^p \mathcal{M}_i$. We then use exactly the same definition for the eikonal depth.
\end{example}

\begin{example}[Geodesic metric spaces]
  {\color{\changed}
  Let us consider a geodesic metric space, which heuristically allows us to define distances between points using the length of continuous paths connecting those points. More specifically, we may define the length of a curve $\gamma:\R \to X$ between two times $t_1,t_2$ via
    \[
      \ell(\gamma,t_1,t_2) := \sup_{(s_i)_{i=1}^{k+1} \in \mathcal{T}_{[t_1,t_2]}} \sum_{i=1}^k d(\gamma(s_i),\gamma(s_{i+1})).
  \]
 Here $\mathcal{T}_{[a,b]}$ is the set of all finite partitions of $[a,b]$, and the $t_i$ are the boundaries of those partitions.
 A geodesic metric space is simply a space so that, if $Q_{xy}$ is the set of all continuous paths starting at $x \in X$ at time $0$ and ending at $y \in Y$ at time $1$, then $\min_{\gamma \in Q_{xy}} \ell(\gamma,0,1) = d(x,y)$.

    Geodesic metric spaces generalize Riemannian manifolds in that they do not require local Euclidean structure for length. Classical examples include Banach, sub-Riemannian, and Finsler spaces. One introductory text on these types of spaces is \cite{burago2001course}. This notion, which is very general, does require the existence of continuous curves connecting points, and hence precludes discrete metric structure: we address this direction in our next example.

    We choose to simply let $\mathcal{O} \subset X$ denote our ``boundary'': other settings could also be considered.  {We define $\mathcal{U}_x$ to be the set of continuous functions  $\gamma:[0,T] \to X$ such that $\lim_{t \to T} d(\gamma(t),\mathcal{O}) = 0$. 

  Given such a metric space, and an associated probability measure on $X$, associated with a continuous density $\rho$, we then define
  \[
    D_{eik}(x,\rho) = \inf_{\mathcal{U}_x} \inf_{(t_i)_{i=1}^{K+1} \in \mathcal{T}_{[0,T]}} \sum_{i=1}^K \max_{s \in [t_i,t_{i+1}]} \rho(\gamma(s)) \ell(\gamma,t_i,t_{i+1}).
  \] }
  This essentially corresponds to an upper Riemann approximation of the integral used previously. Such a definition requires very little on the underlying spaces, and provides an extremely general framework in which we can sensibly extend the definition of the eikonal depth.
}

\end{example}

\begin{example}[Graphs]
  Consider a weighted graph $(X,E,W)$, where $\{x_i\}_{i=1}^N$ are the vertices, $E \subset \{1,\dots,N\}^2$ and $W:E \to \R^+$. If the graph is connected, the natural definition of a metric on the graph is given by
  \[
    d(x_1,x_2) = \inf_{ p \in \mathcal{P}_{x_1,x_2}} \sum_j w_{p(j),p(j+1)}, 
  \]
  where $\mathcal{P}_{x_1,x_2}$ are the set of admissible paths connecting $x_1,x_2$, namely finite lists of elements $(p(j))_{j=1}^k$ so that $(p(j),p(j+1)) \in E$ for all $j$, and so that $p(1) = x_1$ and $p(k) = x_2$ (abusing notation here slightly).

  We now suppose that each vertex has an associated density, expressed by a function $\rho: X \to \R^+$. As the path length is edge weighted and the density is vertex weighted, one has to make a choice in defining density-weighted path lengths. One such choice could be to define
  \[
    d_\rho(x_1,x_2) = \inf_{p \in \mathcal{P}_{x_1,x_2}} \sum_j w_{p(j),p(j+1)}\frac{(\rho(p(j)) + \rho(p(j+1)))}{2}.
  \]
  With this choice any path which passes through $x_i$ will be charged $\rho(x_i)$ times the average of the weights of the incoming and outgoing edges, which serve as analogs for the path velocities. As in the case of Riemannian metrics, we can readily see that one can view this formula as simply solving the shortest path problem with new edge weights.

  Suppose that we identify some number of vertices $\mathcal{O}$ as the ``boundary'' of our graph. Then we can readily define
  \[
    D_{eik}(x) = \inf_{y \in \mathcal{O}} d_{\rho}(x,y) =  \inf_{y \in \mathcal{O}}\inf_{p \in \mathcal{P}_{x,y}} \sum_j w_{p(j),p(j+1)}\frac{(\rho(p(j)) + \rho(p(j+1)))}{2}.
  \]

%  In some settings the weights and densities are readily prescribed by the problem setting. In others they need to be inferred from the data. In Section \ref{sec:graph_setting} we describe a natural way to define weights and densities which works naturally for empirical distributions, and which ought to be asymptotically consistent with underlying population level problems.

%  For a graph $G= \{x_i\}$ embedded in $\mathbb{R}^d$ with edges $E=\{e_{ij}\}$ and a weight function $w:E \rightarrow \mathbb{R}$, that depends on the density $\rho: G \rightarrow \mathbb{R}$, we can define the eikonal depth at the point $x_i \in G$ as the minimization of
%	\begin{equation} 
%	d_{Eik}(x_i,\rho)=	\sum_{P_{x_i}} w_{ij} ,
%	\end{equation}
%	where $w_{ij}$ is the weight associated to the edge connecting the vertices $x_i$ and $x_j$ and $P_{x_i}$ is the set of paths starting at $x_i$ and ending at the boundary. Given the interpretation of the eikonal depth as the minimum time required to travel from a point to the boundary, we can for example define $w$ as $w_{ij}= \frac{ \|x_i-x_j \|}{v_{ij}} $ where $v_{ij}$ is the velocity of travel between $x_i$ and $x_j$. We could take from example $v_{ij}=\frac{1}{\rho(x_i)}$, $v_{ij}=\frac{1}{\rho(x_j)}$ or some interpolation of these two. A more elaborate choice for the weights can be constructed, as we do in Section \ref{sec:graph_setting}.
%
\end{example}

These examples make it clear that this framework is extremely flexible. Indeed, the final framework even applies directly to discrete data clouds which are embedded as graphs, of which we give several examples in Figure \ref{fig:manifolds}. The computational methods used for solving eikonal equations in Euclidean settings have immediate counterparts in the graph settings, which we describe in Section \ref{sec:graph_setting}. As long as our graph and densities are constructed using an asymptotically consistent approximation, then the graph setting actually provides a very convenient way to approximate depths using empirical samples coming from Euclidean or manifold settings. \textcolor{\changed}{Some care must be taken in constructing these graphs, but recent work (see, for example, \cite{trillos2016continuum,trillos2018error} and the references therein) provides many consistent choices. The implementation is straightforward and indeed} this is actually how we approximated our depth functions in Figures \ref{fig:manifolds} and \ref{fig:MNIST} without needing to construct meshes.

%However, there are several inherent difficulties in applying these methods to graphs that must be mentioned. First, we note that the density and metric chosen on a graph will strongly affect the eikonal depth. In particular, when using graphs to construct depths on high-dimensional empirical measures great care must be taken in order to obtain consistency. This has been an area of significant research, but it is still often a challenging task.

\textcolor{\changed}{One significant complication is the need to specify} a ``boundary''. In the Euclidean setting, the boundary at infinity is very natural. Similarly, when one considers a manifold with boundary then the same is explicit. On a general metric space or graph it is not obvious how to select an appropriate boundary set. In the context of manifold learning\textcolor{\changed}{, i.e. when discrete samples are drawn from a manifold with boundary,} there are certain established methods for identifying boundary points on empirical measures. However, even with these inherent challenges, the generality of this depth to various settings can be seen as a significant strength.

\section{Computational approaches}

In the previous sections, we have identified the solutions to the optimization problem \eqref{eq:energyEikonal} as the viscosity solutions of the partial differential equation \eqref{eq:eikonal}. In turn, we identified a number of desirable properties which are enjoyed by those solutions. However, we have not, up to this stage, addressed, in any way, how one can approximate solutions to these equations: indeed as the equation is non-linear it is genereally not possible to find closed form solutions. Furthermore, in the previous sections we have always assumed that we have knowledge of $\rho$, whereas in practice one often only has access to empirical quantities. Fortunately, various numerical methods have been previously developed for solving eikonal-type equations both on grids and on more general graphs, which allow us to efficiently approximate the eikonal depth in a variety of settings. We do not attempt to give a complete accounting of these numerical methods, instead opting to describe high level ideas and algorithms, along with examples where we have implemented these schemes.

\subsection{Numerical methods, monotone schemes, and fast marching methods}
\label{sec:numerics}

In constructing a numerical method for approximating solutions to \eqref{eq:eikonal}, care needs to be taken that the scheme will correctly select the (unique) viscosity solution of the first-order partial differential equation, as opposed to one of the (infinitely many) weak solutions to the equation. For concreteness, let us consider the following example:

\begin{example}
	Consider $\rho(x) \equiv 1$ on the domain $[0,1]$, namely a uniform distribution. The correct viscosity solution to \eqref{eq:eikonal} will be given by $u(x) = \min(x,1-x)$. Suppose that our goal is to approximate $u$ on the points $x_i = i/N$, namely to solve for values of $u_i \approx u(x_i)$ so that a discrete approximation of \eqref{eq:eikonal} is satisfied. One natural numerical scheme would be to use a finite difference scheme based upon Taylor approximations, namely $N \cdot |u_i - u_{i-1}| =  \rho(x_i) = 1$. However, such a scheme fails to necessarily select the correct solution: for example letting $u_i = 1/N$ for odd $i$ and $0$ for even $i$ (when $N$ is even) would solve this discrete equation but in no way resembles the viscosity solution.
\end{example}

However, there are a variety of methods for approximating derivatives \cite{barles1991convergence} which are known to be consistent with viscosity solutions. In particular, one can use \emph{monotone methods} to construct such consistent derivative approximations. For example, in the notation of the previous example, in one dimension one can approximate
\begin{equation}\label{eqn:monotone-scheme}
u'(x_i) \approx D_m u (x_i) :=  \begin{cases} N \cdot |u_i - u_{i-1}| &\text{ if } u_{i-1} \leq u_i, \\ N \cdot |u_{i+1}-u_i| & \text{ else if } u_{i+1} \leq u_i, \\ 0 &\text{ otherwise.} \end{cases}
\end{equation}
This finite difference scheme selects a ``direction'' for approximating the derivative based upon the slope of $u$ itself. \textcolor{\changed}{In particular, $D_m$ only uses neighboring points to compute the derivative if they have smaller value tha $u_i$.} Such a scheme is called monotone because it preserves a type of comparison principle: in the context of the eikonal equation this means that if $u \leq v$ and if $u(x_i) = v(x_i)$ then $|D_m u(x_i)| \geq |D_m v(x_i)|$. This type of comparison result is known to imply that these approximation schemes are consistent with viscosity solutions. \textcolor{\changed}{The approximation \ref{eqn:monotone-scheme} admits a direct extension to grids in higher dimension.}

\textcolor{\changed}{
However, even after selecting a consistent derivative approximation, one still needs to solve a system of non-linear equations: namely one needs $|D_m u| = \rho$. The \emph{fast marching method} \cite{sethian1999level} offers an efficient and consistent way to solve these equations. From a high level, this method tracks a set on which the solution is known (i.e. the ``solved set'', which is initialized using boundary conditions), and then solves for the values of $u$ on the set of points which neighbor the solved solved set (known as the ``considered set'') using an appropriate monotone derivative approximation scheme (e.g. in the spirit of Equation \ref{eqn:monotone-scheme}). The point in the considered set which has the smallest $u$ value is then added to the solved set. This process is repeated until all points have been added to the solved set. In essence, this scheme uses the content of Proposition \ref{prop:DPP}, which is intimately connected to the dynamic programming principle, in order to construct solutions in an efficient manner. The overall computational complexity of such a method is, up to logarithmic factors, $O(N)$, where $N$ is the total number of points in the grid or point cloud.}

\subsection{Difference scheme on point clouds}
\label{sec:graph_setting}
In many settings it may be desirable to construct an approximate solution to the eikonal equation by directly using sampled data points instead of constructing a solution on a computational grid. Such an approach avoids the need to construct higher-dimensional grids and has the potential to be more efficient and data intrinsic.

However, it is necessary to carefully consider the manner in which we approximate our derivatives in the context of using the empirical distribution as our underlying discrete object. \textcolor{\changed}{We will follow the underlying framework popularized in studying total variation and Laplacian regularization in learning problems \cite{garcia2020maximum,slepcev2019analysis}}. We do not attempt to rigorously justify the numerical method we propose here, instead opting to provide an accessible, heuristic justification.

To begin, we consider a sequence of data points $\{X_i\}_{i=1}^n \subset \R^d$ which are IID samples from a distribution with density $\rho$. We then construct a weighted graph on this set of points by defining graph weights via the relation
\begin{displaymath}
w_{ij} = \sigma n^{-1}h^{-d} \eta\left( \frac{|X_i-X_j|}{h} \right).
\end{displaymath}
Here $\eta:\R \to \R$ is a kernel describing how related we consider points based upon their distance from one another. Common examples of such a kernel include Gaussians or indicator functions. The parameter $h$ is a kernel bandwidth, which determines how we are scaling the similarities between points. From a heuristic level we consider two points to be related if they are within distance $h$ of each other. The parameter $h$ needs to be sufficiently small, but not too small: in practice one ought to select $1 \gg h \gg n^{-1/d}$.  The parameter $\sigma$ is a normalizing constant, which is chosen so that we satisfy the following integral relation
\begin{displaymath}
\frac{\sigma}{2} \int_{\R^d} \eta(|x|) x_1^2 \,dx = 1.
\end{displaymath}

We may then propose \textcolor{\changed}{using the following equation as our differencing scheme, which is based on the one introduced in \cite{desquesnes2013eikonal}}:
\begin{displaymath}
\rho_i^2 = \sum_{j \sim i} h^{-2} w_{ij} (0,u_i-u_j)_+^2.
\label{eq:differenceScheme}
\end{displaymath}

We now heuristically explain why this is a consistent approximator. If we hold $h$ fixed and then take $n \to \infty$, the previous sum, which is divided by $n$ in the weights, is well-approximated by an integral, and we obtain the relation
\begin{displaymath}
\rho(x)^2 = \int_{\R^d} \frac{\eta(|x-y|/h)}{h^{d+2}}(0,u(x)-u(y))_+^2\,dy.
\end{displaymath}
Then using a Taylor approximation and neglecting the quadratic terms, we obtain
\begin{displaymath}
\rho(x)^2 = \int_{\R^d} \frac{\eta(|x-y|/h)}{h^d} (0,\nabla u(x) \cdot (x-y) h^{-1} )_+^2 \,dy.
\end{displaymath}
Then using a change of variables $z = (x-y)/h$ we obtain
\begin{displaymath}
\rho(x)^2 = \int_{\R^d} \eta(|z|) (0,\nabla u(x)\cdot z)_+^2 = \frac{|\nabla u(x)|^2}{2} \int_{\R^d} \eta(|z|) z_1^2 \,dz = |\nabla u(x)|^2.
\end{displaymath}
The choice of $\sigma$ then immediately gives the desired equation.

Two steps in the previous derivation were informal at this stage. First, we used a Taylor approximation to convert a difference to a derivative: this step is justified in the limit $h \to 0$. The second step was where we approximated the sum with the integral, with fixed $h$. This is justified as long as the number of points in a ball of radius $h$ goes to infinity as $n \to \infty$: this motives the requirement that $h \gg n^{-1/d}$. All of these steps have been rigorously justified within the context of graph Laplacians, and we expect that similar rigorous justification would hold here. \textcolor{\changed}{Analyses of this type have been carried out in \cite{trillos2018error}, for the Laplace-Beltrami operator, and in \cite{fadili2021limits}, for a time-dependent eikonal equation.}

%\textcolor{teal}{Connections to other applications of the eikonal equation on graphs (neighbor classification and cite Jeff).}

\subsection{Numerical illustrations}

In this section, for the sake of illustration, we give a few more involved numerical examples, along with some details. These examples are meant to demonstrate the potential of this type of depth, and its applicability in a wide range of situations. While these examples are clearly artificial, we view them as promising, and as warranting further investigation into the application of this statistical depth.

{\color{\changed} \textbf{Example 1: Gaussian mixtures in $\R^2$.} First, in Figures \ref{fig:GaussiansAlpha} and \ref{fig:GaussianMixture}, we have demonstrated level curves of the eikonal depths and its variants for different Gaussian mixtures. These examples were computed using a grid based function representation with resolution 1/128, using analytic density values and the standard fast marching method. Such methods enjoy excellent convergence properties, and in particular converge toward the analytical solution of the continuum differential equation with an error proportional to the grid spacing. These examples help to provide intuition and are easy to visualize.

\begin{figure}
	\centering
	\begin{subfigure}[b]{0.45\textwidth}
		\centering
		\includegraphics[width=\textwidth]{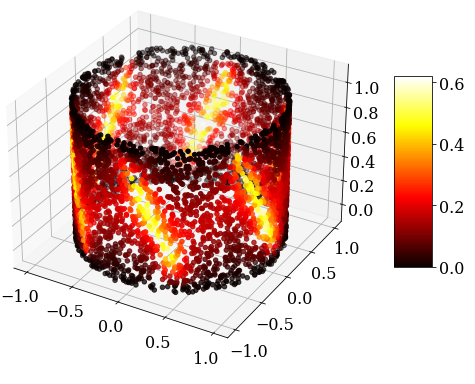}
		\caption{}
	\end{subfigure}
	
	\begin{subfigure}[b]{0.45\textwidth}
		\centering
		\includegraphics[width=\textwidth]{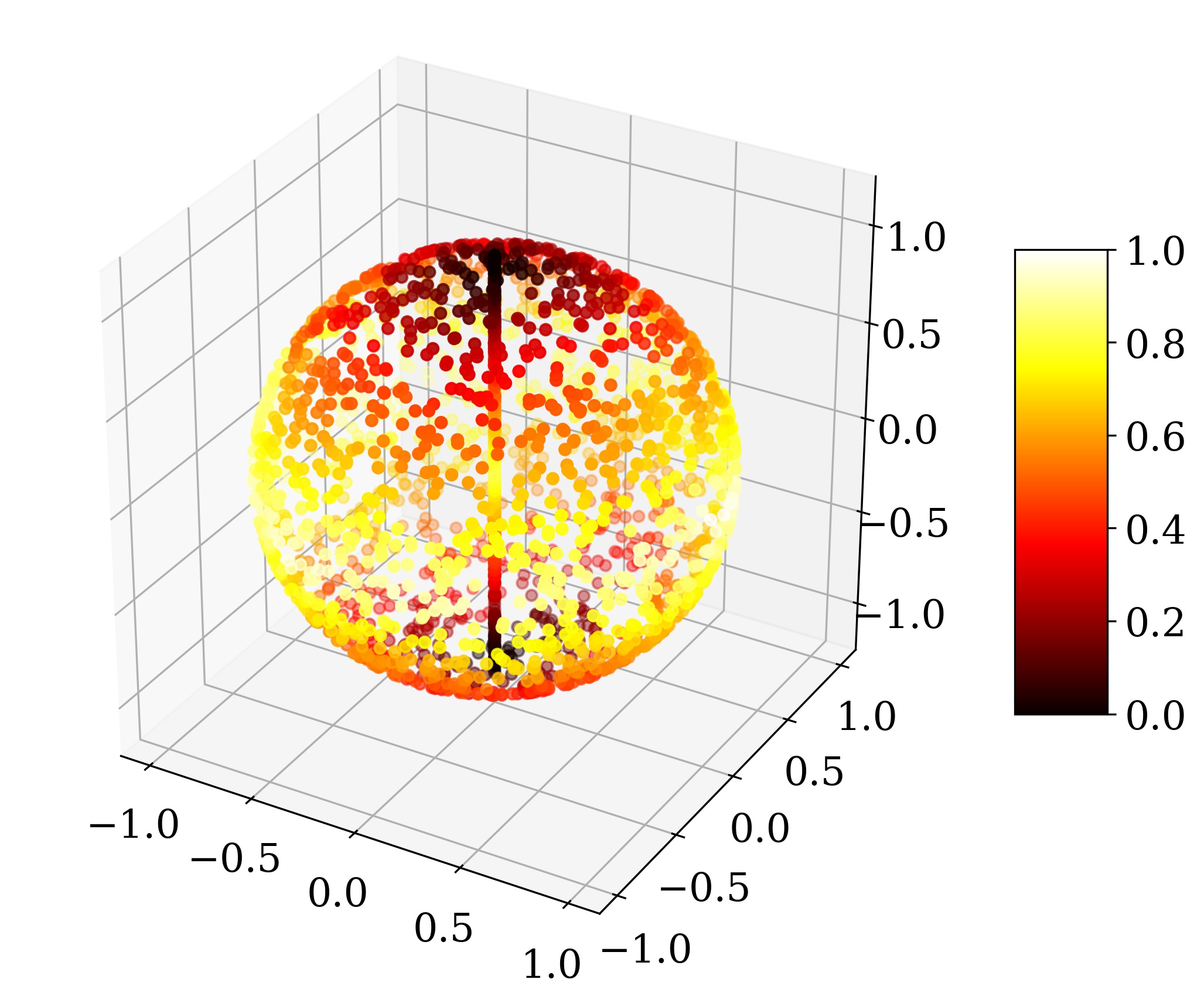}
		\caption{}
		\label{fig:depth_sphereline}
	\end{subfigure}
	\begin{subfigure}[b]{0.45\textwidth}
		\centering
		\includegraphics[width=\textwidth]{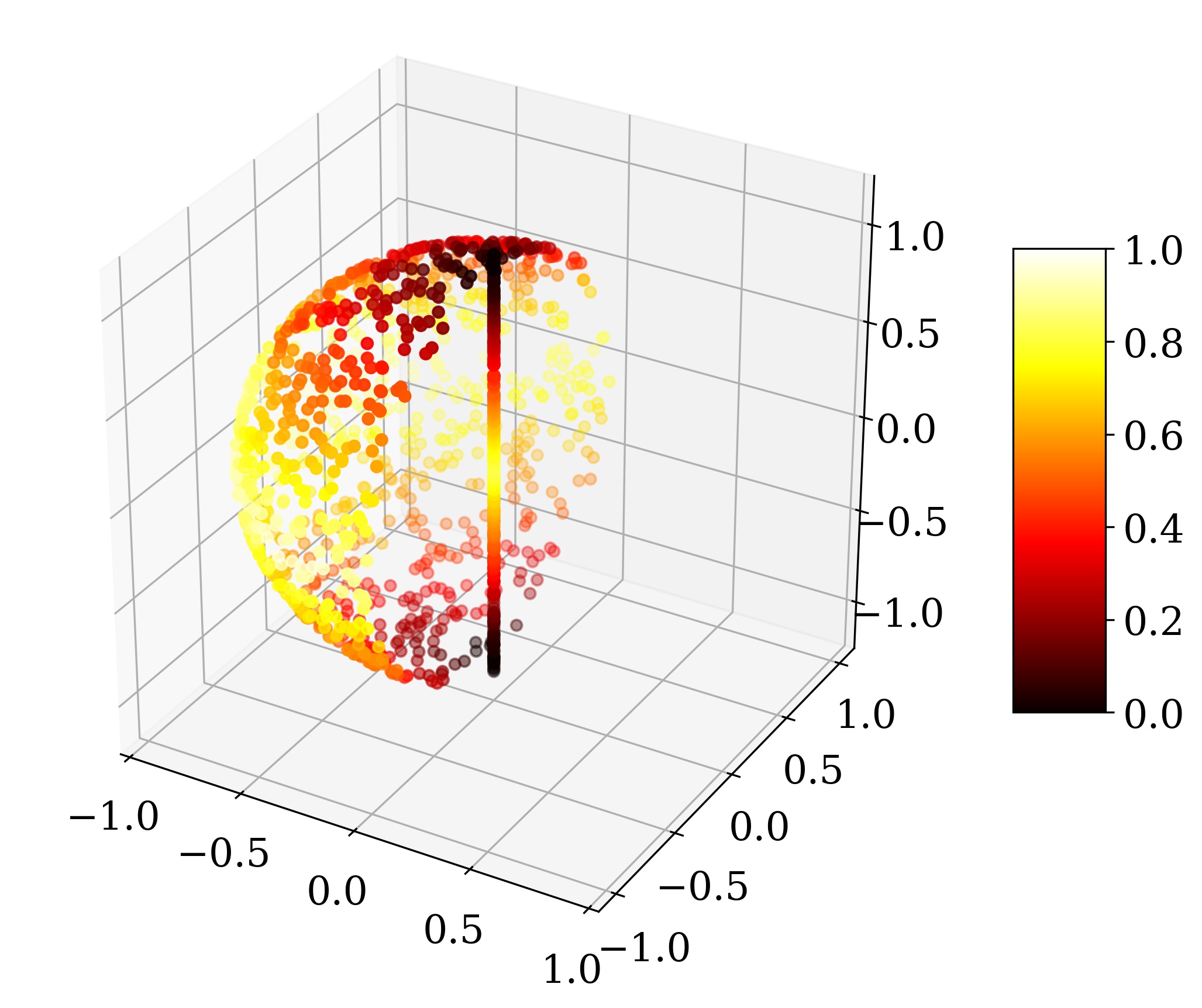}
		\caption{}
		\label{fig:depth_sphereline_cut}
	\end{subfigure}
	\caption{The figure shows examples of the eikonal depth of some densities on manifolds. Figure \ref{fig:depth_sphereline_cut} shows only the points in \ref{fig:depth_sphereline} with $x \leq 0$.}
	\label{fig:manifolds}
\end{figure}

\textbf{Example 2: Densities on manifolds.} In Figure \ref{fig:manifolds} we demonstrate the application of this method to a density living on lower-dimensional structure in three dimensions. In the first, we consider a non-uniform density on a cylinder, with density $\rho(\theta,z)=1-0.9 \sin^2 (\frac{\pi}{2} + 0.5 \pi z + 3 \theta)$, where the points on the cylinder are parametrized as $(\cos \theta,\sin \theta, z)$ for $\theta \in [0, 2 \pi], z \in [-\epsilon,1+\epsilon	]$. We then drew $65^2=4225$ samples in the domain, and constructed a geometric graph using the scaling law described in Section \ref{sec:graph_setting} and the analytical formula for the density. By doing so, our numerical method does not require a priori knowledge of the underlying manifold. We then solved the Eikonal equation using the fast marching method on that geometric graph (see e.g. \cite{sethian2000fast,fadili2021limits}). We imposed a zero boundary condition on the boundary of the manifold (namely the ``edges'' of the cylinder). In practice one could seek to automatically detect the boundary points of the manifold (see e.g. \cite{calder2021boundary}), but for simplicity we chose to specify it directly. 

In the second part of Figure \ref{fig:manifolds} we considered the union of two manifolds, namely the unit sphere along with the line $z = (0,0,t)$ for $t \in [-1-\epsilon,1+\epsilon]$, where the densities on each manifold are taken to be uniform and the two manifolds have equal probability weight. This is an example of a metric measure space, and is a situation where the dimension of the different components is not constant. Indeed, at the point where the line and the sphere intersect the structure of the union of the two manifolds ceases to be Riemannian. Such unions of manifolds have been considered in the context of spectral methods in \cite{trillos2021large}. Again here we take random samples from the two manifolds and then construct geometric graphs using the same scaling laws. In this case we imposed the boundary condition on the endpoints of the line, which can be seen as the "boundary" of the metric measure space.

%	\begin{subfigure}[b]{0.49\textwidth}
%		\centering
%		\includegraphics[width=\textwidth]{density_Gaussianmixture}
%		\caption{}
%		\label{fig:densityGaussians}
%	\end{subfigure}
%	\hfill
%	\begin{subfigure}[b]{0.49\textwidth}
%		\centering
%		\includegraphics[width=\textwidth]{depthcontours_GaussiansAlpha05}
%		\caption{}
%		\label{fig:contoursGaussians05}
%	\end{subfigure}
%	\begin{subfigure}[b]{0.49\textwidth}
%		\centering
%		\includegraphics[width=\textwidth]{depthcontours_GaussiansAlpha1}
%		\caption{}
%		\label{fig:contoursGaussians1}
%	\end{subfigure}
%	\hfill
%	\begin{subfigure}[b]{0.49\textwidth}
%		\centering
%		\includegraphics[width=\textwidth]{depthcontours_GaussiansAlpha2}
%		\caption{}
%		\label{fig:contoursGaussians2}
%	\end{subfigure}

\textbf{Example 3: MNIST} We also applied the algorithm to compute statistical depths on a subset of the MNIST data set. This data set, which is a common benchmark in machine learning, consists of 70k, $28 \times 28$ grayscale images of handwritten numbers. We focused on computing the depth corresponding to the set of handwritten $4$'s. In doing so, we constructed a weighted neighborhood graph by first constructing a $k$ nearest neighbors graph $k=10$, and then assigning weights proportional to
\begin{equation}
	w_{ij}= \exp \left(\frac{- 4 |x_i - x_j|^2}{d_k(x_i)^2} \right),
\end{equation}
where $d_K(x_i)$ is the distance from $x_i$ to its $k$th-nearest neighbor. We consider that the probabily density is uniformly distributed on all the points in the MNIST set. The depth was then calculated by using the difference scheme in \eqref{eq:differenceScheme}. The boundary of the set of handwritten $4$'s was taken to be the set of all other digits within the data set. This then allows us to assign a depth value to every element of the set of $4$'s. In order to visualize, we display in Figure \ref{fig:MNIST} a random sample of the digits plotted according to their respective depth values. This allows us to identify outlying and inlying digits, and their relative frequencies. This example demonstrates the applicability of these depths to both high-dimensional and graph-based data.  We remark here that similar results have been independently and concurrently observed in \cite{calder2021boundary}, which was focused on boundary estimation and gave a path distance as an example of their method.

\begin{figure}
	\centering
	\includegraphics[width=0.8\textwidth]{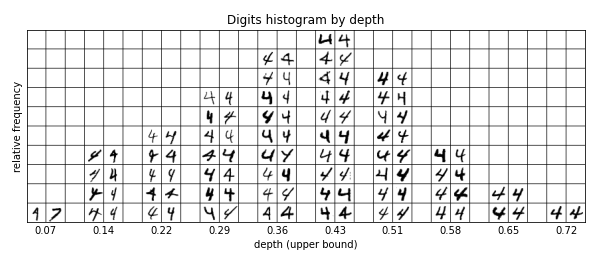}
	\caption{\textcolor{\changed}{The figure shows some representative points belonging to different level sets of the eikonal depth on the elements in MNIST labeled as 4's. The number of representatives in each depth range indicates the proportion of 4's in each range. We can observe that points lying deep in the distribution seem to be neat examples of a hand-written '4'.}}
	\label{fig:MNIST}
\end{figure} 

\textbf{Implementation details:} All code was implemented in Python, and ran in less than a minute on a modest desktop computer. The graph construction for MNIST utilized the Python package ``GraphLearning" \cite{calder2020poisson}. We expect that most of the code would run in seconds if implemented in a lower level language. 
}

	\newpage

\appendix
\section{Proofs of Some Results}
\textcolor{\changed}{
	\begin{proposition}
		\label{prop:comparisonbounded}
		Consider the equation 
		\begin{equation}
			|D u (x)| = \rho (x)
		\end{equation}
		where $\rho$ is an integrable function with support equal to $\mathbb{R}^d$. Additionally, let us suppose that $u \rightarrow 0$ as $|x| \rightarrow \infty$. Let $u$ and $v$ be respectively a bounded subsolution and a bounded supersolution of the equation. Then we have that $u \leq v$ in all of $\mathbb{R}^d$.
	\end{proposition}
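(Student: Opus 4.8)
The plan is to prove this comparison principle by the classical \emph{doubling of variables} (penalization) technique from viscosity solution theory, adapted to handle the two features specific to our setting: the unbounded domain $\mathbb{R}^d$, and the fact that the right-hand side $\rho$, while everywhere positive (since $\mathrm{supp}\,\rho=\mathbb{R}^d$), is not bounded below. The first preparatory step is a scaling trick to manufacture strictness: since $u$ is a subsolution of $|Du|=\rho$, for every $\theta\in(0,1)$ the function $\theta u$ is a subsolution of $|Dw|=\theta\rho$ (if $\theta u-\varphi$ has a local max at $x$ then so does $u-\theta^{-1}\varphi$, whence $|D\varphi(x)|\le\theta\rho(x)$), and $\theta u$ is still bounded and still vanishes at infinity. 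It therefore suffices to show $\theta u\le v$ for every $\theta\in(0,1)$ and then let $\theta\to 1^-$.

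Next, fix $\theta$ and suppose toward a contradiction that $M:=\sup_{\mathbb{R}^d}(\theta u-v)>0$. For $\varepsilon>0$ set
\[
\Phi_\varepsilon(x,y):=\theta u(x)-v(y)-\frac{|x-y|^2}{2\varepsilon},\qquad M_\varepsilon:=\sup_{(x,y)\in\mathbb{R}^d\times\mathbb{R}^d}\Phi_\varepsilon(x,y)\ge M.
\]
This is where the hypotheses on behaviour at infinity are used: because $\theta u$ and $v$ are continuous and decay at infinity, $\Phi_\varepsilon(x,y)$ cannot approach a value $\ge M$ as $|x|+|y|\to\infty$ (if $|x-y|$ stays bounded then both $|x|,|y|\to\infty$ and $\theta u(x)-v(y)\to 0$; otherwise the penalty term forces $\Phi_\varepsilon\to-\infty$), so $M_\varepsilon$ is attained at some $(x_\varepsilon,y_\varepsilon)$ in a bounded set. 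The standard penalization lemma then yields $\tfrac{|x_\varepsilon-y_\varepsilon|^2}{2\varepsilon}\to 0$, $M_\varepsilon\to M$, and $\theta u(x_\varepsilon)-v(y_\varepsilon)\to M$ as $\varepsilon\to0$; moreover the $x_\varepsilon,y_\varepsilon$ stay in one fixed compact set (if $x_\varepsilon\to\infty$ along a subsequence then $y_\varepsilon\to\infty$ too, since $|x_\varepsilon-y_\varepsilon|\to0$, forcing $\theta u(x_\varepsilon)-v(y_\varepsilon)\to 0\ne M$), so after passing to a subsequence $x_\varepsilon,y_\varepsilon\to\bar x$ for a common limit $\bar x$.

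Finally, the viscosity inequalities: for fixed $\varepsilon$, the map $x\mapsto\theta u(x)-\big(v(y_\varepsilon)+\tfrac{|x-y_\varepsilon|^2}{2\varepsilon}\big)$ has a local maximum at $x_\varepsilon$, so testing the subsolution property of $\theta u$ with the smooth function $x\mapsto v(y_\varepsilon)+\tfrac{|x-y_\varepsilon|^2}{2\varepsilon}$ gives $|p_\varepsilon|\le\theta\rho(x_\varepsilon)$ with $p_\varepsilon:=(x_\varepsilon-y_\varepsilon)/\varepsilon$; likewise $y\mapsto v(y)-\big(\theta u(x_\varepsilon)-\tfrac{|x_\varepsilon-y|^2}{2\varepsilon}\big)$ has a local minimum at $y_\varepsilon$, so the supersolution property of $v$ gives $|p_\varepsilon|\ge\rho(y_\varepsilon)$. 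Combining these, $\rho(y_\varepsilon)\le\theta\rho(x_\varepsilon)$; letting $\varepsilon\to 0$ and using continuity of $\rho$ with $x_\varepsilon,y_\varepsilon\to\bar x$ gives $(1-\theta)\rho(\bar x)\le 0$, hence $\rho(\bar x)\le 0$, contradicting $\rho>0$. Thus $M\le 0$, i.e. $\theta u\le v$, and sending $\theta\to1$ finishes the proof. I expect the main obstacle to be the compactness argument in the second step: on a bounded domain the maximizer of $\Phi_\varepsilon$ exists automatically, whereas here one must use the decay of the sub- and supersolution at infinity both to know $M_\varepsilon$ is attained and to keep the maximizers from escaping to infinity as $\varepsilon\to0$. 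The scaling trick in the first step is the other essential ingredient — without the factor $\theta$, the final inequality would degenerate to the vacuous $\rho(\bar x)\le\rho(\bar x)$, which is exactly why the bare eikonal comparison needs $\rho>0$. (If $\rho$ is only nonnegative with full support rather than strictly positive, the same scheme goes through after a small translation of $\bar x$ into $\{\rho>0\}$, or a further small perturbation of $v$.)
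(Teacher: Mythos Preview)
Your proof is correct and follows essentially the same strategy as the paper's: both use the scaling $u\mapsto\theta u$ (the paper writes $(1-\epsilon)u$) to manufacture strictness, then apply the standard doubling-of-variables penalization, using the decay at infinity of the sub- and supersolution to secure existence and compactness of the maximizers, and conclude via continuity and positivity of $\rho$. If anything, your handling of the limiting step---extracting a common limit $\bar x$ as the penalization parameter vanishes and then sending $\theta\to 1$---is slightly more carefully spelled out than the paper's.
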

}

\begin{proof}[Proof of Proposition \ref{prop:comparisonbounded}]
	Consider $u,v$ bounded sub- and super- solutions. The comparison principle follows from an argument by contradiction. Let us first note that the function $u_{\epsilon}=(1-\epsilon)$, for a given $\epsilon>0$, is a strict subsolution in the viscosity sense, that is, if $|D u (x)| \leq \rho (x)$, then $|D u_{\epsilon} (x)| + \epsilon \rho(x) \leq \rho(x)$.

	Let us define, for $\alpha>0$, the auxiliary function
	\begin{equation}
	\Phi(x,y)= u_{\epsilon}(x) - v(y) - \frac{\alpha}{2} |x-y|^2
	\end{equation}
	and suppose, contrary to the statement of the comparison principle, that $M:= \sup_{\mathbb{R}^d} (u_{\epsilon}-v) >0$. This implies that $\sup_{\mathbb{R}^d \times \mathbb{R}^d} \Phi(x,y) >0$, as 
	$\sup_{\mathbb{R}^d \times \mathbb{R}^d} \Phi(x,y) \geq \sup_{\mathbb{R}^d} (u_{\epsilon}-v) >0$.
	We can take a sequence $\{(x_k,y_k)\}$ such that  $\limsup_{k \infty} \Phi(x_k,y_k)=M$. Because $\Phi(x_k,y_k)>0$ for $k$ large enough, we have
	\begin{equation}
	\frac{\alpha}{2} |x_k -y_k| = u_{\epsilon}(x_k) - v (y_k) - \Phi(x_k,y_k) \leq u_{\epsilon}(x_k) -v(y_k) .
	\end{equation}
	And taking into account that $u$ is bounded, then $\frac{1}{2} |x_k - y_k|^2 \leq \frac{M}{\alpha}$ for some $M>0$. This implies that $x_k$ and $y_k$ remain close for increasingly large values of $k$. Moreover, increasing the value of the parameter $\alpha$ only reduces the distance between $x_k$ and $y_k$. On the other hand, we have that
	\begin{equation}
	\Phi(x_k,y_k) \leq u_{\epsilon}(x_k)- v(y_k) - \frac{\alpha}{2} |x_k - y_k|^2 \leq u_{\epsilon}(x_k) - v (y_k)
	\end{equation}
	After taking limits this inequality yields
	\begin{equation}
	M=\limsup_{k \rightarrow \infty} \Phi(x_k,y_k) \leq \limsup_{k \rightarrow \infty} \{u_{\epsilon}(x_k) - v (y_k)\}
	\end{equation}
	Note that by the above bound on $|x_k -y_k|$ we can only have two situations, either both $\{x_k\}$ and $\{y_k\}$ remain bounded, or both go to infinity as $k \rightarrow \infty$. As both $u_{\epsilon}(x)$ and $v(x)$ approach zero as $x$ goes to infinity by assumption, we cannot have that both  $\{x_k\}$ and $\{y_k\}$ escape to infinity (this would imply that $M \leq 0$). Given that $\{x_k\}$ and $\{y_k\}$ are bounded for all $k$ we can construct a subsequence that converges to a point $(x_{\alpha}, y_{\alpha})$ at which the supremum of $\Phi(x,y)$ is realized, that is,
	\begin{equation}
	M= \Phi(x_{\alpha},y_{\alpha}) = u_{\epsilon}(x_{\alpha}) - v(y_{\alpha}) - \frac{\alpha}{2} |x_{\alpha} -y_{\alpha}|^2 .
	\end{equation}
	
	This means that $\varphi(x)= v(y_{\alpha}) +\frac{\alpha}{2} |x - y_{\alpha}|^2$ is a smooth function such that $u_{\epsilon}-\varphi$ attains a maximum at $x_{\alpha}$ and that $\psi(y)= u_{\epsilon} (x_{\alpha}) - \frac{\alpha}{2} |x_{\alpha} - y|^2$ is a smooth function such that $v- \psi$ attains a minimum at $y_{\alpha}$. We then have that
	\begin{equation}
	\begin{array}{ccc}
	|D \varphi (x_{\alpha})| - \rho(x_{\alpha}) & \leq & \epsilon \rho(x_{\alpha}) \\
	|D \psi (y_{\alpha})| - \rho (y_{\alpha}) & \geq & 0
	\end{array}
	\end{equation}
	and therefore
	\begin{equation}
	\begin{array}{ccl}
	\epsilon \rho(x_{\alpha}) & \leq & (|D \psi(y_{\alpha})|- \rho(y_{\alpha}))- (|D \varphi (x_\alpha)|- \rho (x_{\alpha})) \\
	& = & (\alpha |x_{\alpha}- y_{\alpha}|^2 - \rho(y_{\alpha}))- (\alpha |x_{\alpha}- y_{\alpha}|^2 -\rho(x_{\alpha})) \\
	& = & \rho(x_{\alpha}) - \rho(y_{\alpha}).
	\end{array}
	\end{equation}
	
	The continuity of $\rho$, the fact that we have $|x_{\alpha}-y_{\alpha}| \leq \frac{M}{\alpha}$ and the positivity of $\epsilon \rho(x_{\alpha})$ yields a contradiction. We the have $u_{\epsilon} \leq v$ on $\mathbb{R}^d$. By construction, we also have that $u \leq u_{\epsilon}$ and then the comparison principle follows.
\end{proof}

{\color{\changed}
\begin{proof}[Proof of Proposition \ref{prop:comparisonunbounded}]
	For $R>1$, let us construct the auxiliary function $$w(x)= (R-1) \Psi \left(\frac{u(x)}{R} \right)$$ where $\Psi$ is a globally bounded smooth function such that $\Psi(y)=y$ for $|y| \leq R$. The function $w$ is a strict subsolution in the sense that if $\psi$ is a smooth function such that $\Psi-\psi$ attains a maximum at $x$ then
	\begin{equation}
	|D\psi(x)|+\frac{\rho(x)}{R} \leq \rho(x)
	\end{equation}
	in the sense of viscosity solutions.
	
	Because now both $w$ and $v$ are bounded, we can use the same proof as in the previous proposition to show that $w \leq v$. It only remains to notice that from the definition of $w$ we have that $u \leq v$ if $|x| \leq R$ and this holds for any value of $R$. We then have that $u \leq v$.
\end{proof}
}

\bibliographystyle{plain}
\bibliography{mybib}

\end{document}